\numberwithin{equation}{section}
\newtheorem{definition}{Definition}[section]
\newtheorem{lemma}[definition]{Lemma}
\newtheorem{theorem}[definition]{Theorem}
\newtheorem{proposition}[definition]{Proposition}
\newtheorem{remark}[definition]{Remark}
\DeclareMathAlphabet\mathbit
\DeclareOldFontCommand{\bi}{\normalfont\bfseries\itshape}{\mathbit}
\newcommand{\be}{\begin{equation}}
\newcommand{\ee}{\end{equation}}
\def\fakebold#1{\relax\ifvmode\leavevmode\fi%
\ifmmode%
\setbox0=\hbox{$#1$}%
\else%
\setbox0=\hbox{#1}%
\fi%
\kern-.02em\copy0 \kern-\wd0%
\kern .04em\copy0 \kern-\wd0%
\kern-.0125em\raise.02em\box0%
}%
\renewcommand{\geq}{\geqslant}
\renewcommand{\leq}{\leqslant}
\newcommand{\mathd}{\mathrm{d}}
\newcommand{\tmem}[1]{{\em #1\/}}
\newcommand{\tmmathbf}[1]{\ensuremath{\boldsymbol{#1}}}
\newcommand{\tmop}[1]{\ensuremath{\operatorname{#1}}}
\newcommand{\tmtextit}[1]{{\itshape{#1}}}
\newcommand{\tmtextrm}[1]{{\rmfamily{#1}}}
\newcommand{\ptl}{\partial}
\definecolor{myred}{RGB}{160,0,0}
\definecolor{mygreen}{RGB}{0,160,0}
\definecolor{myblue}{RGB}{0,0,160}
\numberwithin{equation}{section}
\numberwithin{figure}{section}
\numberwithin{table}{section}
\definecolor{darkgreen}{rgb}{0,0.5,0}
\newcommand{\RED}{} 
\definecolor{manchester}{rgb}{.42,.17,.58}
\newcommand*\link[1]{\hspace*{0em plus 1fill}\makebox{#1}}
\renewcommand{\qedsymbol}{$\blacksquare$}
\title{Vertex Green's functions of a quarter-plane. \protect\\ Links between the functional equation, additive crossing and Lam{\'e} functions}
\author{Rapha\"{e}l C. Assier$^{*}$ and Andrey V. Shanin$^{\dagger}$\\
	\footnotesize{$^{*}$ Department of Mathematics, University of Manchester, Oxford Road, Manchester, {\rm M13 9PL}, UK}\\
	\footnotesize{$^{\dagger}$ Department of Physics (Acoustics Division), Moscow State University, Leninskie Gory, {\rm 119992}, Moscow, Russia}
}
\begin{document}

\maketitle


\begin{abstract}
  In our previous work (Assier \& Shanin, QJMAM, 2019), we gave a new spectral
  formulation in two complex variables associated with the problem of \RED{plane-wave}
  diffraction by a quarter-plane. In particular, we showed that the unknown
  spectral function satisfies a condition of additive crossing about its
  branch set. In this paper, we study a very similar class of spectral
  problem, and show how the additive crossing can be exploited in order to
  express its solution in terms of Lam{\'e} functions. The solution\RED{s} obtained can be thought of as tailored vertex Green's function\RED{s} whose behaviour\RED{s} in the near-field \RED{are} directly related to the eigenvalues of the Laplace--Beltrami operator. This is important since the correct near-field behaviour at the tip of the quarter-plane had so far never been obtained via a \RED{multivariable complex analysis} approach. 

\end{abstract}


\section{Introduction}

\RED{The long-term motivation behind the present work is} the problem of 
\RED{plane-wave} diffraction by 
a quarter-plane (an obstacle having 
zero thickness and the shape of a plane angular sector with opening angle equal to~$\pi /2$). 


A \RED{comprehensive} literature review dedicated to this \RED{important unsolved canonical} problem can be found in \RED{one of} our previous paper\RED{s} \cite{Assier2018a}. One of the ways to tackle this problem, which we will continue to develop in the present work, is to reformulate it as a two-complex-variables functional equation of the Wiener--Hopf type. Various (mostly unsuccessful) attempts (see e.g.\ \cite{Radlow1965})  to solve this functional equation were reviewed in \cite{Assier2018a}. 



\RED{In this same paper, we also showed that one of the two unknown spectral functions of the two-complex-variables Wiener--Hopf problem (2DWH) had to satisfy the property of \textit{additive crossing} about its branch lines (two-complex-variables analog to branch points). It allowed us to derive a new spectral formulation for the plane-wave-incidence quarter-plane problem. }
However, at this stage, it is not clear how the additive crossing property can be used in practice. With the present work, our aim is to bridge this gap and give a concrete example of usage of the 
additive crossing property for a simpler but related diffraction problem. \RED{In doing so, we also aspire to develop a sophisticated mathematical machinery on which our future investigations can be built.}

\RED{As mentioned previously,} in \cite{Assier2018a} we considered the problem of diffraction of a plane wave 
by a quarter-plane. In the present paper we have in mind a slightly different wave propagation problem.
\RED{We keep the same} scatterer (a Dirichlet quarter-plane), but there is no incident field. Instead, the vertex condition imposed on the field is weakened: the field can grow as any power 
of the radius. Such field behaviour corresponds to an arbitrary configuration of sources located at the \RED{vertex} of the quarter-plane.
This problem is considered in the spectral domain \RED{and} a corresponding functional problem is formulated. \RED{We call it} the \textit{simplified functional problem} \RED{(\textbf{SFP})} 
to separate it from the \textit{functional problem} \RED{(\textbf{FP})} derived \RED{previously} for the problem of plane-wave diffraction.     

For the simplified problem \RED{\textbf{SFP}}, we show that the usage of the additive crossing property leads to \RED{a discrete set of explicit solutions, written} in the form of a product of Lam{\'e} \RED{and Hankel} functions \RED{and indexed by the eigenvalues of the Laplace--Beltrami operator.} \RED{In order to do so, we make a connection between functional equations and ordinary differential equations (ODEs) of Fuchsian type, and provide a concrete way of indenting two-dimensional integration contours in $\mathbb{C}^2$ using the \textit{bridge and arrow} notation introduced in Appendix \ref{app:appA}.} \RED{The solutions obtained 
agree} perfectly with what can be expected from the point of view of the \RED{sphero-conal} separation of variables method \cite{satterwhite,kraus}\RED{, but we obtain it solely by means of complex analysis. This point is emphasised on the diagram of figure \ref{fig:high-lev-diag}.}

\begin{figure}[h!]
	\centering{ \includegraphics[width=0.9\textwidth]{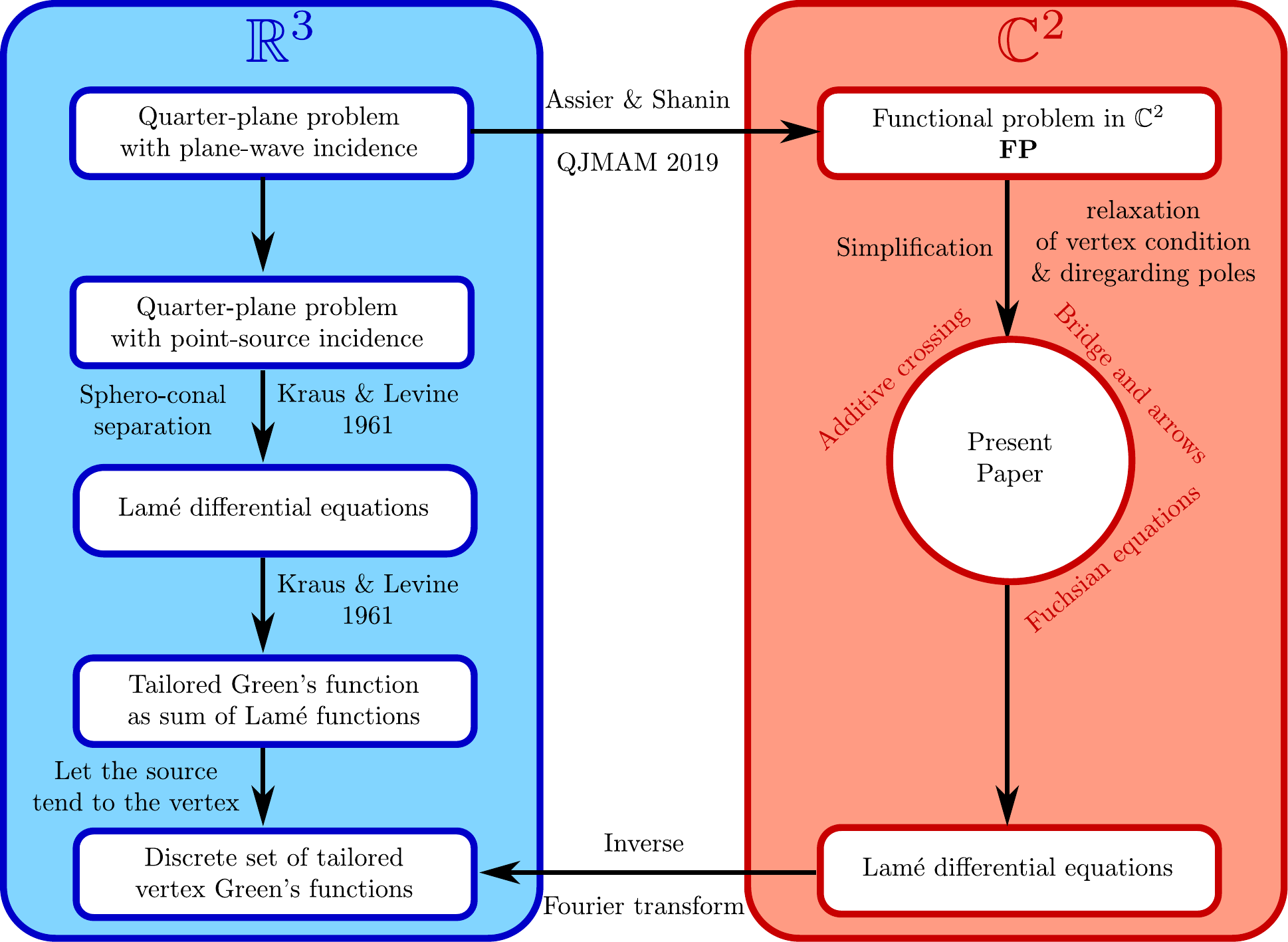}}
	\caption{High-level flow chart diagram of the present paper and links with existing literature}
	\label{fig:high-lev-diag}
\end{figure}
 
The rest of the paper is organised as follows. We formulate the simplified \RED{functional} problem \RED{(\textbf{SFP})} in Section \ref{sec:ProblemFormulation}, while Section \ref{sec:solfuncprob} is dedicated to solving this problem. More precisely, in Section \ref{sec:formulationangularalpha} we reformulate \RED{\textbf{SFP}} using complex angular coordinates \RED{to obtain the so-called \textit{angular simplified formulation} (\textbf{ASF})}. Using additive crossing we show that it can be recast as a stencil equation for functions of two complex variables in Section \ref{sec:stencilformulation}\RED{, leading to the \textit{stencil formulation} (\textbf{StF})}. The stencil equation is solved by separation of (complex) variables in Section \ref{sec:seprarablestencil}. As a result, a one-dimensional stencil equation \RED{(\textbf{1DSt})} is obtained. It is reduced to an ODE in Section \ref{sec:stencil2odetext}, and we show in Section \ref{sec:Lame} that this ODE can be reduced to the Lam{\'e} equation. Finally, in Section \ref{sec:wavefield}, the solution to \textbf{SFP} is transformed into a wave field 
via Inverse Fourier transform \RED{and we give an explicit form in terms of Lam{\'e} and Hankel functions. The content of the present paper, its successive spectral formulations and their connections are summarised by the diagram of figure \ref{fig:paper-diag}.}

\begin{figure}[h!]
	\centering{ \includegraphics[width=0.65\textwidth]{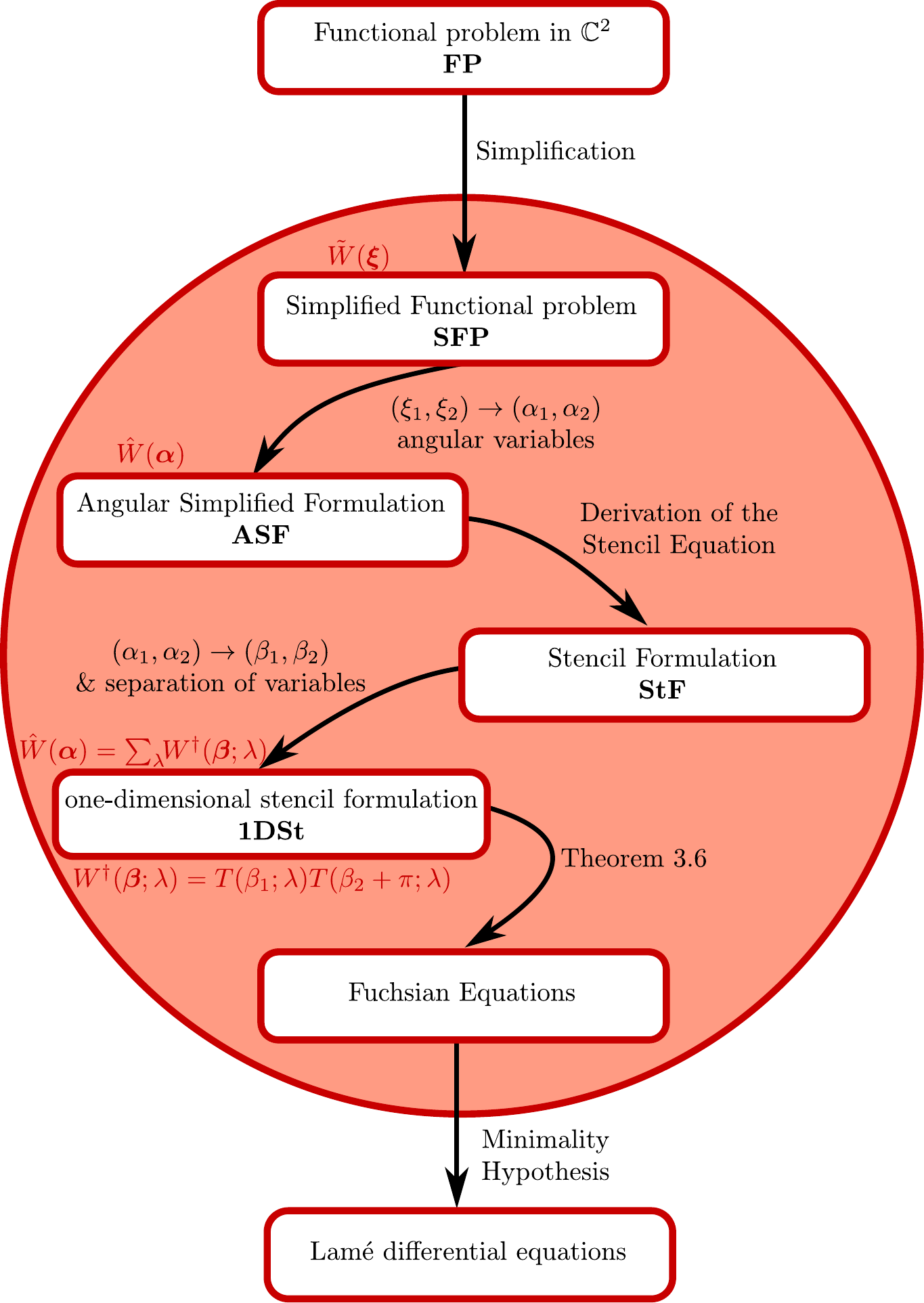}}
	\caption{Diagrammatic description of the present paper}
	\label{fig:paper-diag}
\end{figure}



\section{Problem formulation} \label{sec:ProblemFormulation}

\subsection{Functional problem for the quarter-plane diffraction problem}

As hinted in introduction, the present work is motivated by the canonical problem of diffraction of an incident plane wave $u^{\rm in}$ by a quarter-plane, which we will re-formulate here for completeness. The total field $u^{\rm t}$ satisfies the Helmholtz equation  
\begin{equation}
\Delta u^{\rm t} + k^2 u^{\rm t} = 0,
\label{Helmholtz}
\end{equation}
in the three-dimensional space $(x_1, x_2, x_3) \in \mathbbm{R}^3$, where $\Delta$ is the three-dimensional Laplacian. The wavenumber parameter $k$ is assumed to have a non-zero positive real part and a vanishingly small positive imaginary part. The imaginary part of $k$ can be interpreted as the absorption of the medium. The scatterer is the quarter-plane 
$\tmop{QP} \equiv \left\{ (x_1, x_2, x_3) ,
x_{1, 2} > 0 \text{ and } x_3 = 0 \right\}$.
The total field $u^{\rm t}$ obeys the Dirichlet boundary conditions $u^{\rm t} =0$ on the faces of the quarter-plane. 
The geometry of the problem is illustrated in figure \ref{fig:geom}. 

\begin{figure}[h!]
\centering{ \includegraphics[width=0.3\textwidth]{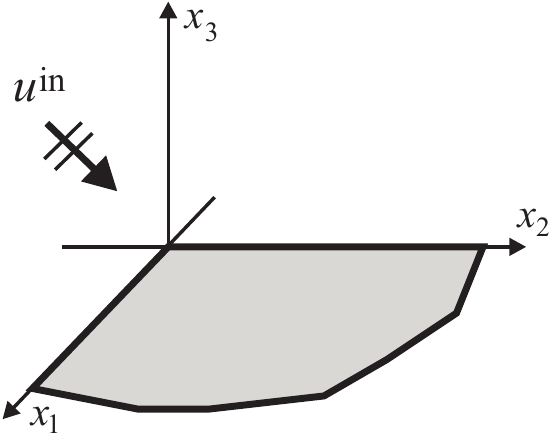}}
   \caption{The quarter-plane problem geometry}
\label{fig:geom}
\end{figure}
The total field $u^{\rm t}$
is a sum of the incident field $u^{\rm in}$ and the scattered field $u$: 
\[
u^{\rm t} = u^{\rm in} + u ,
\] 
and the incident field is a plane wave that can be expressed as 
\[
u^{\rm in} = \exp \left\{ 
i (k_1 x_1 + k_2 x_2 - (k^2 - k_1^2 - k_2^2)^{1/2} x_3) \right\}.
\] 
We assume that the wavenumber components of the incident wave are such that ${\rm Re}[k_{1,2}] > 0$ and ${\rm Im}[k_{1,2}] > 0$. 

For the problem to be well-posed, the scattered field $u$ should obey: 
\begin{itemize}
\item
the Helmholtz equation 
(\ref{Helmholtz}) in the free space, 

\item
the inhomogeneous 
Dirichlet condition $u = - u^{\rm in}$ on QP, 

\item
the radiation condition that can be formulated in the form of the limiting absorption 
principle, 
  
\item
the edge conditions at the two edges of the quarter-plane: 
$ x_1 = x_3 = 0, x_2 > 0 $ and 
$ x_2 = x_3 = 0, x_1 > 0 $, 

\item  
the vertex condition at the tip of the quarter-plane.   
  
\end{itemize}

The edge and vertex conditions take the form of Meixner conditions. They are equivalent to say that the energy-like combination $|\nabla u^{\rm t}|^2  + |u^{\rm t}|^2$ should be locally integrable near the edges and the vertex.


As often
for diffraction problems, it is convenient to work in the Fourier space.
We will consider $x_{1, 2, 3} \in \mathbbm{R}$ and
$\xi_{1, 2} \in \mathbbm{C}$ and will denote $\tmmathbf{\xi}= (\xi_1, \xi_2) \in
\mathbbm{C}^2$ and $\tmmathbf{x}= (x_1, x_2) \in \mathbbm{R}^2$. 
Introduce the double Fourier transform $\mathfrak{F}$ and its inverse
$\mathfrak{F}^{- 1}$ defined by
\begin{eqnarray*}
  \mathfrak{F} [\phi] (\tmmathbf{\xi}, x_3) = \int_{- \infty}^{\infty} \int_{-
  \infty}^{\infty} \phi (\tmmathbf{x}, x_3) e^{i\tmmathbf{\xi} \cdot
  \tmmathbf{x}} \mathd \tmmathbf{x} & \tmop{and} & \mathfrak{F}^{- 1}
  [\tilde{\Phi}] (\tmmathbf{x}) = \frac{1}{4 \pi^2} \int_{- \infty}^{\infty}
  \int_{- \infty}^{\infty} \tilde{\Phi} (\tmmathbf{\xi}) e^{- i\tmmathbf{\xi}
  \cdot \tmmathbf{x}} \mathd \tmmathbf{\xi}
\end{eqnarray*}
for any suitable physical function $\phi (\tmmathbf{x}, x_3)$ and spectral
function $\tilde{\Phi} (\tmmathbf{\xi})$. 

In {\cite{Assier2018a}}, we 
formulated a functional problem, which can be treated as a 2DWH problem. We introduced the 
unknown spectral functions 
$\tilde U$ and $\tilde W$ as 
\begin{equation}
\tilde{U} (\tmmathbf{\xi}) =\mathfrak{F} [u] (\tmmathbf{\xi}, 0^+), 
\qquad 
\tilde{W} (\tmmathbf{\xi}) =\mathfrak{F} \left[ \frac{\partial u}{\partial
  x_3} \right] (\tmmathbf{\xi}, 0^+),
\label{eq:SpecFun}  
\end{equation}  
and showed that these functions obey the {\em functional equation\/} 
\begin{eqnarray}
  \tilde{K} (\tmmathbf{\xi}) \tilde{W} (\tmmathbf{\xi}) & = & i \tilde{U}
  (\tmmathbf{\xi}),  \label{eq:functionaleq}
\end{eqnarray}
where the {\tmem{kernel}} $\tilde{K}$ is
defined by 
\[
  \tilde{K} (\tmmathbf{\xi}) = (k^2 - \xi_1^2 - \xi_2^2)^{- 1 / 2} .
\]

Upon denoting $\tmmathbf{k}= (k_1, k_2)$, the incident plane wave takes the
form $u^{\text{in}} (\tmmathbf{x}, x_3) = e^{i (\tmmathbf{k} \cdot
\tmmathbf{x}- x_3 / \tilde{K} (\tmmathbf{k}))}$. The unknown functions $\tilde U(\tmmathbf{\xi})$ and $\tilde W(\tmmathbf{\xi})$
are initially defined for real $\tmmathbf{\xi}$, but they can be analytically continued 
into much wider domains. This analytical continuation was the main subject of \cite{Assier2018a}.
Thus, below, $\tmmathbf{\xi}$ is considered as a pair of {\em complex\/} variables.

Such functions of two complex variables can have singularities of polar and branching types. 
Such singularities are not isolated points like for one complex variable functions, but are located on \textit{analytic sets}, which are surfaces of real dimension~2 embedded in the space $(\xi_1, \xi_2)\in\mathbb{C}^2$ having real dimension~4. For convenience, in \cite{Assier2018a}, such sets have been called \textit{polar 2-lines} and \textit{branch 2-lines}. We will continue to use the same terminology here. 

Let us introduce the domains
$\widehat{H}^{+}$ and $\widehat{H}^{-}$ as the upper and lower half-planes of the complex plane. Consider also the domains $H^{\pm}$ that are the upper and lower half-planes cut along the cuts $h^{\pm}$ as illustrated in figure \ref{fig:geomandHplus}.
The cuts $h^{\pm}$ are the images of the real axis under the mappings 
$\xi \to \pm \sqrt{k^2 - \xi^2}$.

\begin{figure}[h]
  \centering{\includegraphics{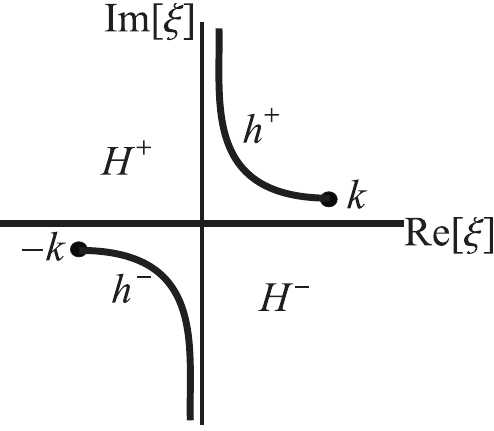}}
  \caption{The sets $H^{\pm}$ and the cuts $h^{\pm}$ within the $\xi$ complex plane}
\label{fig:geomandHplus}
\end{figure}

In {\cite{Assier2018a}} we introduced the important notion 
of {\tmem{additive crossing}}. The simplest possible definition of 
additive crossing is the following. 
Let $D_1$ and $D_2$ be some domains in the $\xi_1$ and $\xi_2$ complex planes respectively (see figure~\ref{fig:addcross}). 
Assume that these domains are cut along the cuts $\chi_{1,2}$ starting at the points 
$d_{1,2}$ and denote the shores of the cuts by the symbols $r$ (right) and 
$\ell$ (left).

\begin{figure}[h]
\centering{\includegraphics{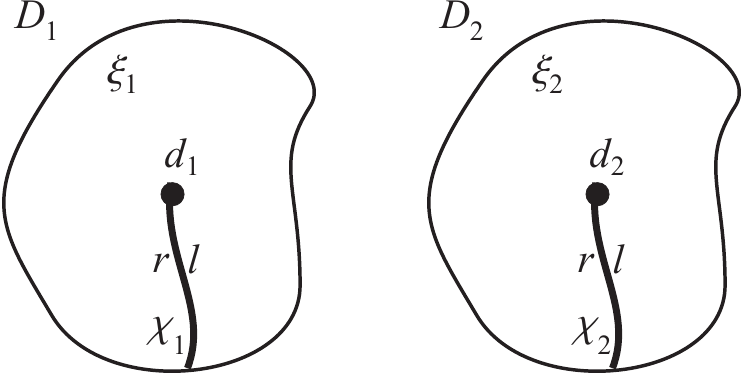}}
   \caption{Geometrical illustration of the sets used in defining additive crossing}
\label{fig:addcross}
\end{figure}

Consider $\tilde{\Phi}(\xi_1 , \xi_2)$ to be a function holomorphic in the domain 
$(D_1 \setminus \chi_1) \times (D_2 \setminus \chi_2)$ and one-sided continuous on the shores of the cuts (here by cuts, we mean the sets $\chi_1 \times D_2$ and $D_1 \times \chi_2$). Let $\xi_1 \in \chi_1$ and $\xi_2 \in (D_2 \setminus \chi_2)$ and denote by $\tilde{\Phi}(\xi_1^{\ell} , \xi_2)$ and $\tilde{\Phi}(\xi_1^r , \xi_2)$ the values of $\tilde{\Phi}$ on different shores of $\chi_1$. If these values are not equal then 
$d_1 \times D_2 $ is a branch 2-line of $\tilde{\Phi}$. Similarly, we can define $\tilde{\Phi}(\xi_1 , \xi_2^{\ell})$ and $\tilde{\Phi}(\xi_1 , \xi_2^r)$ for $\xi_1\in(D_1 \setminus \chi_1)$ and $\xi_2\in\chi_2$, and if these two quantities are not equal, then $D_1 \times d_2$ is also a branch 2-line of~$\tilde{\Phi}$. By continuity, it is hence possible to define the quantities $\tilde{\Phi} (\xi_1^{r,\ell}, \xi_2^{r,\ell})$.

\begin{definition}
We say that such a function $\tilde{\Phi}$ has the additive crossing property about the branch 2-lines  $d_1 \times D_2$
and $D_1 \times d_2$ if
  \begin{align*}
    \tilde{\Phi} (\xi_1^{\ell}, \xi_2^{\ell}) + \tilde{\Phi} (\xi_1^r, \xi_2^r) & =  \tilde{\Phi}
    (\xi_1^{\ell}, \xi_2^r) + \tilde{\Phi} (\xi_1^r, \xi_2^{\ell}).
  \end{align*}  
\end{definition}

Note that the definition introduced above does actually not require the concept of a branch 2-line. In fact, it does admit some generalisations, but this is not needed for the present work. Using this concept of additive crossing, one can formulate the main theorem proven in \cite{Assier2018a}:

\begin{theorem}
  \label{th:part1} Let $k_{1,2}$ be such that 
  $\tmop{Re}[k_{1, 2}] > 0$ and $\tmop{Im}[k_{1, 2}] > 0$. 
  For any function $\tilde{W} (\tmmathbf{\xi})$, consider
  the two associated functions $\tilde{U} (\tmmathbf{\xi})$ and $\tilde{U}'
  (\tmmathbf{\xi})$ defined by
  \begin{eqnarray*}
    \tilde{U} (\tmmathbf{\xi}) = - i \tilde{K} (\tmmathbf{\xi}) \tilde{W}
    (\tmmathbf{\xi}) & \tmop{and} & \tilde{U}' (\tmmathbf{\xi}) = \tilde{U}
    (\tmmathbf{\xi}) - (\xi_1 + k_1)^{- 1} (\xi_2 + k_2)^{- 1}.
  \end{eqnarray*}
  If the function $\tilde{W} (\tmmathbf{\xi})$ 
  and the associated function $\tilde{U}' (\tmmathbf{\xi})$  
  have the following properties:
  \begin{enumerate}[label=\textup{FP\arabic*}]
    \item \label{itm:1th2}$\tilde{W}$ is holomorphic in the domain $(\hat{H}^+
    \times (\hat{H}^+ \cup H^- \backslash \{ - k_2 \})) \cup ((\hat{H}^+ \cup
    H^- \backslash \{ - k_1 \}) \times \hat{H}^+)$
    
    \item \label{itm:2th2}$\tilde{W} (\tmmathbf{\xi})$ has poles (2-lines) 
    at $\xi_1 = - k_1$ and $\xi_2 = - k_2$ with known residues
    
    \item \label{itm:3th2} The associated function $\tilde{U}'
    (\tmmathbf{\xi})$ is holomorphic in the domain $(H^- \backslash \{ - k_1 \})
    \times (H^- \backslash \{ - k_2 \})$

    \item \label{itm:4th2}$\tilde{U}' (\tmmathbf{\xi})$ has the additive crossing property for
    the 2-lines $\xi_1 = - k$ and $\xi_2 = - k$ with cuts $h^-$
    (it means that $D_{1,2} = H^- \setminus \{ -k_{1,2} \}$,
    $d_{1,2}= - k$ and $\chi_{1,2} =h^-$)

    \item \label{itm:5th2}There exist some functions $E_1 (\xi_1)$ and $E_2
    (\xi_2)$, defined for complex $\xi_1$ and $\xi_2$, such that
    \begin{eqnarray*}
      | \tilde{W} (\xi_1, \xi_2) | < E_1 (\xi_1) | \xi_2 |^{- 1 / 2} &
      \tmop{as} & | \xi_2 | \rightarrow \infty, \tmop{Im} [\xi_2] > 0\\
      | \tilde{W} (\xi_1, \xi_2) | < E_2 (\xi_2) | \xi_1 |^{- 1 / 2} &
      \tmop{as} & | \xi_1 | \rightarrow \infty, \tmop{Im} [\xi_1] > 0
    \end{eqnarray*}
        
    \item \label{itm:6th2}There exists a function $C (\beta, \psi_1, \psi_2)$,
    defined for $0 < \beta < \pi / 2$ and $0 < \psi_{1, 2} < \pi$ such that
    for real $\Lambda$
    \begin{eqnarray*}
      | \tilde{W} (\xi_1, \xi_2) | < C (\beta, \psi_1, \psi_2) \Lambda^{- 1 -
      \mu} & \tmop{for} \tmop{some} & \mu > - 1 / 2,
    \end{eqnarray*}
    where $\xi_{1, 2}$ are parametrised as follows for large real $\Lambda$
    \begin{eqnarray*}
      \xi_1 = \Lambda e^{i \psi_1} \cos (\beta) & \tmop{and} & \xi_2 = \Lambda
      e^{i \psi_2} \sin (\beta)
    \end{eqnarray*}
    
  \end{enumerate}
  Then the field $u (\tmmathbf{x}, x_3)$ defined by $u (\tmmathbf{x}, x_3) = -
  i\mathfrak{F}^{- 1} [\tilde{K} \tilde{W} e^{i | x_3 | / \tilde{K}}]
  (\tmmathbf{x})$ is the sought-after solution to the quarter-plane problem.
\end{theorem}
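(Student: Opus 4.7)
The plan is to take $u(\mathbf{x},x_3) = -i\mathfrak{F}^{-1}[\tilde{K}\tilde{W}e^{i|x_3|/\tilde{K}}](\mathbf{x})$ as a definition and verify, one by one, the conditions placed on the scattered field in Section~\ref{sec:ProblemFormulation}. Several of these follow immediately from the structure of the representation: the Helmholtz equation away from $x_3=0$ holds because $1/\tilde{K}$ is the vertical wavenumber of the transverse mode $(\xi_1,\xi_2)$; the limiting-absorption radiation condition is enforced by the chosen branch of $\tilde{K}$ together with $\mathrm{Im}[k]>0$; and $u$ is automatically continuous across $x_3=0$ because of the even dependence on $|x_3|$. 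Reading off the traces at $x_3=0^+$, the functional equation~(\ref{eq:functionaleq}) gives $u(\mathbf{x},0^{\pm}) = \mathfrak{F}^{-1}[\tilde{U}](\mathbf{x})$ and $\partial_{x_3}u(\mathbf{x},0^+) = \mathfrak{F}^{-1}[\tilde{W}](\mathbf{x})$. It therefore suffices to show that $\mathfrak{F}^{-1}[\tilde{W}]$ vanishes on the complementary aperture $\mathbb{R}^2\setminus\mathrm{QP}$ (so that $\partial_{x_3}u$ has no jump outside the scatterer) and that $\mathfrak{F}^{-1}[\tilde{U}] = -e^{i(k_1 x_1+k_2 x_2)} = -u^{\mathrm{in}}(\cdot,0)$ on $\mathrm{QP}$.

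The complementary-aperture condition reduces to a standard one-dimensional contour argument. For $x_1<0$ I close the $\xi_1$-contour in $\hat H^+$: by FP1 the function $\tilde{W}$ is jointly holomorphic there (the pole at $\xi_1=-k_1$ lies in $\hat H^-$, so it does not interfere), and by FP5 the large-semicircle contribution vanishes. The case $x_2<0$ is symmetric. For the Dirichlet condition on $\mathrm{QP}$, a direct residue calculation shows that, when $x_1,x_2>0$, closing both contours in $\hat H^-$ turns the rational term $(\xi_1+k_1)^{-1}(\xi_2+k_2)^{-1}$ subtracted in the definition of $\tilde{U}'$ into exactly $-e^{i(k_1 x_1+k_2 x_2)}$. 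The Dirichlet condition therefore reduces to proving that $\mathfrak{F}^{-1}[\tilde{U}'](\mathbf{x}) = 0$ on $\mathrm{QP}$.

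This last step is the crux. I would deform the $(\xi_1,\xi_2)$-integration into $H^-\times H^-$, where by FP3 the function $\tilde{U}'$ is holomorphic away from the two branch 2-lines $h^-\times H^-$ and $H^-\times h^-$. The deformed contour must wrap both cuts simultaneously, producing a four-fold combination of boundary values on the shores:
\begin{equation*}
\tilde{U}'(\xi_1^\ell,\xi_2^\ell) + \tilde{U}'(\xi_1^r,\xi_2^r) - \tilde{U}'(\xi_1^\ell,\xi_2^r) - \tilde{U}'(\xi_1^r,\xi_2^\ell),
\end{equation*}
which vanishes identically by the additive-crossing hypothesis FP4. The main obstacle is to legitimise this two-variable contour deformation in $\mathbb{C}^2$: one must control the contour while only one of the two branch lines has been wrapped, discard the large-semicircle contributions using FP5, and check that no spurious residue from the former poles at $\xi_j=-k_j$ reappears during the deformation. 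Finally, the Meixner edge conditions follow from a local asymptotic analysis of the inverse Fourier integrals near the half-lines $x_j=x_3=0$, where the $|\xi_j|^{-1/2}$ decay of FP5 yields the standard $r^{-1/2}$ edge behaviour compatible with Meixner, and the vertex condition follows similarly from the dyadic bound of FP6 by the rescaling $\xi_j\sim 1/r$ at the tip.
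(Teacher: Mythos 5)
You should first be aware that this paper does not actually prove Theorem~\ref{th:part1}: it is introduced as ``the main theorem proven in \cite{Assier2018a}'' and is imported wholesale, the only in-paper justification being the interpretive paragraph that follows it (\ref{itm:1th2} gives 1/4-basedness of $\tilde W$, \ref{itm:3th2}--\ref{itm:4th2} give 3/4-basedness of $\tilde U'$, \ref{itm:5th2} the edge conditions, \ref{itm:6th2} the vertex condition, radiation by construction). Your outline reproduces exactly that decomposition and fills in the elementary pieces correctly: the traces $u(\cdot,0^\pm)=\mathfrak{F}^{-1}[\tilde U]$ and $\partial_{x_3}u(\cdot,0^+)=\mathfrak{F}^{-1}[\tilde W]$, the upper-half-plane contour closure for the complementary aperture, and the residue computation turning $(\xi_1+k_1)^{-1}(\xi_2+k_2)^{-1}$ into $-e^{i(k_1x_1+k_2x_2)}$ (the signs work out: each lower-half-plane closure contributes $-i\,e^{ik_jx_j}$, and the product is $-u^{\rm in}|_{x_3=0}$). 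You also correctly identify the crux --- that the double wrap of the contour around $h^-\times h^-$ produces precisely the four-shore combination annihilated by the additive crossing hypothesis \ref{itm:4th2} --- which is indeed the mechanism the authors allude to when they call \ref{itm:3th2} and \ref{itm:4th2} ``nontrivial.''

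Two points keep this from being a proof rather than a faithful outline. First, you explicitly defer the legitimisation of the two-variable deformation (controlling the intermediate stage where only the $\xi_1$-contour has been wrapped while $\xi_2$ is still real, and checking that no residue at $\xi_j=-k_j$ reappears); that is precisely the technical content delegated to \cite{Assier2018a}, so acknowledging it is honest but leaves the hard step undone. Second, and more concretely, you invoke \ref{itm:5th2} to discard the large arcs during deformations into the \emph{lower} half-planes, but the bounds in \ref{itm:5th2} are stated only for $\tmop{Im}[\xi_j]>0$, and \ref{itm:6th2} is likewise parametrised with $0<\psi_{1,2}<\pi$; as literally stated, neither hypothesis controls $\tilde U'$ at infinity in $H^-$, so the decay needed to close contours downward (both for the residue computation and for the cut-wrapping argument) has to be obtained by some additional continuation or growth argument not present in your sketch. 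Within the scope of what this paper itself records about the theorem, your reconstruction is consistent and identifies the right load-bearing steps.
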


From point \ref{itm:1th2} it follows that the function $\tilde W$ is 1/4-based (its inverse Fourier transform is non-zero only for $x_1 >0$ and $x_2 >0$). Point \ref{itm:2th2} is responsible for the 
incident plane wave. Points \ref{itm:3th2} and~\ref{itm:4th2} are nontrivial. From them it follows that 
$\tilde U'$ is 3/4-based, i.e.\ its inverse Fourier transform is equal to zero 
for $x_1 >0$ and $x_2 > 0$. Point \ref{itm:5th2} is related to the edge conditions, while 
point \ref{itm:6th2} is responsible for the vertex condition. The radiation condition should be fulfilled by construction.  

The conditions of  Theorem~\ref{th:part1} form what we will refer to as the \textit{functional problem} (\textbf{FP})
for~$\tilde W (\tmmathbf{\xi})$. By this we mean that the functional problem for $\tilde W$
is the following:
\begin{align}
\textbf{FP} &: \text{Find a function $\tilde W(\tmmathbf{\xi})$ 
obeying the points \ref{itm:1th2}--\ref{itm:6th2}}
\label{eq:FPproblem}
\end{align}

\subsection{A simplified functional problem}
\label{sec:simplifiedformulation}

The purpose of the present work is to illustrate the significance and practical implications of the additive crossing property. Hence, for simplicity, let us now consider a modified version of the spectral formulation (\textbf{FP}) by making the following simplifications. We will disregard the polar singularities due to $k_{1, 2}$ corresponding to the
incident wave (affecting points \ref{itm:1th2}, \ref{itm:2th2} and
\ref{itm:3th2}, making the conditions imposed on $\tilde{W}$ {\tmem{stronger}}. In addition, we will weaken the
vertex growth conditions (affecting point \ref{itm:6th2} of
Theorem \ref{th:part1}) imposed on $\tilde{W}$, allowing an arbitrary 
power growth (the parameter $\mu$ should now be considered as arbitrary). Finally, let us abandon the edge growth conditions (point \ref{itm:5th2} of
Theorem \ref{th:part1}). Surprisingly, we will see that with the weakened 
vertex condition, the edge conditions need to be formulated in a slightly 
different form. This is why we do not consider the edge conditions now, and will return to them later, ultimately using them for selecting the right solutions in Section \ref{sec:stencil2odetext}.  

This results in the following {\em simplified functional problem} (\textbf{SFP}):
\begin{align}
\textbf{SFP} &: \text{Find a function $\tilde W(\tmmathbf{\xi})$ 
obeying the points \ref{item:SFP1}--\ref{item:SFP4}},
\label{eq:SFPproblem}
\end{align}
where the four properties are
{\em
  \begin{enumerate}[label=\textup{SFP\arabic*}]
    \item \label{item:SFP1}$\tilde{W}$ is analytic in the domain $(\hat{H}^+ \times (\hat{H}^+
    \cup H^-)) \cup ((\hat{H}^+ \cup H^-) \times \hat{H}^+)$
    
    \item \label{item:SFP2}The function $\tilde{U} (\tmmathbf{\xi}) =-i \tilde{K}
    (\tmmathbf{\xi}) \tilde{W} (\tmmathbf{\xi})$ is analytic in the domain
    $H^- \times H^-$
    
    \item \label{item:SFP3}The function $\tilde{U} (\tmmathbf{\xi})$ has the additive crossing
    property for the 2-lines $\xi_1 = - k$ and $\xi_2 = - k$ with associated
    cuts $h^-$
    
   \item \label{item:SFP4}There exists a real parameter $\mu$
   such that for any $\xi_{1,0}$, $\xi_{2,0}$  
   \[
   |\tilde W (\Lambda \xi_{1,0} , \Lambda \xi_{2,0})| < \Lambda^{-1-\mu}
   \]
   for large enough $\Lambda>0$, where the points $\xi_{1,0}$ and $\xi_{2,0}$ are chosen such that $(\Lambda \xi_{1,0} , \Lambda \xi_{2,0})$ remains within the domain of analyticity of $\tilde W$.   
     \end{enumerate}
}
The present paper is dedicated to the resolution of this simplified functional problem \textbf{SFP}. 
Below we show that solutions of this functional problem correspond 
to wave fields generated by some source configurations at the vertex of the quarter-plane. The connection between the solution of the simplified functional problem \textbf{FP} and the 
simplified functional problem \textbf{SFP} is not completely clear at this stage and is beyond the scope of this work. \RED{Note that due to the presence of branch 2-lines, the unknown functions $\tilde{W}$ and $\tilde{U}$ are effectively multivalued functions of two complex variables. For this work, we do not need to (and we do not aim to) construct an associated Riemann manifold over $\mathbb{C}^2$ (generalisation of Riemann surface in 1D complex analysis), on which these functions are analytic. We have considered such questions in a different physical context in \cite{analyticalcontifields}.}

\subsection{On the function $\tilde W(\tmmathbf{\xi})$ and its associated wave field}

\label{sec:wavefield}

A solution $\tilde W(\tmmathbf{\xi})$ of 
the simplified functional problem \textbf{SFP} corresponds to a wave field 
$u(\tmmathbf{x},x_3)$
defined by 
\begin{eqnarray}
  u (\tmmathbf{x}, x_3) & = & - \frac{i}{4 \pi^2} 
  \int_{\Gamma_{\xi}}
  \int_{\Gamma_{\xi}}
    \tilde{K} (\xi_1, \xi_2) \tilde{W} (\xi_1, \xi_2) 
    e^{i x_3 \tilde{K}^{-1}   (\xi_1, \xi_2)} 
    e^{- i (\xi_1 x_1 + \xi_2 x_2)} \mathd \xi_1  \mathd
  \xi_2,  
  \label{eq:physicalfieldintegralxi}
\end{eqnarray}
for $x_3 > 0$, where $\Gamma_{\xi}$ is just the real segment $(-\infty,\infty)$. 
This representation is inherited from the definition (\ref{eq:SpecFun}).
The properties of the integral (\ref{eq:physicalfieldintegralxi}) should be 
considered carefully. 

The integrand has no singularities on the surface of integration, i.e.\ on the 
real plane, since $k$ has a small positive imaginary part. Formally, the convergence of the integral can be questionable, since $\tilde W$
can grow at infinity as an arbitrary power of $|\tmmathbf{\xi}|$. However, note that if $x_3 > 0$ then the integral converges exponentially since $\text{Im}[\tilde{K}^{-1}]>0$. When $x_3=0$, in order for it to remain exponentially convergent, it is necessary to regularise the integral by deforming the contour ever so slightly. This point is addressed in Appendix \ref{app:SPM}. 

The exponential convergence of the regularised integral (\ref{eq:physicalfieldintegralxi}) enables one to differentiate it with respect to the variables $x_{1,2,3}$, which play the role of parameters. One can easily show that $u(x_1, x_2, x_3)$ obeys the Helmholtz equation (\ref{Helmholtz})
for $x_3 > 0$. This is supported by the elementary observation that (\ref{eq:physicalfieldintegralxi})
has the structure of a plane wave decomposition. 

Using the methods presented in \cite{Assier2018a}, one can prove that   
$u(x_1, x_2, 0) = 0$ for  $x_1 > 0$ and $x_2>0$ . This follows from
point \ref{item:SFP1} of the simplified functional problem (implying the 1/4-basedness of $\tilde{W}$). Besides, using point \ref{item:SFP2}, one can prove that
$\tfrac{\ptl u}{\ptl x_3} (x_1, x_2 , 0)  = 0$ for $x_1 < 0$ or $x_2 < 0$. 

By applying the multidimensional saddle-point method \cite{Bleistein2012} (which is not elementary in this case)
one can prove that 
$u(\tmmathbf{x},x_3)$ obeys the radiation condition for $x_3 > 0$. Intuitively this is clear, 
since the plane wave decomposition (\ref{eq:physicalfieldintegralxi})
contains only waves that are outgoing and decaying for $x_3 \to \infty$. Also it is possible to show that in the area $x_3 = 0$, $\sqrt{x_1^2 + x_2^2} \to \infty$ the field $u$ contains only outgoing waves. 

Since we ultimately wish to let ${\rm Im}[k] \to 0$, one should be careful to \textit{indent} the contour of integration of (\ref{eq:physicalfieldintegralxi}) properly in order to bypass the singularity defined by the equation $\xi_1^2 + \xi_2^2 = k^2$. Since the contour is a two-dimensional surface embedded in a four-dimensional space, this is non-trivial. In order to facilitate this task, we will make use of the \textit{bridge and arrow} notation, to which the Appendix \ref{app:appA} is dedicated. 

A consequence of the application of the multidimensional saddle-point method is that the values of $\tilde W(\xi_1, \xi_2)$ 
for the circle $\xi_1^2 + \xi_2^2 = k^2$ play an important role: the directivity pattern of the 
field (also known as diffraction coefficient) is proportional to the function $\tilde W$ taken at these values (see \cite{surprise}, where a similar result is used).

We shall see later that the function $\tilde W$ is multivalued. The values corresponding
to the directivity (i.e.\ belonging to the surface of integration within this circle)
will be referred to as belonging to the {\em physical sheet} of the Riemann manifold of $\tilde W$.  

Finally, according to general properties of the Fourier transform, the condition \ref{item:SFP4} guarantees that the field has a singularity at the origin no stronger than of power type. 

We can hence conclude that, if $\tilde W$ 
satisfies the simplified functional problem \textbf{SFP} formulated above, 
it corresponds to a mixed homogeneous boundary-value problem on a quarter-plane. The solution has a weakened vertex condition (comparatively to the classical diffraction problem) and abandoned edge conditions (we plan to impose the edge conditions later). Such solution contains only outgoing wave components. 
  

\section{Solution of the simplified functional problem} \label{sec:solfuncprob}

\subsection{The simplified functional problem in the angular coordinates}
\label{sec:formulationangularalpha}

Let us introduce the so-called angular coordinates $\alpha_{1, 2}$, linked to
the Fourier coordinates $\xi_{1, 2}$ by
\begin{equation}
  \alpha_{1, 2} = \arcsin (\xi_{1, 2} / k) 
  \quad  \tmop{ and } \quad \xi_{1, 2} = k \sin
  (\alpha_{1, 2})  .
  \label{eq:mappingccord}
\end{equation}
Here of course, $\alpha_{1, 2}$ are understood to be complex. Let us list the properties of the mapping $\xi \to \alpha$. The points 
$\pm k$ located on the physical sheet are mapped to the points $\pm \pi /2$. 
Indeed, the points with affixes (coordinates) $\xi = \pm k$
located on other sheets of the Riemann surface of the analytical continuation of $\tilde W$ are mapped to $\pm \pi/2 + \pi n$. 

The shores of the cut $h^+$ are mapped to the curve $m^+$ shown in 
figure~\ref{fig:fromxitoalphacontour}. The left shore of the cut 
is mapped to the part of $m^+$ with ${\rm Im}[\alpha] >0$, while the right shore is mapped 
to the part of $m^+$ with ${\rm Im}[\alpha] < 0$.
Similarly, the shores of $h^-$ are mapped to the curve $m^-$. This mapping from $m^\pm$ to $h^\pm$ is illustrated on figure~\ref{fig:fromxitoalphacontour} and summarised in table~\ref{table:mapping-summary}.

\begin{table}[h]
  \centering\begin{tabular}{l}
    \begin{tabular}{|c|c|c|c|c|}
      \hline
      set in $\alpha$ plane & lower part of $m^-$ & upper part of $m^-$ &
      lower part of $m^+$ & upper part of $m^+$\\
      \hline
      set in $\xi$ plane & left shore of $h^-$ & right shore of $h^-$ & right
      shore of $h^+$ & left shore of $h^+$\\
      \hline
    \end{tabular}
  \end{tabular}
  \caption{Summary of which part of $h^\pm$ are mapped to which part of $m^\pm$ by (\ref{eq:mappingccord})}
\label{table:mapping-summary}
\end{table}

One should also note that the sets $m^{\pm}$ have the inherent symmetry property that the upper part of $m^{\pm}$ (such that $\text{Im}[\alpha] \geq 0$) and the lower part of $m^\pm$ (such that $\text{Im}[\alpha] \leq 0$) are the image of each other via the mapping $\alpha\rightarrow\pm\pi-\alpha$, as summarised in table \ref{table:symalphaplane}.

\begin{table}[h]
  \centering\begin{tabular}{l}
    \begin{tabular}{|c|c|c|}
      \hline
      set in $\alpha$ plane & $m^-$ & $m^+$\\
      \hline
      symmetry mapping linking upper and lower part & $- \pi - \alpha$
      & $\pi - \alpha$\\
      \hline
    \end{tabular}
  \end{tabular}
  \caption{Inherent symmetry of the sets $m^{\pm}$}
\label{table:symalphaplane}
\end{table}

As is also shown in figure~\ref{fig:fromxitoalphacontour}, we introduce the contour $\Gamma_\alpha$ as the image of the contour $\Gamma_\xi$ by the mapping (\ref{eq:mappingccord}). It is clear that we have $\Gamma_{\alpha}=m^-+\tfrac{\pi}{2}=m^+-\tfrac{\pi}{2}$.
Both $\Gamma_{\xi}$ and $\Gamma_{\alpha}$ are shown by a red line in the figure. We can also introduce the curved strips $M^\pm $ as it is 
shown in the figure. Following from the properties of conformal mappings, 
it is clear that the domains $H^{\pm}$ are mapped to $M^{\pm}$ by (\ref{eq:mappingccord}). 

In what follows we will use notations like $\pi - M^+$. This 
particular notation correspond to 
the set of values of all $\alpha = \pi - \alpha'$ where $\alpha'\in M^+$.   

\begin{figure}[h]
\centering\includegraphics[width=0.48\textwidth]{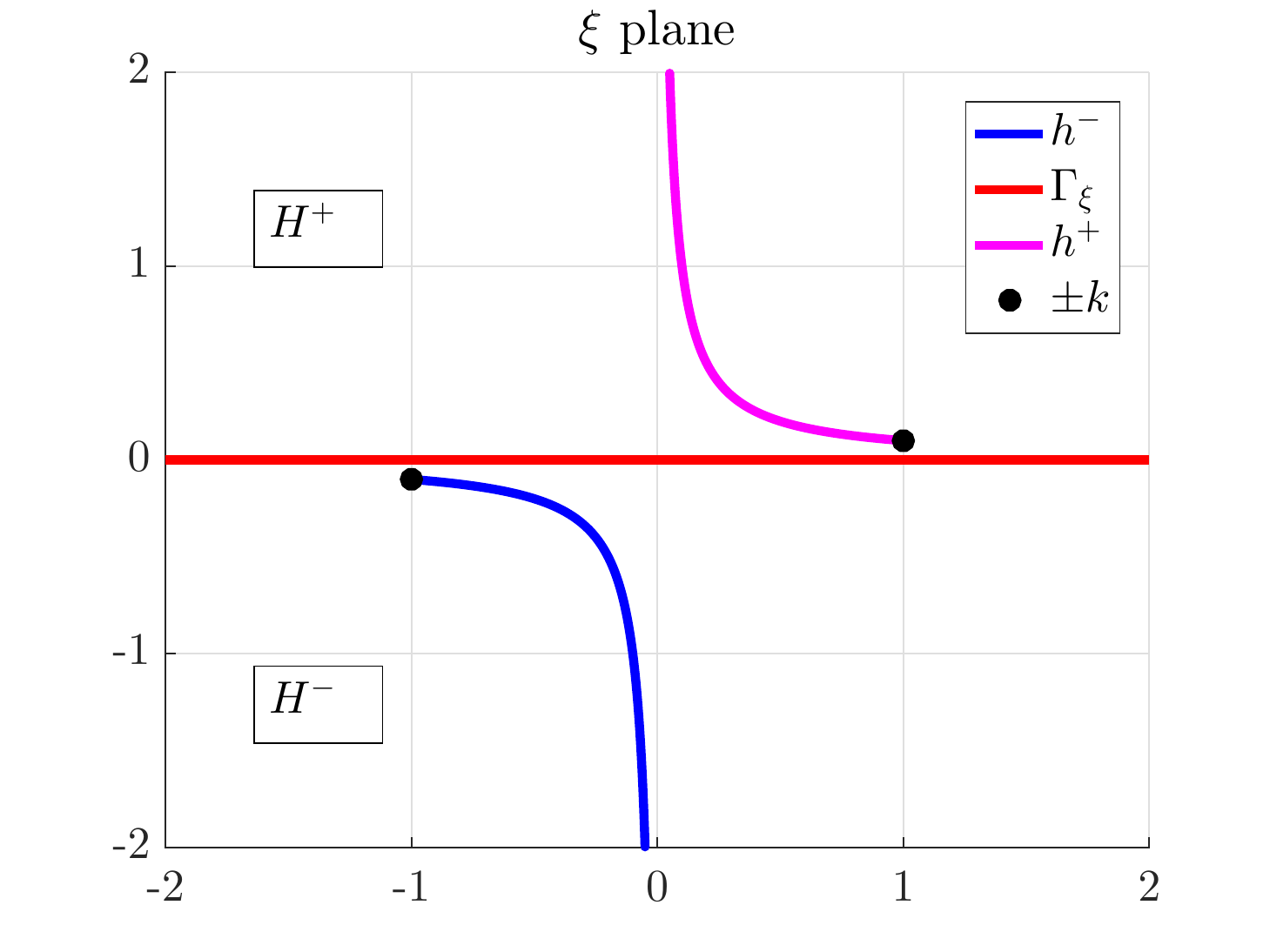}
\includegraphics[width=0.48\textwidth]{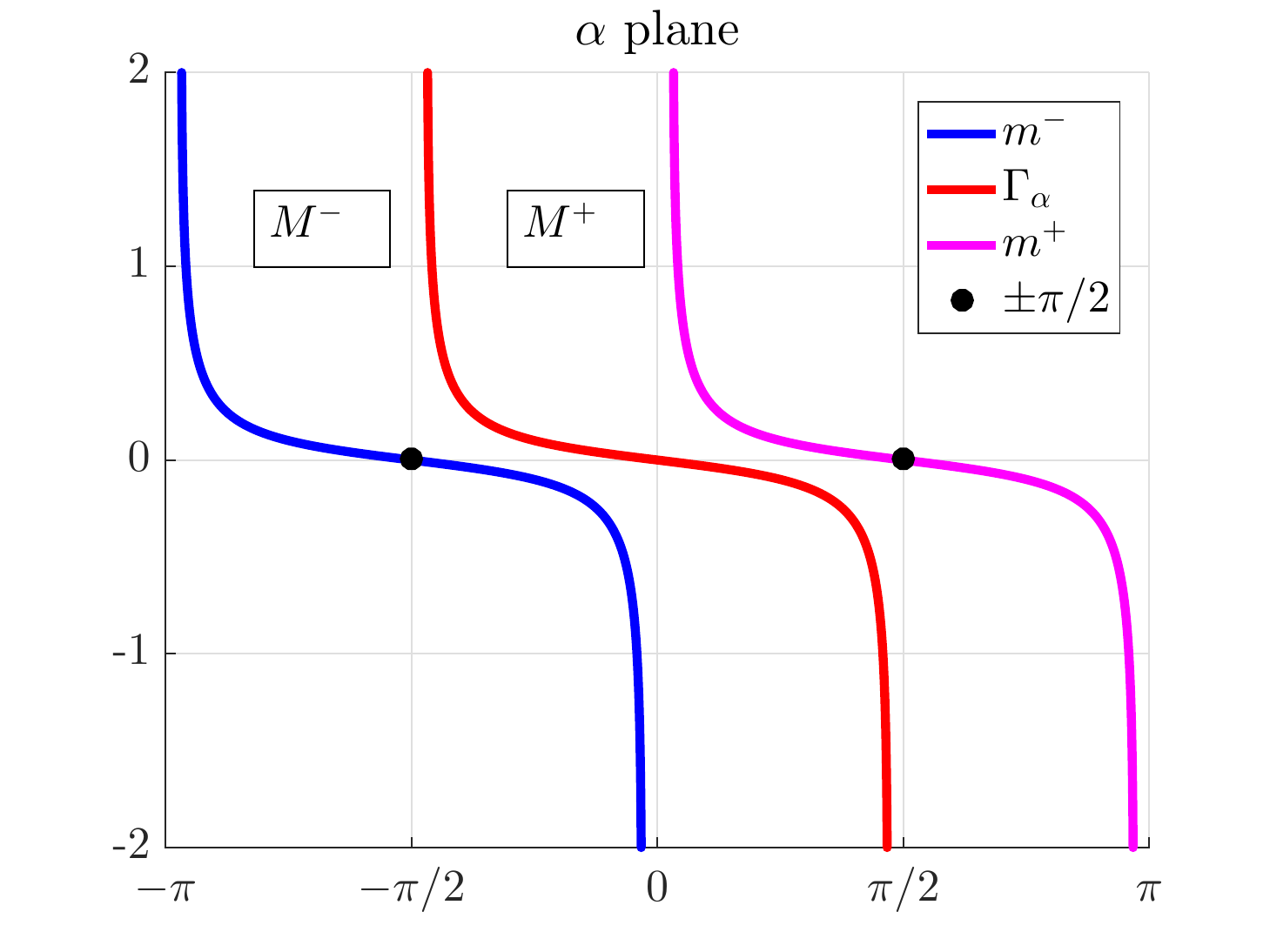}
  \caption{Transformation from the $\xi$ plane to the $\alpha$ plane}
\label{fig:fromxitoalphacontour}
\end{figure}

For any function $\tilde{\Phi} (\xi_1, \xi_2)$, we can define a new
function $\hat{\Phi} (\alpha_1, \alpha_2)$ by
\begin{eqnarray*}
  \hat{\Phi} (\alpha_1, \alpha_2) & = & \tilde{\Phi} (k \sin (\alpha_1), k
  \sin (\alpha_2)).
\end{eqnarray*}
Using this rule, we define the functions $\hat{W}$, $\hat{K}$ and $\hat{U}$ 
of $\boldsymbol{\alpha}=(\alpha_1, \alpha_2)$.
We can reformulate the simplified functional problem \textbf{SFP} using the angular coordinates. This leads to a new problem referred to as the \emph{angular simplified formulation} (\textbf{ASF}), defined as follows:
\begin{align}
\textbf{ASF} &: \text{Find a function $\hat{W}(\boldsymbol{\alpha})$ obeying the points \ref{itm:1prop4}--\ref{itm:4prop4}}
\end{align}
The following proposition lists the conditions of this new problem.

\begin{proposition}
  \label{prop:angularmodel}Let $\tilde{W} (\tmmathbf{\xi})$ be a function that
  solves the simplified functional problem \textup{\textbf{SFP}}. 
  Then its associated function $\hat{W}
  (\tmmathbf{\alpha})$ has the following properties (they compose the angular 
  simplified formulation \textup{\textbf{ASF}}):
  \begin{enumerate}[label=\textup{ASF\arabic*}]
    \item \label{itm:1prop4}$\hat{W} (\tmmathbf{\alpha})$ is analytic in the
    domain $(M^+ \times (M^+ \cup M^-)) \cup ((M^+ \cup M^-) \times M^+)$
    
    \item \label{itm:1primeprop4}We have $\hat{W} (\alpha_1, \alpha_2) =
    \hat{W} (\pi - \alpha_1, \alpha_2)$ and $\hat{W} (\alpha_1, \alpha_2) =
    \hat{W} (\alpha_1, \pi - \alpha_2)$ on $m^+ \times m^+$.
    
    \item \label{itm:2prop4} The function $\hat{U} (\tmmathbf{\alpha}) = -i
    \hat{K} (\tmmathbf{\alpha}) \hat{W} (\tmmathbf{\alpha})$ is analytic on
    $M^- \times M^-$
    
    \item \label{itm:3prop4} The function $\hat{U} (\tmmathbf{\alpha})$
    satisfies the following equation on $m^- \times m^-$
    \begin{eqnarray}
      \hat{U} (\alpha_1, \alpha_2) + \hat{U} (- \pi - \alpha_1, - \pi -
      \alpha_2) & = & \hat{U} (\alpha_1, - \pi - \alpha_2) + \hat{U} (- \pi -
      \alpha_1, \alpha_2)  \label{eq:prop4pt3}
    \end{eqnarray}
    
  \item \label{itm:4prop4}
    There exists a constant $\mu$ such that
    for real $\alpha_{1,0}$, $\alpha_{2,0}$, $\alpha_{1,0}'$, $\alpha_{2,0}'$  
    \[	
	\hat W (\alpha_{1,0} + \Lambda \alpha'_{1,0},\alpha_{2,0} + \Lambda \alpha'_{2,0})
	< \exp \{ \mu \Lambda \}
	\]
    for large enough $\Lambda$. Parameters  
    $\alpha_{1,0}$, $\alpha_{2,0}$, $\alpha_{1,0}'$, $\alpha_{2,0}'$  
    are chosen such that the points $(\alpha_{1,0} + \Lambda \alpha'_{1,0},\alpha_{2,0} +     \Lambda \alpha'_{2,0})$
    fall into the domains of analyticity defined in points \ref{itm:1prop4} and \ref{itm:2prop4}, and 
    $(\alpha_{1,0}')^2 + (\alpha_{2,0}')^2 = 1$. 
  
  \end{enumerate}

  Reciprocally, if $\hat{W} (\tmmathbf{\alpha})$ satisfies these conditions,
  then the associated function $\tilde{W} (\tmmathbf{\xi})$ satisfies the
  simplified functional formulation \textup{\textbf{SFP}}.
\end{proposition}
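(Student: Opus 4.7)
The plan is to verify each \textbf{ASF} condition in turn by carrying the corresponding \textbf{SFP} condition through the conformal change of variables $\xi_j = k\sin\alpha_j$, using the dictionary between the $\xi$ and $\alpha$ geometries established in Tables \ref{table:mapping-summary}--\ref{table:symalphaplane} and Figure \ref{fig:fromxitoalphacontour}. Since $\hat{\Phi}(\boldsymbol{\alpha})=\tilde{\Phi}(k\sin\alpha_1,k\sin\alpha_2)$ is the composition of an analytic function with an entire function, analyticity transfers automatically on preimages; the only non-trivial tasks are (a) identifying which shores of $h^\pm$ correspond to which branches of $m^\pm$ in the additive crossing equation and (b) translating a power growth in $|\boldsymbol{\xi}|$ into exponential growth in $|\boldsymbol{\alpha}|$.

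For \ref{itm:1prop4} and \ref{itm:2prop4}, I would simply invoke the stated fact that the map $\xi = k\sin\alpha$ sends $H^\pm$ biholomorphically to $M^\pm$ (with $\hat{H}^+$ corresponding to $M^+$ via the same map restricted to the uncut upper half-plane), so that the analyticity domain of $\tilde{W}$ in \ref{item:SFP1} pulls back exactly to $(M^+\times(M^+\cup M^-))\cup((M^+\cup M^-)\times M^+)$, and the analyticity domain of $\tilde{U}$ in \ref{item:SFP2} pulls back to $M^-\times M^-$. Condition \ref{itm:1primeprop4} is automatic from the identity $\sin(\pi-\alpha)=\sin\alpha$: for any $\alpha_1\in m^+$, the point $\pi-\alpha_1$ is the reflected point on the other branch of $m^+$ (see Table \ref{table:symalphaplane}) and both are mapped to the same $\xi_1$, so $\hat{W}(\alpha_1,\alpha_2)=\tilde{W}(k\sin\alpha_1,k\sin\alpha_2)=\hat{W}(\pi-\alpha_1,\alpha_2)$, with the analogous statement in the second variable.

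The core step is \ref{itm:3prop4}. By Table \ref{table:mapping-summary}, the left shore $\xi_j^\ell$ of $h^-$ is the image of the lower part of $m^-$, and the right shore $\xi_j^r$ the image of the upper part. By Table \ref{table:symalphaplane}, the involution $\alpha\mapsto-\pi-\alpha$ swaps these two parts of $m^-$. Thus if $\alpha_1,\alpha_2$ lie on the lower part of $m^-$, the four combinations $(\xi_j^{\ell,r})$ correspond respectively to the four arguments $(\alpha_j,-\pi-\alpha_j)$, and substituting into the additive crossing relation of \ref{item:SFP3} yields exactly (\ref{eq:prop4pt3}).

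For \ref{itm:4prop4}, along a ray $\boldsymbol{\alpha}=\boldsymbol{\alpha}_0+\Lambda\boldsymbol{\alpha}_0'$ with $(\alpha_{1,0}')^2+(\alpha_{2,0}')^2=1$ escaping into the analyticity domain, we have $|k\sin\alpha_j|\sim\tfrac{1}{2}e^{\Lambda|\mathrm{Im}\,\alpha_{j,0}'|}$, so \ref{item:SFP4} gives an estimate $|\hat{W}|<\exp(\mu'\Lambda)$ for a suitable real $\mu'$, which is the required form. The converse implication is obtained by reading each argument backwards: given $\hat{W}$ satisfying \textbf{ASF}, define $\tilde{W}(\boldsymbol{\xi})=\hat{W}(\arcsin(\xi_1/k),\arcsin(\xi_2/k))$ with the principal branch of $\arcsin$; the symmetry \ref{itm:1primeprop4} guarantees this is independent of the branch choice and hence well-defined as a single-valued function on the $\xi$-plane, and each of \ref{item:SFP1}--\ref{item:SFP4} then falls out of the corresponding \textbf{ASF} condition by the same dictionary. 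The main obstacle is not a deep analytic one but rather the bookkeeping of shores and branches across the multi-valued map $\xi\mapsto\alpha$, which must be tracked very carefully in order not to misidentify sign conventions in the additive crossing relation.
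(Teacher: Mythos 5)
Your proposal is correct and follows essentially the same route as the paper: transfer \ref{item:SFP1}, \ref{item:SFP2} and \ref{item:SFP4} directly through the conformal map, derive \ref{itm:1primeprop4} from the single-valuedness of $\tilde{W}$ across $h^+$ together with the $\alpha\mapsto\pi-\alpha$ symmetry of $m^+$, and obtain \ref{itm:3prop4} from the shore dictionary $\xi^{\ell}_{j}\leftrightarrow\alpha_j$, $\xi^{r}_{j}\leftrightarrow-\pi-\alpha_j$ on $m^-$. The only cosmetic remark is that calling \ref{itm:1primeprop4} ``automatic from $\sin(\pi-\alpha)=\sin\alpha$'' slightly obscures that the equality of the two shore values is precisely where the analyticity hypothesis \ref{item:SFP1} is used, which the paper makes explicit.
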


\begin{proof}
  The points \ref{itm:1prop4}, \ref{itm:2prop4} and \ref{itm:4prop4}
  are straightforwardly equivalent to the points \ref{item:SFP1}, \ref{item:SFP2} and \ref{item:SFP4} respectively,  since the sets $H^{\pm}$ and the cuts $h^{\pm}$ are sent
  to $M^{\pm}$ and $m^{\pm}$ via the change of variables (\ref{eq:mappingccord}). The point
  \ref{itm:3prop4} comes from the additive property \ref{item:SFP3} satisfied by $\tilde{U}
  (\tmmathbf{\xi})$ and from the mapping and symmetry results summarised in tables \ref{table:mapping-summary}--\ref{table:symalphaplane}.
Indeed, let us consider $\xi^{\ell}$ to belong to the left shore of $h^-$, and let $\alpha$ be its image by the mapping (\ref{eq:mappingccord}) then it is clear that $\alpha$
  belongs to the lower part of $m^-$. Now the symmetry of $m^-$ implies that
  $- \alpha - \pi$ belongs to the upper part of $m^-$ and is hence mapped back by (\ref{eq:mappingccord}) to the point $\xi^r$ belonging to the right shore of $h^-$. Hence $\xi^{\ell}_{1,2} \leftrightarrow \alpha_{1,2}$ and $\xi^r_{1,2}
  \leftrightarrow - \pi - \alpha_{1,2}$ and the additive crossing property \ref{item:SFP3} of
  $\tilde{U} (\tmmathbf{\xi})$ is equivalent to the point \ref{itm:3prop4} for $\hat{U}
  (\tmmathbf{\alpha})$. 

The point \ref{itm:1primeprop4} can be proved
  similarly from the analyticity property \ref{item:SFP1} of $\tilde{W}$. Indeed, let us consider $(\xi_1,\xi_2)\in h^+\times \hat{H}^+$ and refer to $\xi_1^{\ell}$ and $\xi_1^{r}$ as being on the left and right shores of $h^+$ respectively. The function $\tilde{W}$ satisfies $\tilde{W} (\xi_1^{\ell}, \xi_2) = \tilde{W} (\xi_1^r, \xi_2)$, since it
  is analytic on $h^+$ (as a function of $\xi_1$). Let $\alpha_1\in m^+$ and $\alpha_2\in M^+$ be the image of $\xi_1^\ell$ and $\xi_2$ by the mapping (\ref{eq:mappingccord}), it is clear from tables \ref{table:mapping-summary}--\ref{table:symalphaplane} that $\alpha_1$ belongs to the upper part of $m^+$, and that $\pi-\alpha_1$ is mapped back to $\xi_1^r$. We hence have that $\hat{W} (\alpha_1, \alpha_2) = \hat{W} (\pi
  - \alpha_1, \alpha_2)$ on $m^+ \times M^+$. Very Similarly we obtain that $\hat{W} (\alpha_1, \alpha_2) = \hat{W} (\alpha_1, \pi-\alpha_2)$ on $M^+ \times m^+$. And hence by continuity, both equalities are simultaneously valid on $m^+\times m^+$, as required.
\end{proof}

As discussed in {\cite{Assier2018a}}, and as could be anticipated from the
definition of $\tilde{K} (\tmmathbf{\xi})$ for which it is a singular set, the
set
\begin{eqnarray*}
  \gamma^{\star} & = & \left\{ \tmmathbf{\xi} \in \mathbbm{C}^2 \text{ such
  that } \xi_1^2 + \xi_2^2 = k^2 \right\},
\end{eqnarray*}
that we will refer to as the {\tmem{complexified circle}} is very important to
us. Its image by the mapping (\ref{eq:mappingccord}), and hence the singular
set of $\hat{K} (\tmmathbf{\alpha})$, is the set of 2-lines
 $\sigma_n^{\pm}$ defined for any $n \in \mathbbm{Z}$ by
\begin{eqnarray}
  \sigma_n^{\pm} & = & \left\{ \tmmathbf{\alpha} \in \mathbbm{C}^2 \text{ such
  that } \alpha_1 \pm \alpha_2 = \pi / 2 + n \pi \right\}, 
\end{eqnarray}
corresponding to the $\alpha_{1, 2}$ such that $\cos^2 (\alpha_1) = \sin^2
(\alpha_2)$. 
In order to help with the visualisation of such sets, we will
often look at their {\tmem{real trace}}. The real trace of a singular set
$\gamma$ is a set of real curves describing $\gamma \cap \mathbbm{R}^2$. In
Figure~\ref{fig:realtraces}, we show the real traces of $\gamma^{\star}$
(left) and of some of the $\sigma_n^{\pm}$ (right) for real $k$, i.e.\
this is the limiting case ${\rm Im}[k] \to 0$. 

\begin{figure}[h]
  \centering\includegraphics[width=0.4\textwidth]{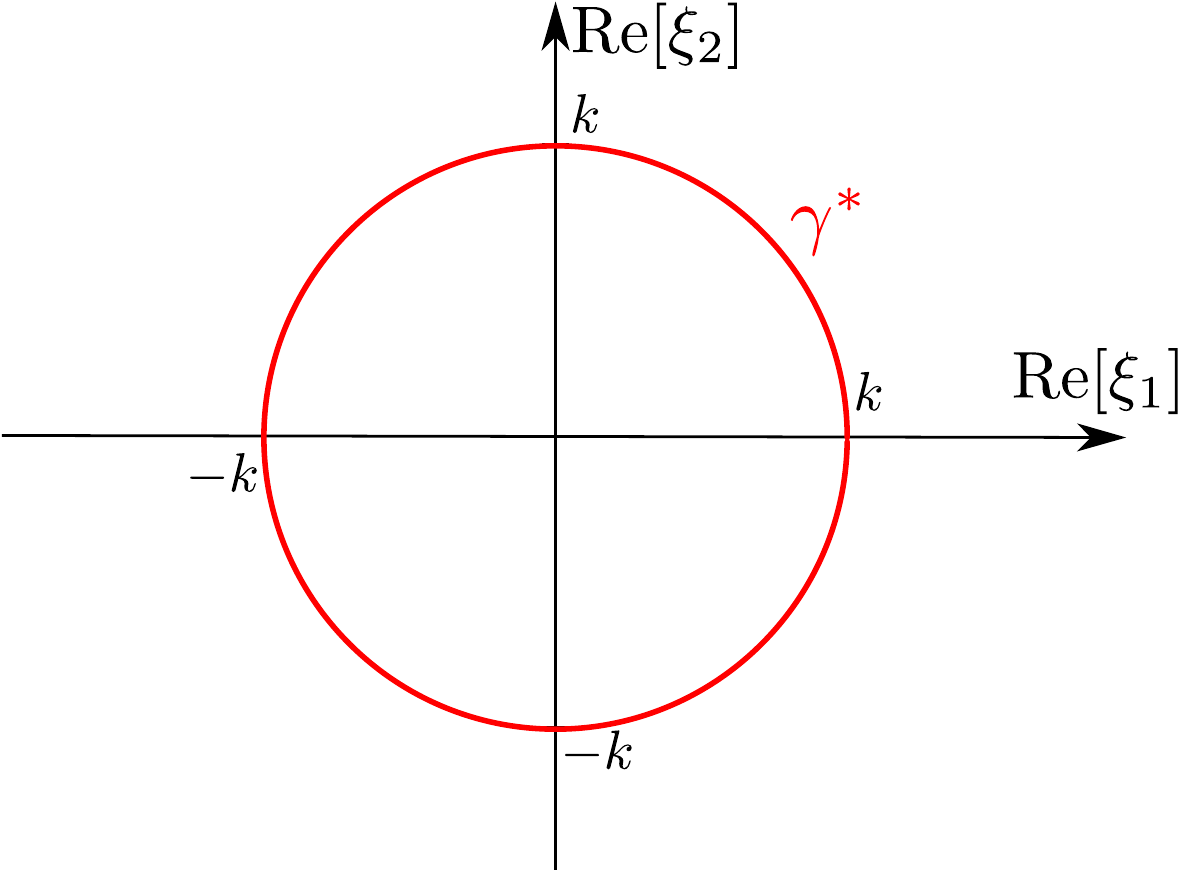}\quad\includegraphics[width=0.4\textwidth]{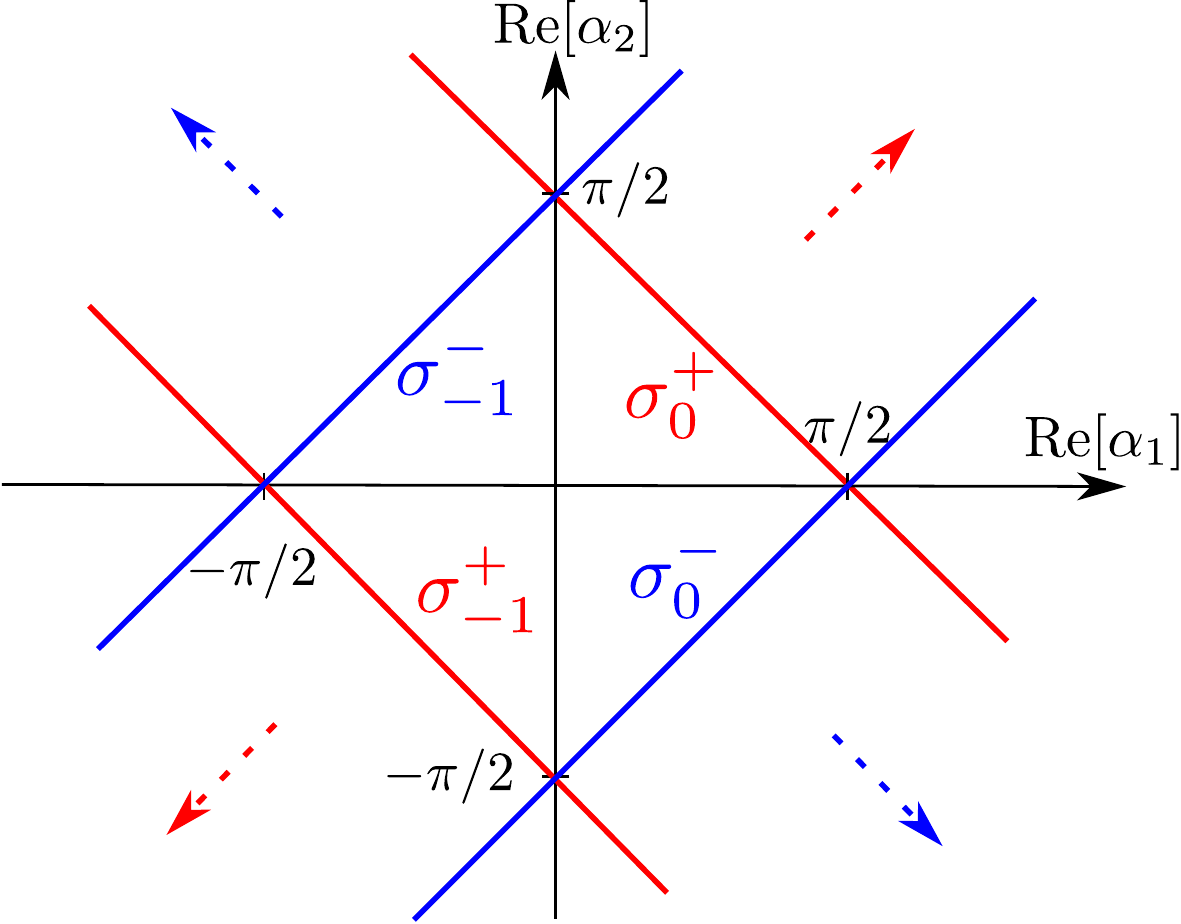}
  \caption{Real traces of $\gamma^{\star}$ (left) and $\sigma_n^{\pm}$ (right)
  }
\label{fig:realtraces}
\end{figure}

Note that the region $\xi_1^2 + \xi_2^2 < k^2$ encircled by the real trace of $\gamma^*$, corresponding to the directivity of 
the field $u$, is mapped onto the square $|\alpha_1| + |\alpha_2| < \pi/2$ shown in figure~\ref{fig:realtraces}, right. 

The point \ref{itm:1primeprop4} of Proposition \ref{prop:angularmodel} is
important since it will allow us to analytically continue the function
$\hat{W} (\tmmathbf{\alpha})$ to a wider domain, as summarised in Proposition
\ref{prop:sqaureroots} below.

\begin{proposition}
  \label{prop:sqaureroots}If $\hat{W} (\tmmathbf{\alpha})$ satisfies the angular simplified
  formulation \textup{\textbf{ASF}} of Proposition \ref{prop:angularmodel}, it can be analytically
  continued to the domain $\Omega \cup \Omega_{+ -} \cup \Omega_{- +} \cup
  \Omega_{- -}$, where
  \begin{eqnarray*}
    \Omega = (M^+ \cup M^-) \times (M^+ \cup M^-) & \tmop{and} & \Omega_{+ -}
    = (M^+ \cup M^-) \times (\pi - (M^+ \cup M^-))\\
    \Omega_{- +} = (\pi - (M^+ \cup M^-)) \times (M^+ \cup M^-) & \tmop{and} &
    \Omega_{- -} = (\pi - (M^+ \cup M^-)) \times (\pi - (M^+ \cup M^-))
  \end{eqnarray*}
  Note that $\hat{W} (\tmmathbf{\alpha})$ will have branch lines within this
  domain at $\sigma_{- 1}^+$, $\sigma_2^+$, $\sigma_1^-$ and $\sigma_{- 2}^-$,
  where it can be represented as a regular function multiplied by a square
  root. In particular, we can show that the function
  \begin{eqnarray*}
    & [(\alpha_1 + \alpha_2 + \pi / 2) (\alpha_1 + \alpha_2 - 5 \pi / 2)
    (\alpha_1 - \alpha_2 - 3 \pi / 2) (\alpha_1 - \alpha_2 + 3 \pi / 2)]^{- 1
    / 2} \hat{W} (\alpha_1, \alpha_2) & 
  \end{eqnarray*}
  is analytic on $\Omega \cup \Omega_{+ -} \cup \Omega_{- +} \cup \Omega_{-
  -}$.
\end{proposition}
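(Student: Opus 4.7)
My plan is to build up $\hat{W}$ on the enlarged domain in stages, invoking each condition of \textbf{ASF} in turn to cover more territory and to control the branching structure. First, point \ref{itm:1prop4} directly delivers analyticity of $\hat{W}$ on three of the four sub-quadrants comprising $\Omega=(M^+\cup M^-)\times(M^+\cup M^-)$, namely $M^+\times M^+$, $M^+\times M^-$, and $M^-\times M^+$. The only sub-quadrant not covered is $M^-\times M^-$, and I would reach it via point \ref{itm:2prop4}: since $\hat{U}=-i\hat{K}\hat{W}$ is analytic there, one may define $\hat{W}:=i\hat{U}/\hat{K}$ on $M^-\times M^-$. Using the trigonometric identity
\begin{equation*}
\cos^2\alpha_1-\sin^2\alpha_2=\cos(\alpha_1+\alpha_2)\cos(\alpha_1-\alpha_2),
\end{equation*}
the reciprocal kernel factorises as $\hat{K}^{-1}(\tmmathbf{\alpha})=k\sqrt{\cos(\alpha_1+\alpha_2)\cos(\alpha_1-\alpha_2)}$, so $\hat{W}$ on $M^-\times M^-$ is the product of an analytic factor (coming from $\hat{U}$) with this square-root factor, whose \emph{a priori} branch 2-lines are the four sets $\sigma_0^\pm$ and $\sigma_{-1}^\pm$ passing through $M^-\times M^-$.

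The core step is then to show that, of these four candidates, only $\sigma_{-1}^+$ is genuinely branched inside $M^-\times M^-$. My argument would be that each of the lines $\sigma_0^+$, $\sigma_0^-$ and $\sigma_{-1}^-$ can be approached from within $M^+\times M^-$ or $M^-\times M^+$, where $\hat{W}$ is already known to be analytic by point \ref{itm:1prop4}. Analytic continuation of $\hat{W}$ along such a 2-line from the adjacent analytic region into $M^-\times M^-$ then forces the candidate branch to close up. The line $\sigma_{-1}^+$, by contrast, does not admit a similar continuation from an $\hat{W}$-analytic neighbour, and it is precisely the additive crossing relation \ref{itm:3prop4} that produces the actual nontrivial monodromy across it. The consistency between the two constructions of $\hat{W}$ (via \ref{itm:1prop4} and via \ref{itm:2prop4}) on their shared interfaces is automatic because $\hat{U}=-i\hat{K}\hat{W}$ holds by continuity on both sides.

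With $\hat{W}$ established on $\Omega$ and branching only on $\sigma_{-1}^+$, I would then use the symmetry \ref{itm:1primeprop4} as a reflection principle to reach $\Omega_{+-}$, $\Omega_{-+}$ and $\Omega_{--}$. The identity $\hat{W}(\alpha_1,\alpha_2)=\hat{W}(\pi-\alpha_1,\alpha_2)$ on $m^+\times m^+$ extends, by analyticity in $\alpha_2$, to the whole of $m^+\times(M^+\cup M^-)$, and this defines the continuation of $\hat{W}$ across $m^+$ in the first variable into $\Omega_{-+}$; the reflection in the second variable yields $\Omega_{+-}$, and the combined reflection yields $\Omega_{--}$. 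Under $\alpha_1\mapsto\pi-\alpha_1$ the line $\sigma_{-1}^+$ (equation $\alpha_1+\alpha_2=-\pi/2$) is sent to $\sigma_1^-$ (equation $\alpha_1-\alpha_2=3\pi/2$); under $\alpha_2\mapsto\pi-\alpha_2$ it is sent to $\sigma_{-2}^-$ (equation $\alpha_1-\alpha_2=-3\pi/2$); and under both reflections combined it is sent to $\sigma_2^+$ (equation $\alpha_1+\alpha_2=5\pi/2$). So the branch 2-lines of $\hat{W}$ in the full extended domain are exactly $\sigma_{-1}^+,\sigma_2^+,\sigma_1^-,\sigma_{-2}^-$. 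Since the polynomial $(\alpha_1+\alpha_2+\pi/2)(\alpha_1+\alpha_2-5\pi/2)(\alpha_1-\alpha_2-3\pi/2)(\alpha_1-\alpha_2+3\pi/2)$ has simple zeros on precisely these four 2-lines and nowhere else in the extended domain, dividing $\hat{W}$ by its square root strips out the branching and leaves an analytic function, as claimed.

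The hard part, I expect, is the branch-elimination step in $M^-\times M^-$. The analytic-continuation argument from adjacent sub-quadrants is geometrically natural, but must be carried out carefully because the sets $M^\pm$ are themselves cut (along $m^\pm$) and carry a nontrivial Riemann-surface structure in the $\alpha$-plane; one must verify that the continuation along each of the three spurious 2-lines is genuinely unobstructed, and that the additive crossing \ref{itm:3prop4} singles out $\sigma_{-1}^+$ in a way compatible with the reflection symmetry \ref{itm:1primeprop4} used in the subsequent step to generate the images $\sigma_1^-$, $\sigma_{-2}^-$ and $\sigma_2^+$.
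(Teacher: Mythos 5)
Your overall skeleton matches the paper's: point \ref{itm:1prop4} covers the three sub-quadrants $M^+\times M^+$, $M^+\times M^-$, $M^-\times M^+$; point \ref{itm:2prop4} and the formula $\hat{W}=i\hat{U}/\hat{K}$ handle $M^-\times M^-$; and the reflections of \ref{itm:1primeprop4}, whose images of $\sigma_{-1}^+$ you compute correctly as $\sigma_1^-$, $\sigma_{-2}^-$ and $\sigma_2^+$, extend $\hat{W}$ to $\Omega_{-+}$, $\Omega_{+-}$ and $\Omega_{--}$ exactly as in the paper. The gap is in what you yourself flag as the core step. You assert that the four lines $\sigma_0^{\pm}$ and $\sigma_{-1}^{\pm}$ all pass through $M^-\times M^-$, and then propose to ``close up'' three of them by continuing $\hat{W}$ along each 2-line from an adjacent sub-quadrant where \ref{itm:1prop4} gives analyticity. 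That mechanism cannot work: on $M^-\times M^-$ one has $\hat{W}=ik\,\hat{U}\,[\cos(\alpha_1+\alpha_2)\cos(\alpha_1-\alpha_2)]^{1/2}$ with $\hat{U}$ holomorphic there, so at any point of $M^-\times M^-$ actually lying on one of these lines at which $\hat{U}$ does not vanish, $\hat{W}$ necessarily carries a genuine square-root branch; no continuation argument from a neighbouring region can remove it. Your elimination step would only succeed if $\hat{U}$ vanished identically on those lines inside $M^-\times M^-$, which nothing in the hypotheses of \textbf{ASF} guarantees. (A smaller inaccuracy: the branching on $\sigma_{-1}^+$ comes from \ref{itm:2prop4} together with the singularity of $\hat{K}$, not from the additive crossing condition \ref{itm:3prop4}, which enters only later in the stencil derivation.)

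The paper's resolution is purely geometric, and that geometric fact is what you need to verify instead of a removability argument: the curved strips $M^{\pm}$ are positioned so that $\sigma_{-1}^+$ is the \emph{only} component of the singular set of $\hat{K}$ that meets $M^-\times M^-$ at all. The other lines you list meet $\Omega$ only inside the region already covered by the holomorphy statement \ref{itm:1prop4}, where $\hat{W}$ is never obtained by dividing by $\hat{K}$ and hence no branching question arises (it is $\hat{U}$, not $\hat{W}$, that is singular there). Once this single fact is read off from the definition of $M^{\pm}$ in figure \ref{fig:fromxitoalphacontour}, the factor $(\alpha_1+\alpha_2+\pi/2)^{-1/2}$ removes the only branching on $\Omega$, and your reflection step then completes the proof as in the paper.
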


\begin{proof}
  Points~\ref{itm:1prop4} and~\ref{itm:2prop4} of 
  Proposition~\ref{prop:angularmodel} provide some information on the behaviour of 
  $\hat W$ in the domain $\Omega$. Namely, \ref{itm:1prop4} states that 
  $\hat W$ is holomorphic in 
  $(M^+ \times (M^+ \cup M^-)) \cup ((M^+ \cup M^-) \times M^+ )$, 
  while \ref{itm:2prop4} states that $\hat W$ has a branch 2-line
  in $M^- \times M^-$, and this 2-line is $\sigma^+_{-1}$. The reasons for this being that on $M^-\times M^-$, we have $\hat{W}=i\hat{U}/\hat{K}$, $\hat{U}$ being analytic, and $\sigma^+_{-1}$ being the only part of the singularity set of $\hat{K}(\boldsymbol{\alpha})$ that belongs to $M^-\times M^-$.

Another way to see this is that according to \ref{itm:2prop4} and the definition of $\hat{K}$, the product 
  \[
  (\alpha_1 + \alpha_2 + \pi /2)^{-1/2} \hat W (\alpha_1, \alpha_2)
  \]
  is holomorphic in $M^- \times M^-$. Hence, allowing for this branch 2-line, we can analytically continue $\hat{W}$ onto $M^-\times M^-$. This means that overall we have an analytic continuation on 
\[
(M^+ \times (M^+ \cup M^-)) \cup ((M^+ \cup M^-) \times M^+ ) \cup (M^-\times M^-),
\]
which can easily be seen to be equal to $\Omega$.

The point~\ref{itm:1primeprop4} can now be used to continue $\hat W$ into the domain $\Omega_{-+}$. In order to do so just pick a point in $\Omega_{-+}$, and construct the analytic continuation of $\hat{W}$ to this point as follows. Start from within $\Omega$, at $(0,0)$ say, move to $m^+\times m^+$, and use the first formula to analytically continue $\hat W$ past the $m^+$ border of the $\alpha_1$ plane until reaching the sought-after point in $\pi-(M^+\cup M^-)$. Analytical continuation to $\Omega_{+-}$ can be done similarly by using the second formula of \ref{itm:1primeprop4}, while for the analytical continuation to $\Omega_{--}$, both formulae in \ref{itm:1primeprop4} should be used simultaneously. The corresponding reflections of the 
  branch 2-line $\sigma^+_{-1}$ are $\sigma^-_1$, $\sigma^-_{-2}$, and $\sigma^+_2$. They are also branch 2-lines of the analytical continuation of $\hat{W}$ and are of the same type as $\sigma^+_{-1}$.  
\end{proof}




\subsection{The stencil equation}
\label{sec:stencilformulation}


The function $\hat K (\alpha_1 , \alpha_2)$ is branching in the domain of 
two complex variables $(\alpha_1, \alpha_2)\in\mathbb{C}^2$, moreover, its branch 2-lines 
$\sigma_n^{\pm}$ have one dimensional real traces on the real plane $(\alpha_1, \alpha_2)\in\mathbb{R}^2$ as can be seen in figure \ref{fig:realtraces}.
Let us fix the value of this function on the real plane. In order to do this, we will use the {\em bridge and arrow\/} notations defined and developed in Appendix~\ref{app:appA}.

The starting point of fixing the value of $\hat K$ is to find its value at 
$(\alpha_1 , \alpha_2) = (0,0)$, or, the same, the value
$\tilde K(0,0)$. As it is clear from the form of   
(\ref{eq:physicalfieldintegralxi}), one should choose 
\begin{equation}
\hat K(0,0) = k^{-1}
\label{eq:sheetfixing1}
\end{equation}
to get the plane waves outgoing for $x_3 \to \infty$. 

The integration surface in the field reconstruction formula 
(\ref{eq:physicalfieldintegralxi}) is the real plane $(\xi_1, \xi_2)\in\mathbb{R}^2$.
For ${\rm Im}[k] \to 0$ the real square $\boldsymbol{\xi}\in[-k,k]^2$ is mapped onto the 
square $\boldsymbol{\alpha}\in[-\tfrac{\pi}{2},\tfrac{\pi}{2}]^2$ by (\ref{eq:mappingccord}). This square in the real $(\alpha_1,\alpha_2)$ plane is denoted by $\boldsymbol{\Gamma'}$ and shown in grey in figures~\ref{fig:stencilalpha}~and~\ref{fig04}. Let us fix the way in which the integration surface bypasses the singularities. The proper bypasses are shown in figure~\ref{fig04}.

The bypass symbol introduced in this way enables one to define the value 
of $\hat K(-\tfrac{\pi}{2},-\tfrac{\pi}{2})$. Namely, this value is equal to $-i k^{-1}$.
Hence, remembering that $(-\tfrac{\pi}{2},-\tfrac{\pi}{2})\in m^- \times m^-$, the values of $\hat K$ can be found on the whole set 
$m^- \times m^-$ by continuity. Thus, the link $\hat U = \hat K \hat W$ becomes clarified on 
$m^- \times m^-$. 

Because of the inherent symmetry of $m^-$ given in table \ref{table:symalphaplane} and the definition of $\hat{K}$, the following identities are valid on $m^- \times m^-$:
\begin{equation}
\hat K(\alpha_1 , \alpha_2) = \hat K (-\pi - \alpha_1 , \alpha_2)
= \hat K (\alpha_1 , - \pi - \alpha_2) = \hat K (-\pi - \alpha_1 , -\pi - \alpha_2).
\label{eq:sheetfixing2}
\end{equation}
Thus, from point~\ref{itm:3prop4} of Proposition~\ref{prop:angularmodel} and the definition of $\hat{U}$,
it follows that, on $m^-\times m^-$, we have 
\begin{equation}
\hat{W} (\alpha_1, \alpha_2) + 
\hat{W} (- \pi - \alpha_1, - \pi - \alpha_2) 
 =  
\hat{W} (\alpha_1, - \pi - \alpha_2) + 
\hat{W} (- \pi -\alpha_1, \alpha_2).  
\label{eq:sheetfixing3}
\end{equation}
 
Now, remembering that \ref{itm:1primeprop4} was used in order to continue the function $\hat W$ analytically, 
thus it remains valid for the continued function $\hat W$. Namely, the condition 
\begin{equation}
\hat W (\alpha_1 , \alpha_2) = \hat W(\pi - \alpha_1, \alpha_2)
\label{eq:sheetfixing4}
\end{equation}
links the values of $\hat W$ on $m^- \times m^-$ with the values 
on $(2\pi + m^-) \times m^-$. Similarly, 
the condition  
\begin{equation}
\hat W (\alpha_1 , \alpha_2) = \hat W(\alpha_1, \pi - \alpha_2)
\label{eq:sheetfixing5}
\end{equation}
links the values  
on $m^- \times m^-$ with the values on  
$m^- \times (2\pi + m^-)$. 
Finally, using these two relations, one can obtain the relation
\begin{equation}
\hat W (\alpha_1 , \alpha_2) = \hat W(\pi -\alpha_1, \pi - \alpha_2)
\label{eq:sheetfixing6}
\end{equation}
linking $m^- \times m^-$ with $(2\pi +m^-) \times (2\pi + m^-)$. However, these three links should be clarified, since the function
$\hat W$ is branching. 

Note that the function $\hat W$ is single-valued on the sets 
$m^- \times m^-$,  
$(2\pi + m^-) \times m^-$,
$m^- \times (2\pi + m^-)$,  
$(2\pi +m^-) \times (2\pi + m^-)$, since these do not intersect the branch 2-lines. Thus, one should find four reference points linked by the aforementioned relations, and then continue the relations by continuity. 
These points can be easily found by taking into account that
the point $(\tfrac{\pi}{2} , \tfrac{\pi}{2})$ belongs to the physical sheet of $\hat W$. They are the points 
$(-\tfrac{\pi}{2}, -\tfrac{\pi}{2})$, $(- \tfrac{\pi}{2}, \tfrac{3\pi}{2})$, $(\tfrac{3\pi}{2}, -\tfrac{\pi}{2})$,
$(\tfrac{3\pi}{2} , \tfrac{3\pi}{2})$ shown in figure~\ref{fig:stencilalpha} and belonging respectively to $m^- \times m^-$, $(2\pi + m^-) \times m^-$, $m^- \times (2\pi + m^-)$ and $(2\pi +m^-) \times (2\pi + m^-)$. 
The figure~\ref{fig:stencilalpha} shows the paths along which one can reach the
reference points from the point $(\tfrac{\pi}{2} , \tfrac{\pi}{2})$ belonging to the 
physical sheet.  

\begin{figure}[h]
  \centering\includegraphics[width=0.6\textwidth]{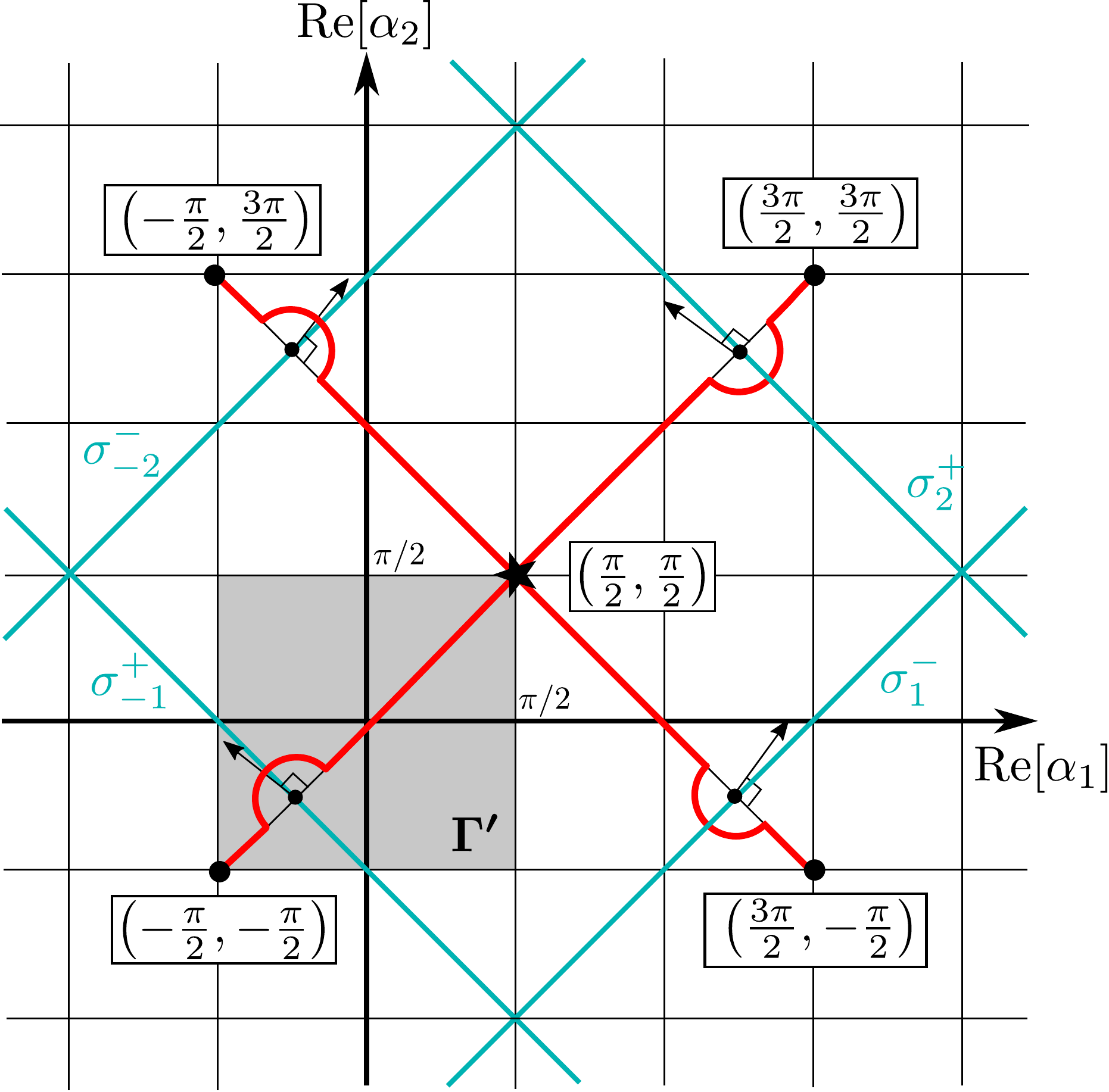}
  \caption{Reference points used for the derivation of the stencil equation
  (\ref{eq:stencil})}
\label{fig:stencilalpha}
\end{figure} 

Let us now combine the additive branching relation (\ref{eq:sheetfixing3})
with the analyticity relations (\ref{eq:sheetfixing4}),
(\ref{eq:sheetfixing5}), (\ref{eq:sheetfixing6}).
As a result, we obtain the so-called \emph{stencil equation} 
\begin{equation}
      \hat{W} (\alpha_1, \alpha_2) + \hat{W} (2 \pi + \alpha_1, 2 \pi +
      \alpha_2) - \hat{W} (\alpha_1, 2 \pi + \alpha_2) - \hat{W} (2 \pi +
      \alpha_1, \alpha_2)  =  0,  \label{eq:stencil}
\end{equation} 
valid for $(\alpha_1 , \alpha_2) \in m^- \times m^-$. 

The values of $\hat W$ in (\ref{eq:stencil}) are chosen by continuity from the reference points shown in figure~\ref{fig:stencilalpha}. Therefore, we obtain the \emph{stencil formulation} (\textbf{StF}) of the angular functional formulation \textbf{ASF}, as summarised in following proposition:

\begin{proposition}
  \label{prop:stencilform}If $\hat{W} (\tmmathbf{\alpha})$ is a function that
  satisfies the angular formulation \textup{\textbf{ASF}} of Proposition \ref{prop:angularmodel},
  then it has the following properties (they compose the stencil formulation \textup{\textbf{StF}}):
  \begin{enumerate}[label=\textup{StF\arabic*}]
    \item \label{itm:1prop6}$[(\alpha_1 + \alpha_2 + \pi / 2) (\alpha_1 +
    \alpha_2 - 5 \pi / 2) (\alpha_1 - \alpha_2 - 3 \pi / 2) (\alpha_1 -
    \alpha_2 + 3 \pi / 2)]^{- 1 / 2} \hat{W} (\alpha_1, \alpha_2)$ is analytic
    on $\Omega \cup \Omega_{+ -} \cup \Omega_{- +} \cup \Omega_{- -}$.
    
    \item \label{itm:2prop6}$\hat{W} (\tmmathbf{\alpha})$ obeys the stencil equation (\ref{eq:stencil}) on $m^- \times m^-$.
    
    \item \label{itm:3prop6}$\hat{W} (\tmmathbf{\alpha})$ obeys the relations
    (\ref{eq:sheetfixing4}),
    (\ref{eq:sheetfixing5}), (\ref{eq:sheetfixing6}) for 
    $(\alpha_1 , \alpha_2) \in m^- \times m^-$.    
        
	\item \label{itm:4prop6} $\hat{W}(\boldsymbol{\alpha})$ satisfies the growth condition \ref{itm:4prop4}.
  \end{enumerate}
  Reciprocally, if a function satisfies the stencil formulation \textup{\textbf{StF}}, then it satisfies
  the angular formulation \textup{\textbf{ASF}}.
\end{proposition}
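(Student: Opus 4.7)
The plan is to prove the proposition by packaging the derivations already carried out just before its statement and then verifying that each step can be inverted to obtain the reciprocal implication. Most of the work has, in fact, been laid out in the preceding paragraphs, so the task is mainly to tie the pieces together carefully while tracking the branch structure via the reference points in figure~\ref{fig:stencilalpha}.

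First I would treat the forward direction \textbf{ASF} $\Rightarrow$ \textbf{StF}. Property \ref{itm:1prop6} is just Proposition~\ref{prop:sqaureroots}: starting from the analyticity properties \ref{itm:1prop4}, \ref{itm:2prop4} and the symmetry relation \ref{itm:1primeprop4}, we have already extended $\hat W$ to $\Omega\cup\Omega_{+-}\cup\Omega_{-+}\cup\Omega_{--}$ and identified its branch 2-lines as $\sigma^+_{-1},\sigma^-_1,\sigma^-_{-2},\sigma^+_2$, which is precisely encoded in the square-root prefactor. Property \ref{itm:4prop6} is literally \ref{itm:4prop4}, and properties \ref{itm:3prop6} come from analytically continuing the identities \ref{itm:1primeprop4} (valid a priori on $m^+\times m^+$) to $m^-\times m^-$, using the continuation performed in the proof of Proposition~\ref{prop:sqaureroots} together with the choice of reference points $(\pm\pi/2\text{ or }3\pi/2,\pm\pi/2\text{ or }3\pi/2)$ shown in figure~\ref{fig:stencilalpha} to fix the relevant sheets of the multivalued function $\hat W$.

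The only genuinely new statement is the stencil equation \ref{itm:2prop6}. For this I would follow exactly the derivation presented before the proposition: fix the value of $\hat K$ on the physical sheet via (\ref{eq:sheetfixing1}) and the bridge-and-arrow bypass prescription, deduce the symmetries (\ref{eq:sheetfixing2}) of $\hat K$ on $m^-\times m^-$, and use them to rewrite the additive crossing identity \ref{itm:3prop4} (stated for $\hat U$) as the corresponding identity for $\hat W$, namely (\ref{eq:sheetfixing3}). Combining (\ref{eq:sheetfixing3}) with (\ref{eq:sheetfixing4})--(\ref{eq:sheetfixing6}) yields (\ref{eq:stencil}) provided the four shifted values are correctly interpreted on the same sheet; this is where the reference points of figure~\ref{fig:stencilalpha} are crucial, since they anchor each of the four sets $m^-\times m^-$, $(2\pi+m^-)\times m^-$, $m^-\times(2\pi+m^-)$ and $(2\pi+m^-)\times(2\pi+m^-)$ back to the physical sheet.

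For the reciprocal direction \textbf{StF} $\Rightarrow$ \textbf{ASF}, I would argue as follows. Property \ref{itm:1prop6} contains \ref{itm:1prop4} (the domain $(M^+\times(M^+\cup M^-))\cup((M^+\cup M^-)\times M^+)$ is free of the branch 2-lines carried by the square-root prefactor, so $\hat W$ is itself holomorphic there) and also \ref{itm:2prop4} (on $M^-\times M^-$ only $\sigma^+_{-1}$ is present, which is exactly the branching of $\hat K$; hence $\hat U = -i\hat K\hat W$ loses the square root and is holomorphic). The growth condition \ref{itm:4prop4} is \ref{itm:4prop6}. The symmetry \ref{itm:1primeprop4} is obtained by restricting (\ref{eq:sheetfixing4})--(\ref{eq:sheetfixing5}) from their continued domain back to $m^+\times m^+$ via the same continuation paths. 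Finally, the additive crossing relation \ref{itm:3prop4} is recovered by reversing the derivation of \ref{itm:2prop6}: on $m^-\times m^-$ the symmetries (\ref{eq:sheetfixing2}) of $\hat K$ convert (\ref{eq:stencil}) combined with (\ref{eq:sheetfixing4})--(\ref{eq:sheetfixing6}) into (\ref{eq:prop4pt3}).

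The main obstacle I anticipate is bookkeeping rather than mathematics: every shifted value of $\hat W$ in (\ref{eq:stencil}) lives on a potentially different sheet of the Riemann manifold, and one must ensure that the continuation paths chosen in the forward direction are exactly those used when inverting, so that no spurious branch change creeps in. Formalising this requires systematic use of the bridge-and-arrow notation of Appendix~\ref{app:appA} together with the reference-point argument; once that is in place, the equivalence is essentially a tautological reshuffling of the four relations (\ref{eq:sheetfixing3})--(\ref{eq:sheetfixing6}).
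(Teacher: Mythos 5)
Your proposal is correct and follows essentially the same route as the paper, which presents no separate proof environment for this proposition but instead derives it in the body of Section~\ref{sec:stencilformulation}: fixing $\hat K$ on the physical sheet, converting the additive crossing of $\hat U$ into (\ref{eq:sheetfixing3}) via (\ref{eq:sheetfixing2}), combining with (\ref{eq:sheetfixing4})--(\ref{eq:sheetfixing6}) anchored at the reference points of figure~\ref{fig:stencilalpha}, and invoking Proposition~\ref{prop:sqaureroots} for the analyticity statement. Your additional care with the reciprocal direction and the sheet bookkeeping is consistent with, and slightly more explicit than, what the paper records.
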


\subsection{Separation of variables for the stencil equation}
\label{sec:seprarablestencil}

Let us now focus on finding some solutions obeying the stencil functional problem \textbf{StF} formulated
above in Proposition~\ref{prop:stencilform}. 
Note that we are not trying to build a general solution of the stencil functional problem. Instead, we are building a basis of partial solutions, and later on, 
by comparison with the standard separation of variables for a quarter-plane, the way of constructing a general solution will become clear.

We will
start by concentrating solely on items \ref{itm:1prop6} and \ref{itm:2prop6},
that is the domain of analyticity of $\hat{W}$ and the stencil equation
(\ref{eq:stencil}). Then we will prove that the obtained solutions obey
{\tmem{naturally}} the symmetry conditions of the third item.

Let us take a somewhat similar approach to that of separation of variables.
Since the singularities of $\hat{W}$ are located along the lines
$\sigma^{\pm}_n$, it seems natural to change the variables from
$\tmmathbf{\alpha}= (\alpha_1, \alpha_2)$ to $\tmmathbf{\beta}= (\beta_1,
\beta_2)$ defined by
\begin{eqnarray}
  \beta_1 \equiv \alpha_1 + \alpha_2 & \tmop{and} & \beta_2 \equiv \alpha_1 -
  \alpha_2 .  \label{eq7008}
\end{eqnarray}
The possible singularities are now given by the lines
\begin{eqnarray*}
  \sigma^+_n : \beta_1 = \pi / 2 + \pi n & \tmop{and} & \sigma^-_n : \beta_2 =
  \pi / 2 + \pi n,
\end{eqnarray*}
and we can consider the new unknown function $W^{\dagger} (\tmmathbf{\beta})$
defined by
\[ W^{\dag} (\beta_1, \beta_2) = \hat{W} ((\beta_1 + \beta_2) / 2, (\beta_1 -
   \beta_2) / 2) . \]
Using this change of variables, the stencil equation (\ref{eq:stencil})
becomes 
\begin{align}
  W^{\dag}  (\beta_1, \beta_2) + W^{\dag}  (\beta_1 + 4 \pi, \beta_2) -
  W^{\dag}  (\beta_1 + 2 \pi, \beta_2 - 2 \pi) - W^{\dag}  (\beta_1 + 2 \pi,
  \beta_2 + 2 \pi) & =  0,  \label{eq7009}
\end{align}
where $\tmmathbf{\beta}$ belongs to the image of $m^- \times m^-$ by
(\ref{eq7008}) and the point connections and singularities bypasses can be
visualised in a similar way as in figure~\ref{fig:stencilalpha}, but this time
in the real $\beta_1, \beta_2$ plane, as can be seen in figure~\ref{fig:stencilbeta}. An important step of our consideration will be to
assume that (\ref{eq7009}) can be continued to all $(\beta_1, \beta_2)$
belonging to the Riemann manifold over $\mathbb{C}^2$ associated to
$W^{\dagger}$.

\begin{figure}[h]
  \centering\includegraphics[width=0.6\textwidth]{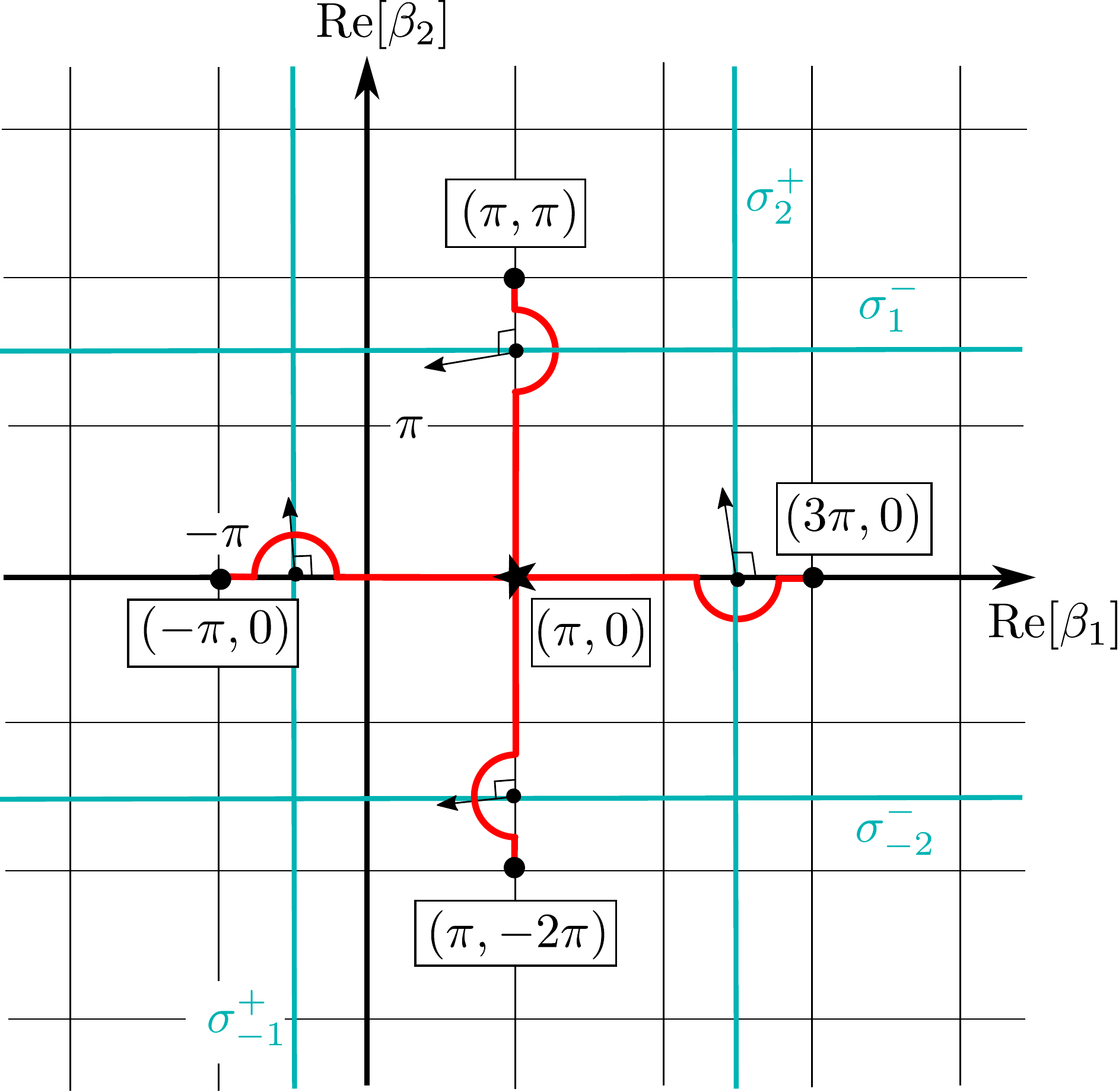}
  \caption{Points connection and singularities bypasses for the
  $\tmmathbf{\beta}$ stencil equation (\ref{eq7009})}
\label{fig:stencilbeta}
\end{figure}

Now, upon dividing (\ref{eq7009}) through by $W^{\dagger} (\beta_1 + 2 \pi,
\beta_2)$, we get
\begin{align}
  \frac{W^{\dag}  (\beta_1, \beta_2) + W^{\dag}  (\beta_1 + 4 \pi,
  \beta_2)}{W^{\dagger} (\beta_1 + 2 \pi, \beta_2)} & =  \frac{W^{\dag} 
  (\beta_1 + 2 \pi, \beta_2 - 2 \pi) + W^{\dag}  (\beta_1 + 2 \pi, \beta_2 + 2
  \pi)}{W^{\dagger} (\beta_1 + 2 \pi, \beta_2)},  \label{eq7010}
\end{align}
and looking for separated solutions of the form
\begin{align}
  W^{\dagger} (\beta_1, \beta_2) & =  \Theta (\beta_1) \Psi (\beta_2), 
  \label{eq7011}
\end{align}
a standard separation of variables argument implies that 
\begin{align}
  \Theta (\beta_1) + \Theta (\beta_1 + 4 \pi) - \lambda \Theta (\beta_1 + 2
  \pi) & =  0,  \label{eq7012}\\
  \Psi (\beta_2 - 2 \pi) + \Psi (\beta_2 + 2 \pi) - \lambda \Psi (\beta_2) & =
   0,  \label{eq:7013}
\end{align}
where $\lambda$ is a separation constant. We can hence look for solutions $W^{\dagger}(\boldsymbol{\beta};\lambda)$ of the form
$W^{\dagger} (\beta_1, \beta_2 ; \lambda) = \Theta (\beta_1 ; \lambda) \Psi
(\beta_2 ; \lambda)$, where $\Theta (\beta_1 ; \lambda)$ and $\Psi (\beta_2,
\lambda)$ satisfy (\ref{eq7012}) and (\ref{eq:7013}) respectively \RED{and are such that $\Theta$ (resp. $\Psi$) is analytic on the strip $-\pi \leq \text{Re}[\beta_1] \leq 3\pi$ (resp. $-\pi \leq \text{Re}[\beta_2] \leq 3\pi$), except for two branch points at $\beta_1=-\pi/2,5\pi/2$ (resp. $\beta_2=\pm3\pi/2$).}

Remember that this has been obtained solely from considering the items
\ref{itm:1prop6} and \ref{itm:2prop6} of the stencil equation formulation of
Proposition \ref{prop:stencilform}. Let us now focus on the item
\ref{itm:3prop6}. Under the $\boldsymbol{\beta}$ formulation, this item can be rewritten
as
\begin{align}
  W^{\dagger} (\pi - \beta_2, \pi - \beta_1 ; \lambda) & =  W^{\dagger}
  (\beta_1, \beta_2 ; \lambda),  \label{eq:symeq1beta}\\
  W^{\dagger} (\beta_2 + \pi, \beta_1 - \pi ; \lambda) & =  W^{\dagger}
  (\beta_1, \beta_2 ; \lambda) ,  \label{eq:symeq2beta}
\end{align}
and should be valid on the image of $m^-\times m^-$ by (\ref{eq7008}). Conveniently, these symmetry conditions arise \textit{naturally} from the separated form of our solution when making certain assumptions summarised in the following lemma.
\begin{lemma}
  \label{lem:lem7}The symmetry equations (\ref{eq:symeq1beta}) and
  (\ref{eq:symeq2beta}) are automatically satisfied provided that
  \begin{align}
    \Theta (\beta + \pi ; \lambda) & =  \Psi (\beta ; \lambda), 
    \label{eq:cond1lem7}\\
    \Psi (- \beta ; \lambda) & =  \Psi (\beta ; \lambda) . 
    \label{eq:cond2lem7}
  \end{align}
  Moreover, these two conditions imply that
  \begin{align}
    \Theta (\beta ; \lambda) & =  \Theta (2 \pi - \beta ; \lambda) . 
    \label{eq:cond3lem7}
  \end{align}
\end{lemma}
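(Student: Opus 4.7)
The plan is to verify both symmetry relations by direct substitution of the separated ansatz $W^{\dagger}(\beta_1,\beta_2;\lambda) = \Theta(\beta_1;\lambda)\Psi(\beta_2;\lambda)$, using (\ref{eq:cond1lem7}) and (\ref{eq:cond2lem7}) as rewriting rules. Since these two hypotheses translate $\Theta$-values into $\Psi$-values and implement an even-reflection symmetry on $\Psi$, one should expect all the required relations to follow from purely algebraic manipulations; there is no real obstacle, only a short chain of substitutions to carry out in the correct order.

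The first step is to establish the auxiliary identity (\ref{eq:cond3lem7}). Starting from $\Theta(2\pi-\beta;\lambda)$, I would use (\ref{eq:cond1lem7}) in the form $\Theta(\gamma;\lambda) = \Psi(\gamma-\pi;\lambda)$ to rewrite it as $\Psi(\pi-\beta;\lambda)$, then apply the evenness (\ref{eq:cond2lem7}) to obtain $\Psi(\beta-\pi;\lambda)$, and finally reapply (\ref{eq:cond1lem7}) to land on $\Theta(\beta;\lambda)$. This yields $\Theta(\beta;\lambda) = \Theta(2\pi-\beta;\lambda)$, so $\Theta$ inherits an evenness about $\beta=\pi$.

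With the three identities (\ref{eq:cond1lem7}), (\ref{eq:cond2lem7}), (\ref{eq:cond3lem7}) in hand, I would verify (\ref{eq:symeq1beta}) by writing $W^{\dagger}(\pi-\beta_2,\pi-\beta_1;\lambda) = \Theta(\pi-\beta_2;\lambda)\Psi(\pi-\beta_1;\lambda)$, then applying (\ref{eq:cond1lem7}) followed by (\ref{eq:cond2lem7}) to the first factor to get $\Psi(\beta_2;\lambda)$, and applying (\ref{eq:cond1lem7}) followed by (\ref{eq:cond3lem7}) to the second factor to get $\Theta(\beta_1;\lambda)$. Their product reassembles into $W^{\dagger}(\beta_1,\beta_2;\lambda)$, as required.

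Finally, for (\ref{eq:symeq2beta}), I would compute $W^{\dagger}(\beta_2+\pi,\beta_1-\pi;\lambda) = \Theta(\beta_2+\pi;\lambda)\Psi(\beta_1-\pi;\lambda)$, where the first factor equals $\Psi(\beta_2;\lambda)$ by direct application of (\ref{eq:cond1lem7}) and the second factor equals $\Theta(\beta_1;\lambda)$ by the same identity read in reverse. Again the product equals $W^{\dagger}(\beta_1,\beta_2;\lambda)$, concluding the lemma. The only thing worth checking beyond the algebra is that the points where these identities are applied lie in a common domain of analyticity of $\Theta$ and $\Psi$ (the strips stated just before the lemma), so that the continuation of the separated ansatz to the image of $m^-\times m^-$ under (\ref{eq7008}) is legitimate.
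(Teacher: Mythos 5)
Your proposal is correct and follows essentially the same route as the paper's proof: a direct chain of substitutions of (\ref{eq:cond1lem7}) and (\ref{eq:cond2lem7}) into the separated ansatz (\ref{eq7011}). The only (immaterial) difference is that you derive (\ref{eq:cond3lem7}) first and use it as an intermediate step for (\ref{eq:symeq1beta}), whereas the paper verifies both symmetry equations directly from (\ref{eq:cond1lem7})--(\ref{eq:cond2lem7}) and records (\ref{eq:cond3lem7}) at the end.
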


\begin{proof}
  In this proof, for brevity, we will drop the $\lambda$ dependency as it will
  be assumed throughout. Let us start by proving that $\left(
  \ref{eq:symeq1beta} \right)$ is satisfied, by first noticing that according
  to (\ref{eq7011}), we have
  \begin{align}
    W^{\dagger} (\beta_1, \beta_2) & =  \Theta (\beta_1) \Psi (\beta_2) , 
    \label{eq:lem71}\\
    W^{\dagger} (\pi - \beta_2, \pi - \beta_1) & =  \Theta (\pi - \beta_2)
    \Psi (\pi - \beta_1) . \label{eq:lem72}
  \end{align}
  All we need to do now is to prove that the RHS of (\ref{eq:lem71}) and
  (\ref{eq:lem72}) are equal by using the conditions (\ref{eq:cond1lem7}) and
  (\ref{eq:cond2lem7}):
  \begin{eqnarray*}
    & \Theta (\pi - \beta_2) \Psi (\pi - \beta_1) \underset{\left(
    \ref{eq:cond1lem7} \right)}{=} \Psi (- \beta_2) \Psi (\pi - \beta_1)
    \underset{\left( \ref{eq:cond2lem7} \right)}{=} \Psi (\beta_2) \Psi
    (\beta_1 - \pi) \underset{\left( \ref{eq:cond1lem7} \right)}{=} \Psi
    (\beta_2) \Theta (\beta_1) , & 
  \end{eqnarray*}
  as required. In order to get (\ref{eq:symeq2beta}), notice that by
  (\ref{eq7011}), we have
  \begin{align}
    W^{\dagger} (\beta_2 + \pi, \beta_1 - \pi) & =  \Theta (\beta_2 + \pi)
    \Psi (\beta_1 - \pi),  \label{eq:lem73}
  \end{align}
  hence all we need to do is to show that the RHS of (\ref{eq:lem71}) and
  (\ref{eq:lem73}) are equal to each other by using to conditions
  (\ref{eq:cond1lem7}) and (\ref{eq:cond2lem7}):
  \begin{eqnarray*}
    & \Theta (\beta_2 + \pi) \Psi (\beta_1 - \pi) \underset{\left(
    \ref{eq:cond1lem7} \right)}{=} \Psi (\beta_2) \Theta (\beta_1), & 
  \end{eqnarray*}
  as required. The resulting condition (\ref{eq:cond3lem7}) at the end of the
  lemma can also be obtained as follows:
  \begin{eqnarray*}
    & \Theta (\beta) \underset{\left( \ref{eq:cond1lem7} \right)}{=} \Psi
    (\beta - \pi) \underset{\left( \ref{eq:cond2lem7} \right)}{=} \Psi (\pi -
    \beta) \underset{\left( \ref{eq:cond1lem7} \right)}{=} \Theta (2 \pi -
    \beta), & 
  \end{eqnarray*}
  as required.
\end{proof}

Using the application of the separation of variables performed above and the previous lemma, we can hence make further progress in constructing some solutions to the stencil formulation problem \textbf{StF} as summarised in the following proposition. This constitutes a new functional problem, the one-dimensional stencil formulation \textbf{1DSt}.

\begin{proposition} 
\label{prop:stencilsepvar}
Let us assume that for some constant $\lambda$ there exists a function $T(\beta) = T(\beta; \lambda)$ satisfying the following properties: 
\begin{enumerate}[label=\textup{1DSt\arabic*}]
   \item \label{hyp:1th9} the combination
    \begin{eqnarray}
      & (\beta + \pi / 2)^{- 1 / 2}  (\beta - 5 \pi / 2)^{- 1 / 2} T (\beta)
      &  \label{eq:squarerootbehaviour}
    \end{eqnarray}
    is analytic in the strip $- \pi \le \mathrm{Re} [\beta] \le 3 \pi$;
    
    \item \label{hyp:2th9}$T (\beta)$ obeys the functional equation 
    \begin{eqnarray}
      T (\beta) + T (\beta + 4 \pi) - \lambda T (\beta + 2 \pi) & = & 0; 
      \label{eq7031}
    \end{eqnarray}
 
   \item \label{hyp:3th9}$T (\beta)$ obeys a growth restriction of the form
    \begin{eqnarray}
      |T (\beta) | & < & C \exp \{\kappa \hspace{0.17em} | \mathrm{Im} [\beta]
      |\}  \label{eq7032}
    \end{eqnarray}
    for some real constants $C$ and $\kappa$;
   \item \label{hyp:4th9}$T$ obeys the symmetry condition 
    \begin{equation}
    T(\beta) = T(2\pi - \beta).
    \label{eq7032b}
    \end{equation}   
\end{enumerate}
Then the function $\hat{W}(\boldsymbol{\alpha})$ defined by 
\begin{equation}
\hat W(\alpha_1, \alpha_2) = 
T(\alpha_1 + \alpha_2; \lambda) T(\alpha_1 - \alpha_2 + \pi; \lambda)
\label{eq7032a}
\end{equation}
obeys the stencil formulation \textup{\textbf{StF}} of Proposition~\ref{prop:stencilform}. 

Any finite linear combination of such $\hat W(\alpha_1, \alpha_2) $
taken for different $\lambda$ also obeys the stencil formulation \textup{\textbf{StF}}.

\end{proposition}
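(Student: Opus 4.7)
My plan is to verify the four items StF1--StF4 of Proposition~\ref{prop:stencilform} one by one for the candidate $\hat{W}(\alpha_1,\alpha_2) = T(\alpha_1+\alpha_2;\lambda)\,T(\alpha_1-\alpha_2+\pi;\lambda)$. The cleanest order is to start with the stencil equation StF2, since it is the most direct calculation: switching to the $\boldsymbol{\beta}$ coordinates defined in (\ref{eq7008}) we have $W^\dagger(\beta_1,\beta_2) = T(\beta_1;\lambda)\,T(\beta_2+\pi;\lambda)$, so that the left-hand side of (\ref{eq7009}) factors as
\begin{equation*}
T(\beta_2+\pi)\bigl[T(\beta_1)+T(\beta_1+4\pi)\bigr] \;-\; T(\beta_1+2\pi)\bigl[T(\beta_2-\pi)+T(\beta_2+3\pi)\bigr],
\end{equation*}
and applying the functional equation (\ref{eq7031}) to each bracket (for the second bracket at the shifted argument $\beta_2-\pi$) produces $\lambda\,T(\beta_2+\pi)T(\beta_1+2\pi) - \lambda\,T(\beta_1+2\pi)T(\beta_2+\pi)=0$.

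Next I would handle the symmetry item StF3. The strategy is to invoke Lemma~\ref{lem:lem7} with the identifications $\Theta(\beta;\lambda)=T(\beta;\lambda)$ and $\Psi(\beta;\lambda)=T(\beta+\pi;\lambda)$. Condition (\ref{eq:cond1lem7}) then reads $T(\beta+\pi)=T(\beta+\pi)$, which is automatic; condition (\ref{eq:cond2lem7}) reads $T(-\beta+\pi)=T(\beta+\pi)$, which is exactly the symmetry (\ref{eq7032b}) of hypothesis \ref{hyp:4th9} after the substitution $\gamma=\beta+\pi$. Lemma~\ref{lem:lem7} therefore delivers (\ref{eq:symeq1beta})--(\ref{eq:symeq2beta}) automatically, and the additional relation (\ref{eq:cond3lem7}) re-derives (\ref{eq7032b}) for free. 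Translating these back through (\ref{eq7008}) gives the three identities (\ref{eq:sheetfixing4})--(\ref{eq:sheetfixing6}) of item \ref{itm:3prop6}.

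For the growth item StF4 I would use \ref{hyp:3th9} directly: for the probe line in \ref{itm:4prop4}, the arguments $\alpha_1\pm\alpha_2$ of the two $T$-factors each have imaginary part that grows at most linearly in $\Lambda$ (with coefficient bounded by $\sqrt{2}$ because $(\alpha'_{1,0})^2+(\alpha'_{2,0})^2=1$), so the estimate $|T(\beta)|<C\exp\{\kappa|\mathrm{Im}[\beta]|\}$ multiplied twice produces a bound of the form $\exp\{\mu\Lambda\}$ for a suitable $\mu$ depending on $\kappa$.

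The step I expect to be the real obstacle is StF1, namely verifying that the prescribed square-root singularity structure of \ref{itm:1prop6} on the four-piece domain $\Omega\cup\Omega_{+-}\cup\Omega_{-+}\cup\Omega_{--}$ follows from the one-dimensional strip analyticity \ref{hyp:1th9}. The branch points match by design: substituting $\beta=\alpha_1+\alpha_2$ turns the factors $(\beta+\pi/2)^{-1/2}(\beta-5\pi/2)^{-1/2}$ into exactly $(\alpha_1+\alpha_2+\pi/2)^{-1/2}(\alpha_1+\alpha_2-5\pi/2)^{-1/2}$, and substituting $\beta=\alpha_1-\alpha_2+\pi$ produces the pair $(\alpha_1-\alpha_2+3\pi/2)^{-1/2}(\alpha_1-\alpha_2-3\pi/2)^{-1/2}$. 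The nontrivial part is to check that every $(\alpha_1,\alpha_2)$ lying in $\Omega\cup\Omega_{+-}\cup\Omega_{-+}\cup\Omega_{--}$ has $\mathrm{Re}[\alpha_1+\alpha_2]$ and $\mathrm{Re}[\alpha_1-\alpha_2+\pi]$ inside the strip $[-\pi,3\pi]$ where $T$ is analytic (away from its two branch points). Here I would exploit the fact that each of $M^\pm$ lies in a vertical strip of width $\pi$ around $\mathrm{Re}[\alpha]=0$ for $M^+$ and around $\mathrm{Re}[\alpha]=-\pi$ (or $+\pi$) for $M^-$, so that both $M^+\cup M^-$ and $\pi-(M^+\cup M^-)$ sit in strips of width $2\pi$; taking sums and differences then confines $\mathrm{Re}[\alpha_1\pm\alpha_2]$ to the required intervals. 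Once this geometric inclusion is established, the claimed analyticity of the regularised product follows. The final sentence of the proposition, on finite linear combinations, is immediate from the linearity of all four conditions StF1--StF4.
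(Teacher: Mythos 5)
Your proposal is correct and follows essentially the same route as the paper, which leaves this proposition without a formal proof and instead defers to the separation-of-variables computation of Section~\ref{sec:seprarablestencil} and to Lemma~\ref{lem:lem7} with exactly your identification $\Theta(\beta)=T(\beta)$, $\Psi(\beta)=T(\beta+\pi)$ (so that (\ref{eq:cond2lem7}) is the symmetry \ref{hyp:4th9} shifted by $\pi$), together with one-line remarks that the branch-point positions and the growth condition carry over. Your write-up is in fact more explicit than the paper's, particularly in the direct verification of (\ref{eq7009}) and in spelling out the strip inclusion needed for \ref{itm:1prop6}.
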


Here $T(\beta ; \lambda)$ is to be understood as playing the role of $\Theta(\beta; \lambda) = \Psi(\beta- \pi; \lambda)$
introduced above. The stencil equation for $T$, the positions of branch points, 
and the symmetry conditions are coming directly from those for~$\Theta$.

A stencil equation for a branching function should be clarified by indicating the positions of the points linked by this equation. To make this clarification, we 
show in figure~\ref{fig:pathbetaplane} 
the reference points $(-\pi , \pi , 3 \pi)$ and the paths connecting them in the 
$\beta$-plane. The other triplets $(\beta, \beta+ 2\pi , \beta+ 4\pi)$
can be obtained from the reference triplet by continuity. 
\begin{figure}[h]
  \centering\includegraphics[width=0.4\textwidth]{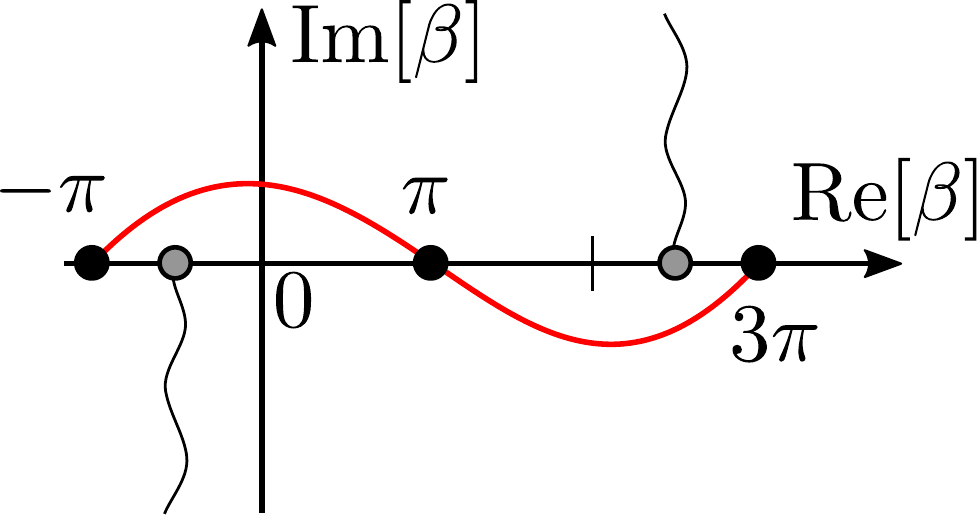}
  \caption{Connection of the points for the separated stencil equation
  (\ref{eq7012}) and illustration of the branch points at $-\tfrac{\pi}{2}$ and $\tfrac{5\pi}{2}$ and their associated cuts.}
\label{fig:pathbetaplane}
\end{figure}

Naturally, the growth condition (\ref{eq7032}) guarantees the fulfillment of the condition~\ref{itm:4prop4}
of Proposition~\ref{prop:stencilform}.


\subsection{Solving one-dimensional stencil equations with ODEs}
\label{sec:stencil2odetext}

Solutions of the functional problem \textbf{1DSt} formulated in Proposition~\ref{prop:stencilsepvar}
can be obtained by solving an ordinary differential equation 
(ODE) of Fuchsian type. This link is summarised in the following theorem, the proof of which can be found in Appendix~\ref{app:C1}.  

\begin{theorem}
  \label{th:stencil2ODE}  
	  
  Let $T(\beta) = T(\beta ; \lambda)$ be a function obeying the conditions \ref{hyp:1th9}-\ref{hyp:4th9}
   of Proposition~\ref{prop:stencilsepvar}.    
   Then $T (\beta)$ is a solution of an ODE of the form
  \begin{equation}
    \left[ \frac{d^2}{d \beta^2} + f (\beta) \frac{d}{d \beta} + g (\beta)
    \right] T (\beta)  =  0 ,
    \label{eq7033}
  \end{equation}
  where $f (\beta)$ and $g (\beta)$ are rational functions (i.e.\ ratios of
  polynomials) of $\mathfrak{r} = e^{i \beta}$. 
  The coefficients $f$ and $g$ obey the symmetry relations 
  \begin{equation}   
  f (- \beta) = - f (\beta),\qquad g (- \beta) = g (\beta),  
 \label{eq7033a}  
  \end{equation}  
  and are bounded as $|{\rm Im} [\beta]| \to \infty$.

\end{theorem}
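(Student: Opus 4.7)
The plan is to construct the ODE from two solutions of the functional equation (\ref{eq7031}). Since that equation is invariant under the shift $\beta \to \beta + 2\pi$, both $F(\beta) := T(\beta)$ and $G(\beta) := T(\beta + 2\pi)$ satisfy it. Assuming $F$ and $G$ are linearly independent (the generic situation), there is a unique second-order linear ODE annihilating both, namely $T'' + fT' + gT = 0$ with
\[
f(\beta) = -\frac{W'(\beta)}{W(\beta)}, \qquad g(\beta) = \frac{F'(\beta) G''(\beta) - G'(\beta) F''(\beta)}{W(\beta)},
\]
where the Wronskian is $W(\beta) = F(\beta)G'(\beta) - G(\beta)F'(\beta)$. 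In the degenerate case of linearly dependent $F$ and $G$, a companion solution has to be extracted from the eigen-decomposition of the shift operator, but this case can be handled separately.

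The crucial observation is that (\ref{eq7031}) implies that under $\beta \to \beta + 2\pi$ the pair $(F, G)$ maps to $(G, \lambda G - F)$. A one-line linear-algebra computation then shows $W(\beta + 2\pi) = W(\beta)$, and an analogous calculation for the numerator of $g$ shows both $f$ and $g$ are $2\pi$-periodic. Consequently they descend to single-valued meromorphic functions of $\mathfrak{r} = e^{i\beta}$ on $\mathbb{C}^{\ast}$.

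To upgrade \emph{meromorphic in $\mathfrak{r}$} to \emph{rational in $\mathfrak{r}$}, the behaviour at $\mathfrak{r} = 0$ and $\mathfrak{r} = \infty$ must be controlled and the number of poles in a fundamental period must be finite. The growth bound (\ref{eq7032}) combined with Cauchy estimates gives $|T^{(n)}(\beta)| \leq C_n e^{\kappa |{\rm Im}[\beta]|}$; in the ratios defining $f$ and $g$ the dominant exponential factors cancel, producing bounds that remain uniform as $|{\rm Im}[\beta]| \to \infty$, i.e. analyticity at $\mathfrak{r} = 0$ and $\mathfrak{r} = \infty$. Within a fundamental period, using the explicit square-root structure (\ref{eq:squarerootbehaviour}) near the branch points of $T$, one verifies that the fractional powers of $(\beta + \pi/2)$ and $(\beta - 5\pi/2)$ that appear in $F, F', F''$ combine inside the Wronskian ratios to leave only simple poles in $f$ (and at worst double poles in $g$) --- precisely the signature of a regular singular point of Fuchsian type. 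This gives finitely many poles per period and hence rationality.

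For the symmetry, hypothesis (\ref{eq7032b}) implies $F(-\beta) = T(2\pi + \beta) = G(\beta)$ and $G(-\beta) = T(2\pi - \beta) = F(\beta)$, so under $\beta \to -\beta$ the functions $F$ and $G$ swap, their first derivatives acquire a minus sign, and their second derivatives are preserved. A short calculation then yields $W(-\beta) = W(\beta)$, whence $W'(-\beta) = -W'(\beta)$ and therefore $f(-\beta) = -f(\beta)$; an analogous manipulation of the numerator of $g$ gives $g(-\beta) = g(\beta)$. The main obstacle I anticipate is in the third paragraph: verifying that $W$ does not vanish away from the branch points of $T$ (which would otherwise introduce spurious poles in $f$ and $g$), and dealing with the degenerate situation in which $F$ and $G$ fail to be linearly independent.
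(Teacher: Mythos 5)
Your architecture is the same as the paper's: take the pair $T(\beta)$, $R(\beta)=T(\beta+2\pi)$, solve for $f$ and $g$ via (generalised) Wronskians, use $2\pi$-periodicity plus growth to pass to rational functions of $\mathfrak{r}=e^{i\beta}$, and read the parity relations off the symmetry $T(2\pi-\beta)=T(\beta)$ (which indeed gives $R(\beta)=T(-\beta)$). But three steps are not yet proofs. First, your ``one-line'' computation $W(\beta+2\pi)=W(\beta)$ treats (\ref{eq7031}) as an identity between single-valued functions, whereas $T$ is branching and the stencil equation only holds along the specific connection paths of figure~\ref{fig:pathbetaplane}. The paper first re-routes the path around the branch point $\beta=5\pi/2$, which flips a sign and turns the shift matrix into $\begin{pmatrix}0&1\\1&-\lambda\end{pmatrix}$ with determinant $-1$: the determinants $D_{m,n}$ are \emph{anti}-periodic, $D_{m,n}(\beta+2\pi)=-D_{m,n}(\beta)$. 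Your final conclusion for the ratios $f,g$ survives because the sign cancels, but the path bookkeeping is a genuine part of the argument, not a formality, and it also governs the single-valuedness question (each $D_{m,n}$ changes sign around $\beta=\pm\pi/2$, which is why the ratios, not the individual determinants, are meromorphic in $\mathfrak{r}$).

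Second, ``the dominant exponential factors cancel'' does not control a ratio: both numerator and denominator are bounded \emph{above} by $Ce^{\kappa|\mathrm{Im}\beta|}$, but the denominator could be far smaller than its bound, so no bound on $f$ or $g$ at $\mathfrak{r}=0,\infty$ follows. The paper avoids this by working with the individual determinants: it forms $\tilde P_{m,n}(\beta)=\sin(\beta/2)(\cos\beta)^{\max(m,n)}D_{m,n}(\beta)/\sqrt{\cos\beta}$, which is single-valued, $2\pi$-periodic, analytic on the strip, of at most power growth at $\mathfrak{r}=0,\infty$ and with no branch point at $\mathfrak{r}=0$; the extended Liouville theorem then makes each $\tilde p_{m,n}$ rational, and $f,g$ are ratios of rationals. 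This also settles ``finitely many poles per period'' without your worry about zeros of $W$ --- such zeros simply produce poles of $f,g$, which the theorem permits (they are the false singular points discussed after the theorem statement), so that ``obstacle'' is a non-issue. Third, the degenerate case you defer is resolved in the paper: $D_{0,1}\equiv 0$ would force $T'=(R'/R)T$ with a coefficient shown to be $2\pi$-periodic via the stencil relation, hence $T(\beta+2\pi)=\Upsilon\,T(\beta)$ for a constant $\Upsilon$, which is incompatible with $T$ having branch points only at $\beta=-\pi/2$ and $\beta=5\pi/2$. You need some version of each of these three arguments for the proof to close.
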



Theorem~\ref{th:stencil2ODE} embeds the 1D stencil equations 
(\ref{eq7012}) and (\ref{eq:7013}) into the rich context of Fuchsian equations. Fuchsian ODEs are linear ODEs whose coefficients are rational functions and whose singular points (including infinity) are all regular singular points.
Indeed, using the change of variables $\beta \to \mathfrak{r} = e^{i \beta}$, the equation (\ref{eq7033}) becomes a Fuchsian ODE of the second order. We are not planning to use this equation, but, for completeness, we write its form here: 
\begin{equation}
\left[
\frac{d^2}{d\mathfrak{r}^2} + f^\sharp (\mathfrak{r}) \frac{d}{d\mathfrak{r}} + g^\sharp(\mathfrak{r})
\right] T^\sharp (\mathfrak{r}) = 0, 
\label{eq:fuchsian1}
\end{equation}
\[
T^\sharp(\mathfrak{r}) = T(\beta(\mathfrak{r})),
\qquad 
f^\sharp(\mathfrak{r}) = \frac{1-i f(\beta(\mathfrak{r}))}{\mathfrak{r}},
\qquad
g^{\sharp} (\mathfrak{r}) = -\frac{g(\beta (\mathfrak{r}))}{\mathfrak{r}^2},
\qquad
\beta (\mathfrak{r}) = -i \log(\mathfrak{r}). 
\]
   
Such a Fuchsian equation possesses (regular) {\em singular points} that are the (polar) singularities 
of the coefficients of (\ref{eq:fuchsian1}). These singular points can be divided into two sorts: 
they may be {\em false\/} or {\em strong\/} (see~\cite{Falsepoints}).  
A strong singular point is a point at which not only the coefficients 
$f^\sharp$ and $g^\sharp$ have singularities, but also the solutions
(at least one of the two linearly independent solutions) have singularities. At false singular points, conversely, 
both linearly independent solutions are regular. 
The removing of false singular points by isomonodromic 
transformation is the subject of~\cite{Falsepoints}. 

The only possible strong singular points of equation (\ref{eq:fuchsian1})
can be found from the behaviour of the functions $T(\beta)$ 
and $R(\beta) = T(\beta + 2\pi)$. They 
are $\mathfrak{r}=i, -i, 0, \infty$. As expected, all singular points are regular (in the usual 
sense). The pairs of exponents 
at the points $\pm i$ are $(0, 1/2)$ (see the condition~\ref{hyp:1th9} of Proposition~\ref{prop:stencilsepvar}). 
The exponents of the points $0$ and $\infty$ are unknown {\em a priori}.

Let us call the equation (\ref{eq:fuchsian1}) {\em minimal\/} 
if it has only strong singular points and let us also call the equation (\ref{eq7033}) minimal 
if the equation (\ref{eq:fuchsian1}) resulting from it is minimal. 
Below we only study the properties of minimal equations (\ref{eq7033}).
In particular, we will prove that the wave field $u$ obtained from a minimal equation 
by the procedure 
\[
T(\beta) 
\rightarrow 
\hat W(\alpha_1 , \alpha_2)
\rightarrow
\tilde W(\xi_1 , \xi_2)
\rightarrow
u(x_1, x_2, x_3)
\]
obeys the edge conditions. Moreover, a more detailed study, which falls
beyond the scope of the present work, shows that if the equation is not minimal then the resulting wave field $u$ does not obey the edge conditions. 

Note that an ODE with four strong regular singular points is Heun's equation. Unfortunately, no analytical representation for its solution or at least its monodromy matrix is known. 

The form of a minimal equation (\ref{eq7033}) is given by the following proposition, the proof of which can be found in Appendix~\ref{appB_2}. 

\begin{proposition}
\label{prop:minimal}
For the equation (\ref{eq7033}) to be minimal and for its solutions to have the correct behaviour at the points $\beta = \pm \pi/2$, the coefficients $f$ and $g$ have to be
\begin{equation}
f(\beta) = -\frac{1}{2}\tan \beta,
\qquad 
g(\beta) = \frac{a}{\cos \beta} + b
\label{eq:minimal}
\end{equation}
for some constants $a$ and $b$.
\end{proposition}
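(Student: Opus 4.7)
My plan is to combine the indicial-equation analysis for (\ref{eq7033}) at its regular singular points $\beta=\pm\pi/2$ with the constraints imposed by minimality (which restricts the poles of $f^\sharp,g^\sharp$ in $\mathfrak{r}$ to $\{0,\pm i,\infty\}$) and the symmetry relations (\ref{eq7033a}). Since $f,g$ are rational in $\mathfrak{r}=e^{i\beta}$ and bounded as $|\mathrm{Im}\,\beta|\to\infty$, i.e.\ at $\mathfrak{r}=0,\infty$, they must in fact be rational functions of $\mathfrak{r}$ whose only possible poles are at $\mathfrak{r}=\pm i$. The task then reduces to determining precisely the allowed pole data and reconstructing $f,g$.

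First I would pin down the indicial exponents at $\beta=\pm\pi/2$. By \ref{hyp:1th9}, $T$ is analytic at $\beta=\pi/2$, while $R(\beta):=T(\beta+2\pi)$ (a second solution of (\ref{eq7033}) via the stencil equation (\ref{eq7031})) inherits a $\sqrt{\beta-\pi/2}$ branching from $T$ at $\beta=5\pi/2$. The non-integer gap between their local exponents guarantees linear independence, so the indicial exponents at $\beta=\pi/2$ are $0$ and $1/2$; an analogous argument with the roles of $T$ and $R$ swapped yields the same pair at $\beta=-\pi/2$. Writing $f(\beta)=\tfrac{c_1}{\beta-\pi/2}+O(1)$ and $g(\beta)=\tfrac{c_2}{(\beta-\pi/2)^2}+O((\beta-\pi/2)^{-1})$, the indicial polynomial $\rho(\rho-1)+c_1\rho+c_2=0$ with roots $0,1/2$ forces $c_1=1/2$ and $c_2=0$. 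Thus $f$ has a simple pole with residue $1/2$ at $\pi/2$, $g$ has at most a simple pole there, and (\ref{eq7033a}) propagates the same structure to $-\pi/2$ (with the same residue $1/2$ for $f$).

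I would then transport this data to the $\mathfrak{r}$-plane, noting that all translates $\pm\pi/2+n\pi$ collapse to $\mathfrak{r}=\pm i$. Using the local expansions $\mathfrak{r}\mp i \sim \mp(\beta\mp\pi/2)$, the residues just computed force
\[
\tilde f(\mathfrak{r}) = \frac{-1/2}{\mathfrak{r}-i}+\frac{1/2}{\mathfrak{r}+i}+C, \qquad
\tilde g(\mathfrak{r}) = \frac{A}{\mathfrak{r}-i}+\frac{B}{\mathfrak{r}+i}+D,
\]
for some constants $A,B,C,D$. The symmetries (\ref{eq7033a}) translate to $\tilde f(1/\mathfrak{r})=-\tilde f(\mathfrak{r})$ and $\tilde g(1/\mathfrak{r})=\tilde g(\mathfrak{r})$; a short direct calculation pins down $C=i/2$ and $A=B$. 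Applying the identities $\sec\beta=2\mathfrak{r}/(\mathfrak{r}^2+1)$ and $\tan\beta=-i(\mathfrak{r}^2-1)/(\mathfrak{r}^2+1)$ to collapse the partial fractions yields exactly $f=-\tfrac{1}{2}\tan\beta$ and $g=a\sec\beta+b$, matching (\ref{eq:minimal}).

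The main obstacle will be the clean justification of the exponents $(0,1/2)$ at $\beta=\pm\pi/2$: hypothesis \ref{hyp:1th9} provides the square-root branching only at $\beta=-\pi/2$ and $\beta=5\pi/2$, so one must argue that $R=T(\,\cdot\,+2\pi)$ really is the second independent solution needed to read off the exponent pair at both $\pm\pi/2$, and that minimality of (\ref{eq:fuchsian1}) really forbids the alternative that $\beta=\pm\pi/2$ be ordinary or merely integer-exponent singularities (which would be incompatible with the observed branching). Once this is set, the remaining reconstruction is a straightforward partial-fraction computation constrained by the $\mathfrak{r}\leftrightarrow1/\mathfrak{r}$ symmetry.
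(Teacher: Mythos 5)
Your proposal is correct and follows essentially the same route as the paper's own proof: minimality together with the boundedness and symmetry relations of Theorem~\ref{th:stencil2ODE} restricts $f$ and $g$ to rational functions of $\mathfrak{r}$ with poles only at $\mathfrak{r}=\pm i$, and the exponent pair $(0,1/2)$ at $\beta=\pm\pi/2$ then fixes the residue of $f$ to $1/2$ (equivalently $c=-1/2$), the only cosmetic difference being that you reconstruct $f,g$ via partial fractions in $\mathfrak{r}$ and the indicial polynomial where the paper uses truncated trigonometric (Chebyshev) sums over $\cos\beta$. Your explicit derivation of $c_2=0$ (ruling out a double pole of $g$) and your honest flagging that ``correct behaviour at $\beta=\pm\pi/2$'' must be read as forcing the exponents to be exactly $(0,1/2)$ are both points the paper handles only implicitly, so no gap is introduced.
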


Hence if the equation (\ref{eq7033}) is minimal, it takes the form 
\begin{equation}
\left[
\frac{d^2}{d \beta^2} - \frac{\tan \beta}{2}  \frac{d}{d\beta}
+ \frac{a}{\cos \beta} + b
\right]T(\beta) = 0,
\label{eq7033min}
\end{equation}
for some arbitrary constant parameters $a$ and $b$ that should be found from some additional conditions. 

Let us now consider the points $\beta=-\pi/ 2$, $\pi / 2$, $3\pi /2$ in more details. All these points are regular singular points of (\ref{eq7033min})
with a pair of exponents $(0,1/2)$. From the theory of ODEs 
(see e.g.\ \cite{Olver}), it is known that at each point there exists 
a basis of two linearly independent solutions of (\ref{eq7033min}), such that the 
first component of this basis is regular at the corresponding singular point, 
and the second solution is a regular function multiplied by a square root 
singularity. Let us write these basis functions in the form
\[
B_{-\pi/2}(\beta) = 
\left( \begin{array}{c}
\psi_{1,1} (\beta + \pi/2) \\
(\beta + \pi/2)^{1/2} \psi_{2,1} (\beta + \pi/2)
\end{array} \right),
\]
\[ 
B_{\pi/2}(\beta) = 
\left( \begin{array}{c}
\psi_{1,2} (\beta - \pi/2) \\
(\beta - \pi/2)^{1/2} \psi_{2,2} (\beta - \pi/2)
\end{array} \right),
\]
\[
B_{3\pi/2}(\beta) = 
\left( \begin{array}{c}
\psi_{1,3} (\beta - 3\pi/2) \\
(\beta - 3\pi/2)^{1/2} \psi_{2,3} (\beta - 3\pi/2)
\end{array} \right),
\]
where $\psi_{m,n}$ are functions holomorphic in some neighbourhood of zero.  
Since there can only be two linearly independent solutions 
of (\ref{eq7033min}), the bases can be linearly expressed in terms of each other
by {\em connection matrices}:
\begin{equation}
B_{\pi / 2} =\mathsf{M} B_{-\pi/2}
= \mathsf{N} B_{3\pi/2} 
\label{eq:minimal_1}
\end{equation}
for some constant $2\times 2$ matrices $\mathsf{M}=(\mathsf{m}_{j,\ell})$ and $\mathsf{N}=(\mathsf{n}_{j,\ell})$.

The following proposition, the proof of which can be found in Appendix~\ref{appB_3}, formulates the restrictions imposed on the solutions of (\ref{eq7033min}). These conditions should be satisfied by choosing appropriate
values of $a$ and $b$.

\begin{proposition}
\label{prop:monodromy}
Let the constants $a$ and $b$ introduced in (\ref{eq:minimal}) be chosen in such a way that
the connection matrices defined in (\ref{eq:minimal_1})
for the equation (\ref{eq7033min}) have the following properties: 
\[
\mathsf{m}_{1,1} = 0 \text{ and } \mathsf{n}_{1,2} =0.
\]
Then (\ref{eq7033min}) has a solution $T (\beta)$ obeying the conditions 
of Proposition~\ref{prop:stencilsepvar}.
\end{proposition}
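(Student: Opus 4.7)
The plan is to take $T(\beta) := \psi_{1,2}(\beta - \pi/2)$, the Frobenius solution of the ODE (\ref{eq7033min}) that is regular at $\beta = \pi/2$ (unique up to scalar), and then verify in turn the four conditions \ref{hyp:1th9}--\ref{hyp:4th9} of Proposition~\ref{prop:stencilsepvar}, with the separation constant $\lambda$ identified along the way. For the analyticity claim \ref{hyp:1th9}, I would continue $T$ across the strip $-\pi \leq \mathrm{Re}[\beta] \leq 3\pi$ using the ODE, whose only singular points inside are $\beta = -\pi/2,\ \pi/2,\ 3\pi/2,\ 5\pi/2$, each with exponent pair $(0, 1/2)$. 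The hypothesis $\mathsf{m}_{1,1} = 0$, applied through the connection formula $B_{\pi/2, 1} = \mathsf{m}_{1,1}\, B_{-\pi/2, 1} + \mathsf{m}_{1,2}\, B_{-\pi/2, 2}$, forces $T$ near $-\pi/2$ to be proportional to the square-root branch $B_{-\pi/2, 2}$, producing the required $(\beta + \pi/2)^{1/2}$ singularity. Likewise $\mathsf{n}_{1,2} = 0$ makes $T$ regular at $3\pi/2$; the square-root behaviour at $5\pi/2$ will come from the symmetry step below.

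For the stencil equation \ref{hyp:2th9}, the key fact is that (\ref{eq7033min}) is $2\pi$-periodic in $\beta$, so $T(\beta)$, $T(\beta + 2\pi)$ and $T(\beta + 4\pi)$ all belong to the two-dimensional space of solutions. Continuing $T$ along the path indicated in figure~\ref{fig:pathbetaplane} defines a monodromy operator $\mathcal{M}$, and the Cayley--Hamilton theorem yields
\[
T(\beta + 4\pi) - \mathrm{tr}(\mathcal{M})\,T(\beta + 2\pi) + \det(\mathcal{M})\,T(\beta) = 0.
\]
Setting $\lambda := \mathrm{tr}(\mathcal{M})$, equation (\ref{eq7031}) follows provided $\det(\mathcal{M}) = 1$. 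By Abel's identity applied to $f = -\tfrac{1}{2}\tan\beta$, the Wronskian satisfies $W(\beta) \propto (\cos\beta)^{-1/2}$, and its transformation along the prescribed continuation pins down $\det(\mathcal{M})$; the precise indentation of the path of figure~\ref{fig:pathbetaplane} is what is used to arrange for $W$ to return to itself rather than pick up a sign.

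The growth condition \ref{hyp:3th9} I would handle by passing to the Fuchsian form (\ref{eq:fuchsian1}) with $\mathfrak{r} = e^{i\beta}$. The limits $\mathrm{Im}[\beta] \to \pm\infty$ correspond to $\mathfrak{r} \to 0$ and $\mathfrak{r} \to \infty$, both regular singular points. The Frobenius expansions there are of the form $T^\sharp(\mathfrak{r}) = \mathfrak{r}^{\rho}\cdot(\text{holomorphic})$, which translates back to $T(\beta) \sim e^{i\rho\beta}$ and gives the required exponential bound $|T(\beta)| \leq C \exp(\kappa |\mathrm{Im}[\beta]|)$ with $\kappa$ determined by the exponents at $0$ and $\infty$.

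Finally for the symmetry \ref{hyp:4th9}, the explicit form (\ref{eq:minimal}) together with (\ref{eq7033a}) gives $f(2\pi - \beta) = -f(\beta)$ and $g(2\pi - \beta) = g(\beta)$, so $\hat T(\beta) := T(2\pi - \beta)$ is also a solution of (\ref{eq7033min}). Since regularity of $T$ at $3\pi/2$ translates into regularity of $\hat T$ at $\pi/2$, and the space of such solutions is one-dimensional, one has $\hat T = c\, T$ with $c^2 = 1$. Ruling out $c = -1$ (by direct normalization, or by replacing $T$ with the symmetric combination $T + \hat T$ which inherits regularity at both $\pi/2$ and $3\pi/2$) then yields $\hat T = T$; this simultaneously transports the branch behaviour from $-\pi/2$ to $5\pi/2$, closing the last gap in \ref{hyp:1th9}. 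The most delicate step I anticipate is verifying $\det(\mathcal{M}) = 1$ via careful branch bookkeeping for $(\cos\beta)^{-1/2}$ along the continuation path, as this is what reconciles the $2\pi$-periodic ODE with the stencil equation.
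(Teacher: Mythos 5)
Your proposal is correct and follows essentially the same route as the paper's own proof: the same choice $T=\psi_{1,2}(\beta-\pi/2)$, the same use of $\mathsf{m}_{1,1}=0$ and $\mathsf{n}_{1,2}=0$ to force the square-root behaviour at $-\pi/2$ and regularity at $3\pi/2$, the same one-dimensionality argument giving the symmetry $T(\beta)=T(2\pi-\beta)$ (and hence the behaviour at $5\pi/2$), and the same reduction of the stencil equation to a determinant condition on the $2\pi$-translation operator, with the growth bound coming from $\mathfrak{r}=0,\infty$ being regular singular points. Your evaluation of $\det\mathcal{M}$ via Abel's identity, $W\propto(\cos\beta)^{-1/2}$, is equivalent to the paper's evaluation via the characteristic exponents at $+i\infty$ (the relation $\kappa_1+\kappa_2=i/2$ is exactly the asymptotics $W\sim e^{i\beta/2}$), and the branch bookkeeping you flag as the delicate step is precisely what the paper disposes of by replacing the path of figure~\ref{fig:pathbetaplane} with the straight-translation path, at the cost of the sign flip on the $T(\beta+4\pi)$ term recorded in~(\ref{eq7034}).
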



\subsection{Link between (\ref{eq7033min}) and the Lam{\'e} equation}
\label{sec:Lame}

The main aim of this section is to highlight the strong link between the stencil equation (that is a result of the additive crossing property) and Lam\'e equation. The aim being to make a connection between the separated solution of the physical field (treated for example in \cite{kraus}) and our separation technique in the Fourier space. 
In order to do so, let us introduce the change of variable 
\begin{equation}
\chi = \chi(\beta) = \arccos (\sqrt{2} \cos (\beta / 2)),
\label{eq:chi01}
\end{equation}
which maps the segment $[\pi / 2, 3 \pi /2]$ of $\beta$ onto the segment $[0 , \pi]$ of~$\chi$. Upon introducing the function $T^L (\chi)$ defined by 
\begin{equation}
T^L(\chi(\beta)) = T(\beta),
\label{eq:chi02}
\end{equation}
the ODE (\ref{eq7033min}) can be written in terms of the $\chi$ variable and becomes
\begin{equation}
\left[
\frac{d^2}{d \chi^2} + \frac{\cos \chi \, \sin \chi}{2 - \cos^2 \chi}
\frac{d}{d\chi}
+
\frac{\nu(\nu +1) \sin^2 \chi}{2 - \cos^2 \chi} 
-
\frac{4a}{2 - \cos ^2 \chi}   
\right] T^L(\chi) = 0,
\label{eq:chi03}
\end{equation}
where $\nu$ is a constant parameter linked to $b$ by the relations
\begin{align}
\nu (\nu + 1) &= 4 b, & 
\nu &= \frac{-1+ \sqrt{1 + 16 b}}{2}.
\label{eq:chi04}
\end{align}
In a very similar way, let us now introduce the change of variable 
\begin{equation}
\tau = \tau(\beta) = \arccos (\sqrt{2} \sin (\beta / 2)).
\label{eq:chi05}
\end{equation}
mapping the segment $[-\pi/2 , \pi/2]$ of $\beta$ onto the segment 
$[0,\pi]$ of $\tau$.
Upon introducing the function
$T^R (\tau)$ defined by 
\begin{equation}
T^R(\tau(\beta)) = T(\beta),
\label{eq:chi06}
\end{equation}
the equation (\ref{eq7033min}) transforms into 
\begin{equation}
\left[
\frac{d^2}{d \tau^2} + \frac{\cos \tau \, \sin \tau}{2 - \cos^2 \tau}
\frac{d}{d\tau}
+
\frac{\nu (\nu +1) \sin^2 \tau}{2 - \cos^2 \tau} 
+
\frac{4a}{2 - \cos ^2 \tau}   
\right] T^R(\tau) = 0.
\label{eq:chi07}
\end{equation}

It is, at this stage, important to realise that equations (\ref{eq:chi03}) and (\ref{eq:chi07}) are both Lam{\'e} differential equations written in their trigonometric forms (see  \cite{Bateman3}).

The following proposition is valid:

\begin{proposition}
\label{prop:lame}
If the values of $a$ and $b$ are chosen such that the condition of Proposition~\ref{prop:monodromy}
are satisfied, then there exist solutions $T^L(\chi)$ and $T^R (\tau)$ of equations 
(\ref{eq:chi03}) and (\ref{eq:chi07}), respectively, such that 
\begin{equation}
T^L(\chi) = T^L(-\chi),
\qquad 
T^L(\chi) = T^L(2\pi -\chi),
\label{eq:chi08}
\end{equation} 
\begin{equation}
T^R(\tau) = T^R(-\tau),
\qquad 
T^R(\tau) = -T^R(2\pi -\tau). 
\label{eq:chi09}
\end{equation} 
The inverse statement is also valid. 
\end{proposition}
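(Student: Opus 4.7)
The plan is to translate the symmetry conditions (\ref{eq:chi08})--(\ref{eq:chi09}) on $T^L$ and $T^R$ into local conditions on $T(\beta)$ at the three regular singular points $\beta = -\pi/2, \pi/2, 3\pi/2$ of (\ref{eq7033min}), and then invoke Proposition~\ref{prop:monodromy} to see that these local conditions are exactly those encoded by $\mathsf{m}_{1,1}=0$ and $\mathsf{n}_{1,2}=0$.

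First I would analyse the changes of variables (\ref{eq:chi01}) and (\ref{eq:chi05}) near each singular point of (\ref{eq7033min}). A direct Taylor expansion of $\cos(\beta/2)$ about $\pi/4$, combined with $\cos\chi = 1 - \chi^2/2 + O(\chi^4)$, gives $\chi^2 \sim \beta - \pi/2$ near $\beta = \pi/2$, so that $\beta - \pi/2$ is an analytic function of $\chi^2$ and $\chi \mapsto -\chi$ corresponds to the local monodromy around the branch point $\beta = \pi/2$. Analogous computations near $\beta = 3\pi/2$ (for $\chi$) and near $\beta = \pm\pi/2$ (for $\tau$) give $(\pi-\chi)^2 \sim 3\pi/2 - \beta$, $\tau^2 \sim \pi/2 - \beta$, and $(\pi-\tau)^2 \sim \beta + \pi/2$, respectively. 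Hence $\chi \mapsto 2\pi - \chi$, $\tau \mapsto -\tau$, and $\tau \mapsto 2\pi - \tau$ correspond to the local monodromies around $\beta = 3\pi/2$, $\pi/2$, and $-\pi/2$.

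Next I would translate each symmetry. By Theorem~\ref{th:stencil2ODE} and condition~\ref{hyp:1th9}, the two exponents of (\ref{eq7033min}) at each of $\pm \pi/2, 3\pi/2$ are $0$ and $1/2$. So $T(\beta) = c_r\, R(\beta) + c_s\,(\beta-\beta_0)^{1/2} S(\beta)$ locally, with $R, S$ holomorphic. Under $\beta - \pi/2 \leftrightarrow \chi^2$, the regular part is even in $\chi$ and the square-root part is odd, so $T^L(\chi) = T^L(-\chi)$ is equivalent to vanishing of the square-root part at $\beta = \pi/2$, i.e.\ regularity of $T$ there. The same reasoning yields: $T^L(\chi)=T^L(2\pi-\chi)$ $\Leftrightarrow$ $T$ regular at $\beta = 3\pi/2$; $T^R(\tau)=T^R(-\tau)$ $\Leftrightarrow$ $T$ regular at $\beta = \pi/2$ (consistent with the first); and $T^R(\tau)=-T^R(2\pi-\tau)$ $\Leftrightarrow$ $T$ has \emph{pure} square-root behaviour at $\beta = -\pi/2$ (vanishing regular part).

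Finally I would read off the connection matrix conditions. By definition, the first component of $B_{\pi/2}$ is the solution regular at $\beta = \pi/2$. The identity $B_{\pi/2} = \mathsf{N} B_{3\pi/2}$ written in its first row gives $\psi_{1,2}(\beta-\pi/2) = \mathsf{n}_{1,1}\psi_{1,3}(\beta-3\pi/2) + \mathsf{n}_{1,2}(\beta-3\pi/2)^{1/2}\psi_{2,3}(\beta-3\pi/2)$, so $\mathsf{n}_{1,2}=0$ is equivalent to regularity of this solution at $3\pi/2$. Similarly, the first row of $B_{\pi/2} = \mathsf{M} B_{-\pi/2}$ shows that $\mathsf{m}_{1,1}=0$ is equivalent to this solution being pure square root at $-\pi/2$. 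These are exactly the three local conditions extracted above, so defining $T^L$ and $T^R$ from the solution $T$ produced by Proposition~\ref{prop:monodromy} via (\ref{eq:chi02}) and (\ref{eq:chi06}) yields the symmetries (\ref{eq:chi08})--(\ref{eq:chi09}). The converse is immediate by reversing the argument.

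The main obstacle will be the careful bookkeeping of branches: justifying that the local homeomorphism $\chi^2 \leftrightarrow \beta - \pi/2$ really identifies the involution $\chi \mapsto -\chi$ with the monodromy around the branch point, and that the even/odd decomposition in $\chi$ matches the regular/square-root decomposition in $\beta$ globally on the analytic continuation (rather than only as formal series). Once this correspondence is cleanly established, the remainder of the argument is a direct matching of conditions.
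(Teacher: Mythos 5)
Your proposal is correct and follows essentially the same route as the paper: both arguments rest on the fact that the changes of variables (\ref{eq:chi01}) and (\ref{eq:chi05}) are locally two-to-one at $\beta=\pi/2$, $3\pi/2$ and $-\pi/2$ (so that $\chi\mapsto-\chi$, $\chi\mapsto 2\pi-\chi$, $\tau\mapsto-\tau$, $\tau\mapsto 2\pi-\tau$ realise the local monodromies), combined with $T$ being regular at $\pi/2$ and $3\pi/2$ and a pure square root at $-\pi/2$. The paper phrases this by tracking the image in the $\beta$-plane of a path from $\chi_0$ to $-\chi_0$ (a closed loop around the corresponding singular point) rather than through local Frobenius expansions and even/odd decomposition, but the mechanism is identical, and your explicit reduction to the conditions $\mathsf{m}_{1,1}=0$ and $\mathsf{n}_{1,2}=0$ is precisely what Proposition~\ref{prop:monodromy} supplies.
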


\begin{proof}
Let $\chi_0\in(0,\pi)$ and consider a direct path $P_{\chi_0}$ in the $\chi$ complex plane joining $\chi_0$ to $-\chi_0$. Then the image of this path in the $\beta$ plane, $2\arccos(\tfrac{1}{\sqrt{2}}\cos(P_{\chi_0}))$, is a closed path starting and ending at $\beta(\chi_0)$ and encircling the point $\beta=\tfrac{\pi}{2}$ once. Since this point is not a branch point of $T(\beta)$, the value of $T$ is the same at the start and the end of this path, implying that $T^L(\chi_0)=T^L(-\chi_0)$. 
Similarly, the image in the $\beta$ plane of a path joining $\chi_0$ to $2\pi-\chi_0$ is a closed path encircling $\beta=\tfrac{3\pi}{2}$, which is not a branch point of $T(\beta)$, implying that $T^L(\chi_0)=T^L(2\pi-\chi_0)$. 
The second equation can be proven similarly by considering $\tau_0\in(0,\pi)$, and note that the image in the $\beta$ plane of a path joining $\tau_0$ to $-\tau_0$ is a closed path encircling $\beta=\tfrac{\pi}{2}$, which is not a branch point (hence $T^R(\tau_0)=T^R(-\tau_0)$), while the image of a path joining $\tau_0$ to $2\pi-\tau_0$ encircles the point $\beta=\tfrac{-\pi}{2}$, which is a branch point of $T(\beta)$ with square root behaviour. Hence $T(\beta)$ changes its sign along this path, which leads to $T^R(\tau_0)=-T^R(2\pi-\tau_0)$.
\end{proof}

It hence transpires that, by comparison\footnote{In particular see Eq. (3.5) and (3.6) of \cite{kraus} when, in their notations, $k=k'=1/\sqrt{2}$.} to \cite{kraus}, the values $a$, $b$ and the functions $T^L$, $T^R$
obey the  Sturm--Liouville problem derived for a flat Dirichlet cone in the
sphero-conal coordinates.


\section{Interpretation of the solution} \label{sec:wavefield}

\subsection{Imposing the edge conditions}

Let us reconstruct the wave field using the 
integral representation (\ref{eq:physicalfieldintegralxi}),
and consider the singularities of the field on the edges of the quarter-plane, namely the 
lines $x_1 = 0 , x_2 >0 , x_3 = 0$ and $x_1 > 0, x_2=0 , x_3 = 0$. A usual formulation of a diffraction problem 
includes Meixner conditions at the edges. 
These conditions have been skipped so far, and we now return to them. 

In the case of a half-plane, the 
Meixner condition is equivalent to continuity of the field near the edge. If $\rho$ is the distance between the edge and the observation point, which is close to the 
edge, the field allowed by the Meixner condition behaves as $\sim \rho^{1/2}$, 
while the first prohibited term of the same symmetry type is $\sim \rho^{-1/2}$. 

Thus, in order for the edge condition to be satisfied, it is enough for the integral (\ref{eq:physicalfieldintegralxi}) to be convergent at the edges. 

Let us prove this for the edge $x_1 > 0, x_2 =0$, the other edge can be dealt with similarly.  The integral for the field 
at the edge $x_1 > 0, x_2 = x_3 = 0$ takes the form 
\begin{equation}
u(x_1 , 0 , 0) = - \frac{i}{4\pi^2} 
\int_{\Gamma_\xi} \int_{\Gamma_\xi}
\tilde K (\xi_1 , \xi_2)
\tilde W (\xi_1 , \xi_2) e^{ - i \xi_1 x_1}
 \, d \xi_1 \, d \xi_2,
\label{eq01}
\end{equation}
where we recall that $\Gamma_\xi$ is just the real axis. For the edge condition to be satisfied, we demand that this integral converges. 

To see this, fix $\xi_2\in\Gamma_\xi$ and consider the integral over $\xi_1$. Deform the contour of integration in the $\xi_1$ plane to the contour $\zeta$ as shown in figure~\ref{fig:Add01}. We shift the contour into the lower half-plane
since the term $e^{- i \xi_1 x_1}$ decays exponentially there if $x_1 >0$. The integration over $\xi_2$ is held over the real axis $\Gamma_\xi$. 

\begin{figure}[h]
  \centering\includegraphics[width=0.5\textwidth]{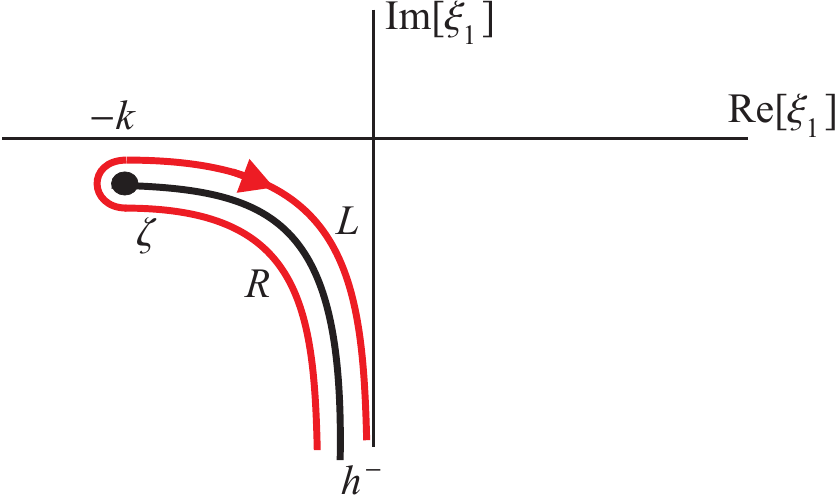}
  \caption{Deformation of the integration contour in the $\xi_1$-plane}
\label{fig:Add01}
\end{figure}

In figure ~\ref{fig:Add01}, the integration contour $\zeta$ is shown at some distance from $h^-$ for clarity. 
In fact, we are letting this distance tend to zero so that for each point of 
$h^-$ there is a small portion of $\zeta$ going from $-i\infty$ to $-k$, and a small 
portion of $\zeta$ going in the opposite direction.

So, the integral becomes rewritten in the form 
\begin{equation}
u(x_1 , 0 , 0) = - \frac{i}{4\pi^2} 
\int_{\Gamma_\xi}
\int_{\zeta}
\tilde K (\xi_1 , \xi_2)
\tilde W (\xi_1 , \xi_2) e^{ - i \xi_1 x_1}
 \, d \xi_1 \, d \xi_2.
\label{eq02}
\end{equation}

Because the function $\tilde W(\xi_1 , \xi_2)$ grows algebraically, the integral 
in the $\xi_1$-plane is always convergent. Our aim is to study the convergence of the 
external integral over~$\xi_2$.

Because, as $|\xi_2|\to\infty$, we have $\tilde{K}(\xi_1,\xi_2)=\mathcal{O}(1/|\xi_2|)$, one can see that the integral is convergent if 
\begin{equation}
\tilde W_L(\xi_1 , \xi_2) - \tilde W_R (\xi_1 , \xi_2) = o(1) 
\text{ as }
|\xi_2| \to \infty,
\label{eq03}
\end{equation}
where $\tilde W_L$ and $\tilde W_R$ are the values taken, respectively, 
on the left and and the right shore of the contour $\zeta$ for some~$\xi_1 \in h^-$. 

In the angular coordinates $(\alpha_1, \alpha_2)$ introduced in Section \ref{sec:formulationangularalpha}, the integration contours have the shape shown in 
figure~\ref{fig:Add01a}. Namely, the integration in the $\alpha_1$-plane
is held over the contour $m^- + \epsilon $, where $\epsilon$ is a small positive real number, 
and the limit $\epsilon \to 0$ corresponds to the contour $\zeta$ falling onto~$h^-$. 
The integration in the $\alpha_2$-plane is held over the contour 
$\Gamma_\alpha=m^- + \pi/2$, which is the image of 
the real axis under the mapping $\xi \to \alpha$, as discussed in Section \ref{sec:formulationangularalpha}.
The contour $m^- + \epsilon $ possesses a symmetry 
$\alpha \to -\pi + 2\epsilon - \alpha$. In the limit $\epsilon \to 0$, 
this symmetry corresponds to the formation of a pair of points belonging to the 
right and the left shore of the cut~$h^-$, as summarised in tables \ref{table:mapping-summary} and \ref{table:symalphaplane}.  
\begin{figure}[h]
  \centering
  \includegraphics[width=0.5\textwidth]{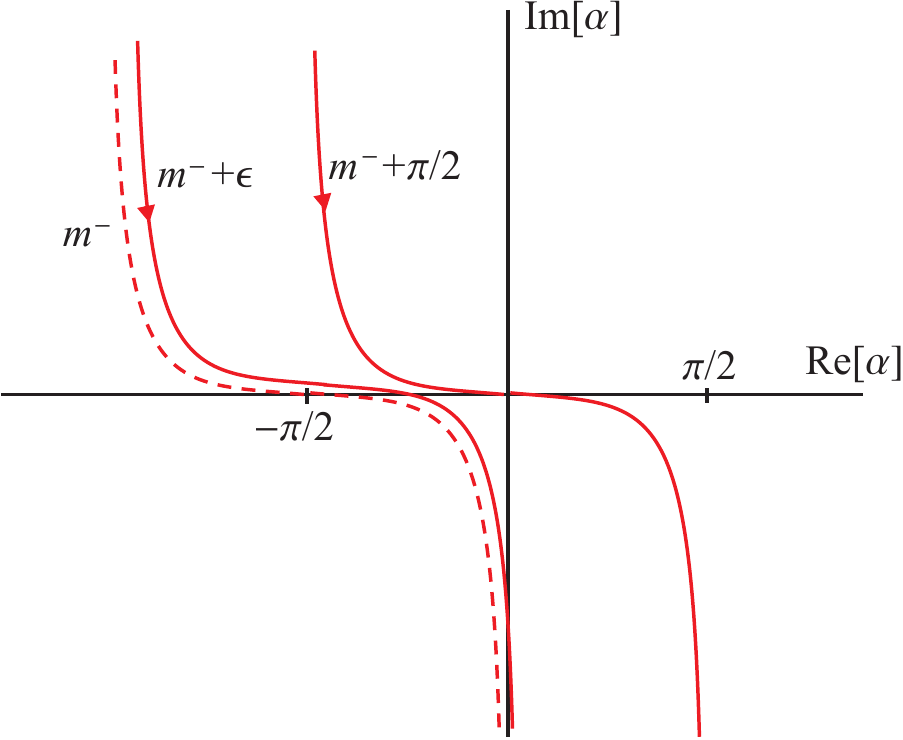}
  \caption{Integration contours in the $\alpha$-planes}
\label{fig:Add01a}
\end{figure}

In the variables $(\alpha_1, \alpha_2)$, the condition (\ref{eq03}) can be rewritten as:
\begin{equation}
\lim_{\epsilon \to 0} \left[
\hat W(\alpha_1 , \alpha_2) - \hat W(-\pi + 2\epsilon-\alpha_1 , \alpha_2)\right] = o(1),
\label{eq03a}
\end{equation}
for $\alpha_1 \in m^- + \epsilon$ and $\alpha_2 \in m^- + \pi / 2$, as $|{\rm Im}[\alpha_2]| \to \infty$.


Now let us show that the condition (\ref{eq03a}) is satisfied for the solution $\hat W$ found above in Section \ref{sec:stencil2odetext}. Using the ODE (\ref{eq7033min}) and the restrictions 
on $a$ and $b$ formulated in Section \ref{sec:stencil2odetext}, find a value of $\lambda$ and a corresponding function $T(\beta) = T(\beta ; \lambda)$. Remembering that $T(\beta) = \Theta(\beta) = \Psi(\beta - \pi)$, construct $\hat W$ as follows:
\begin{equation}
\hat W(\alpha_1 , \alpha_2) = 
T(\alpha_1 + \alpha_2) T(\pi - \alpha_1 + \alpha_2).
\label{eq04}
\end{equation}
The function $T(\beta)$ has branch points on the real axis at $\beta=-\pi/2$ and $\beta=5\pi/2$. Cut the complex $\beta$-plane along the intervals $(- \infty, - \pi/2]$,
$[5\pi / 2, \infty)$ belonging to the real axis and note that $T(\beta)$ is single-valued over this cut plane, where it obeys the symmetry condition (\ref{eq7032b}). It is also worth noting that the arguments $\alpha_1 + \alpha_2$ and 
$\pi - \alpha_1 + \alpha_2$ of the function $T$ in (\ref{eq04})
belong to the cut plane if $\alpha_1 \in m^- + \epsilon$, 
$\alpha_2 \in m^- + \pi /2$.

Upon introducing $J(\alpha_1, \alpha_2; \epsilon)\equiv \hat W(\alpha_1 , \alpha_2) - \hat W(-\pi + 2\epsilon-\alpha_1 , \alpha_2)$, we obtain 
\begin{equation}
J(\alpha_1, \alpha_2; \epsilon)=
T(\alpha_1 + \alpha_2) T(\pi - \alpha_1 + \alpha_2)
-
T(\alpha_1 + \alpha_2 + 2\pi - 2\epsilon) T(-\pi - \alpha_1 + \alpha_2 + 2\epsilon),
\label{eq05}
\end{equation}
and we demand that 
\begin{equation}
\lim_{\epsilon \to 0} J(\alpha_1 , \alpha_2; \epsilon) = o(1) \text{ as } |{\rm Im}[\alpha_2]| \to \infty.
\label{eq05a}
\end{equation}

Let us reconsider the ODE (\ref{eq7033min}) obeyed by $T(\beta)$, and focus on its 
behaviour as $\beta \to + i \infty$. As we established in Appendix~\ref{appB_3}, 
$T$ can be spanned over a basis composed of two functions $F_{1,2}$ such that
\[
F_j (\beta) \underset{\beta\to+i\infty}{\sim} e^{\kappa_j \beta},
\]
where $\kappa_{1,2}$ are given in (\ref{eq:proofs3}). It means that there exist two constants $q_{1,2}$ such that
\begin{equation}
T(\beta) = q_1 F_1 (\beta) + q_2 F_2 (\beta).
\label{eq06}
\end{equation}
Due to the symmetry (\ref{eq7032b}), in the lower half-plane of $\beta$ 
(here we mean the plane cut over the cuts introduced above) we have
\begin{equation}
T(\beta) = q_1 F_1 (2\pi- \beta) + q_2 F_2 (2\pi- \beta)
\label{eq06a}
\end{equation}
with the same $q_1$ and $q_2$. 

Let us now fix $\alpha_1 \in m^- + \epsilon$ and take $\alpha_2 \in m^- + \pi / 2$ such that ${\rm Im}[\alpha_2] > 0$. After noticing that due to the ansatz (\ref{eq:proofs6}) 
\begin{equation}
F_j(\alpha_1 + \alpha_2) F_j(\pi - \alpha_1 + \alpha_2)
-
F_j(\alpha_1 + \alpha_2 + 2\pi) F_j(-\pi - \alpha_1 + \alpha_2)
=0,
\qquad 
j = 1,2,
\label{eq09}
\end{equation}
substitute (\ref{eq06}) into (\ref{eq05a}) to conclude that 
\begin{align}
\lim_{\epsilon \to 0} J(\alpha_1, \alpha_2; \epsilon) &= 
q_1 q_2 [
F_1 (\alpha_1 + \alpha_2) F_2 (\pi - \alpha_1 + \alpha_2) 
+ 
F_2 (\alpha_1 + \alpha_2) F_1 (\pi - \alpha_1 + \alpha_2)]  \label{eq09a}\\
&- q_1 q_2 [F_1 (\alpha_1 + \alpha_2+ 2\pi) F_2 (-\pi - \alpha_1 + \alpha_2) 
-
F_2 (\alpha_1 + \alpha_2+ 2\pi) F_1 (-\pi - \alpha_1 + \alpha_2)]. \nonumber
\end{align}
Hence, according to (\ref{eq:proofs6}) and (\ref{eq:proofs3}), this expression grows as 
\[
\lim_{\epsilon \to 0} J(\alpha_1, \alpha_2; \epsilon) 
\sim \exp\{ (\kappa_1 + \kappa_2) \alpha_2\} = 
\exp\{ i \alpha_2 /2\}. 
\]
In the upper half-plane, i.e.\ $\text{Im}[\alpha_2]\to+\infty$, this is a decay. 

Due to the symmetry $\beta \to 2\pi - \beta$ of $T$ the function (\ref{eq09a}) decays as well in the lower half- plane and hence the integral (\ref{eq02}) is convergent. 
It is important to note that this consideration is based on the fact that $\kappa_1 + \kappa_2 = i /2$ (see (\ref{eq:proofs3})) and that this fact only holds if the equation (\ref{eq7033}) is taken to be minimal.


\subsection{Wave field $u(\tmmathbf{x}, x_3)$ as the solution in the sphero-conal variables}
The following theorem describes the link between the solution built in the paper and 
the classical solution arising from separation of variables in the sphero-conal coordinates.  

\begin{theorem}
\label{prop:SphCon}

Let $T(\beta) = T(\beta; \lambda)$ be a solution 
of the ODE inverse monodromy problem formulated in Proposition~\ref{prop:monodromy} for some triplet $(\lambda,a,b)$, and let the field $u(\tmmathbf{x}, x_3)$, $x_3 > 0$, be defined by (\ref{eq:physicalfieldintegralxi}).
Then 
\begin{align}
u(x_1, 
x_2,
x_3
) &=\frac{-i k e^{i\tfrac{\nu \pi}{2}}}{\pi \sqrt{2\pi}}\times \frac{1}{\sqrt{kr}} H_{\nu+1/2}^{(1)}(k r)  
\, T^R(\tau)\, T^L(\chi), \label{eq:Lame01} 
\end{align}
where $\nu$ is related to $b$ by (\ref{eq:chi04}), $H^{(1)}_{\nu+1/2} (kr)$ is the Hankel function of the first kind of order $\nu+1/2$ and $T^R(\tau)$ and $T^L(\chi)$ are related to $T$ by
\begin{align*}
T^R(\tau) &= T(2 \arcsin(\tfrac{1}{\sqrt{2}} \cos \tau)) \text{ and } T^L(\chi)=T(2 \arccos(\tfrac{1}{\sqrt{2}} \cos\chi)).
\end{align*}
The coordinates $(r,\chi,\tau)$ are defined for $0<\chi<\pi$ and $0<\tau<\pi$ by
\begin{align}
x_1 = x_1(r,\chi,\tau)&=r \left[
\frac{\cos \chi}{\sqrt{2}} \sqrt{1 -\frac{\cos^2 \tau}{2}} -
\frac{\cos \tau}{\sqrt{2}} \sqrt{1 -\frac{\cos^2 \chi}{2}}
\right], \label{eq:Lame02} \\
x_2 = x_2(r,\chi,\tau)&=-r \left[
\frac{\cos \chi}{\sqrt{2}} \sqrt{1 -\frac{\cos^2 \tau}{2}} +
\frac{\cos \tau}{\sqrt{2}} \sqrt{1 -\frac{\cos^2 \chi}{2}}
\right], \label{eq:Lame03} \\
x_3 = x_3(r,\chi,\tau) &= r 
\sin \chi \, \sin \tau.
\label{eq:Lame04}
\end{align}
\end{theorem}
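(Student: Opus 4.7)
The plan is to prove (\ref{eq:Lame01}) via uniqueness of separated solutions of the Helmholtz equation in sphero-conal coordinates, followed by a saddle-point determination of the overall constant.

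First, I would verify that the right-hand side of (\ref{eq:Lame01}) is a valid separated solution. Under the sphero-conal parametrisation (\ref{eq:Lame02})--(\ref{eq:Lame04}), the Helmholtz operator separates: the radial factor obeys a spherical-Bessel equation of order $\nu+1/2$, so $r^{-1/2}H^{(1)}_{\nu+1/2}(kr)$ is the outgoing radial mode, while the angular factors obey the Lam\'e equations (\ref{eq:chi03}) and (\ref{eq:chi07}) with parameters $(\nu,a)$; Proposition~\ref{prop:lame} delivers exactly such $T^L$ and $T^R$, and the symmetries (\ref{eq:chi08})--(\ref{eq:chi09}) force the Dirichlet condition on the faces of QP.

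Second, I would show that the left-hand side $u(\mathbf{x},x_3)$ built from (\ref{eq:physicalfieldintegralxi}) satisfies the same boundary-value problem and selects the same Lam\'e eigenmode. The discussion following (\ref{eq:physicalfieldintegralxi}) already establishes that $u$ obeys the Helmholtz equation off QP, the Dirichlet condition on QP, and the radiation condition; Section~\ref{sec:stencil2odetext} shows that minimality of the ODE (\ref{eq7033min}) enforces the edge conditions. For the crucial angular identification, the observation noted after (\ref{eq:physicalfieldintegralxi}) is decisive: the directivity of $u$ at infinity is proportional to $\hat W|_{\gamma^\star}$, and on $\gamma^\star$ the sphero-conal relations $\cos\chi = \sqrt{2}\cos(\beta_1/2)$ and $\cos\tau = \sqrt{2}\sin(\beta_2/2)$ from Section~\ref{sec:Lame} combined with (\ref{eq7032a}) give $\hat W|_{\gamma^\star} = T^L(\chi)\, T^R(\tau)$. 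Hence the angular content of $u$ lies entirely in the single Lam\'e eigenspace indexed by $(\nu,a)$, and uniqueness of separated solutions with this angular dependence and outgoing radial behaviour forces $u$ to be a constant multiple of the right-hand side of (\ref{eq:Lame01}).

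Third, the overall constant is fixed by an asymptotic match as $kr\to\infty$. Applying the multidimensional saddle-point method to (\ref{eq:physicalfieldintegralxi}), as treated in Appendix~\ref{app:SPM}, produces a leading-order expansion whose amplitude is determined by $\tilde K\hat W$ at the saddle point, a Hessian-determinant factor $\propto 1/r$, and the sheet convention fixed by (\ref{eq:sheetfixing1}). Comparing with the large-argument asymptotics $H^{(1)}_{\nu+1/2}(kr) \sim \sqrt{2/(\pi k r)}\, e^{i(kr-(\nu+1)\pi/2)}$ of the right-hand side then pins down the explicit prefactor $-ik e^{i\nu\pi/2}/(\pi\sqrt{2\pi})$.

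The main obstacle is the saddle-point computation in the third step: the phase $x_3\hat K^{-1} - k(x_1\sin\alpha_1 + x_2\sin\alpha_2)$ lives on a 2-real-dimensional integration surface inside $\mathbb{C}^2$, its stationary points must be identified on $\gamma^\star$, and the steepest-descent deformations must be carried out consistently with the bridge-and-arrow conventions of Appendix~\ref{app:appA} across the branch 2-lines $\sigma_n^\pm$ of $\hat W$. A secondary difficulty in the second step is to rule out contamination of $u$ by other Lam\'e modes; this follows because (\ref{eq7032a}) together with Proposition~\ref{prop:minimal} and Proposition~\ref{prop:monodromy} genuinely selects a single angular eigenfunction pair.
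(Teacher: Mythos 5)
Your overall strategy differs from the paper's in its central structural step. The paper does not verify that the right-hand side of (\ref{eq:Lame01}) solves the boundary-value problem and then appeal to uniqueness; it works entirely from the integral representation (\ref{eq:defunur}). Writing $u(\tmmathbf{\nu},r)=\int_{\tilde\Gamma} w(\tmmathbf{\omega})\, p(kr,\tmmathbf{\nu},\tmmathbf{\omega})\,\psi_{\tmmathbf{\omega}}$, it uses the commutation identity $\tilde\Delta_{\tmmathbf{\nu}}[p]=\tilde\Delta_{\tmmathbf{\omega}}[p]$ (Proposition \ref{prop:fr11}) together with the self-adjointness of the complexified Laplace--Beltrami operator on $\mathfrak{S}$ (Proposition \ref{th:f1f2}) to throw $\tilde\Delta$ onto $w$; since $w=\tfrac{k}{4\pi^2 i}T^R(\tau)T^L(\chi)$ is a Lam\'e eigenfunction with eigenvalue $-\nu(\nu+1)$, the field $u$ is an eigenfunction of $\tilde\Delta_{\tmmathbf{\nu}}$ with the same eigenvalue, so its radial factor obeys the spherical Bessel equation (\ref{eq:Lame09}) \emph{exactly}, not merely asymptotically. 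The radiation condition then kills $H^{(2)}_{\nu+1/2}$, and the constant is fixed, as in your third step, by matching the leading-order saddle-point far field $u\sim -2i\pi e^{ikr}w(\tmmathbf{\nu})/(kr)$ against the large-argument Hankel asymptotics. What the paper's route buys is that the global separated form of $u$ is \emph{derived} from the integral, so no uniqueness theorem is ever invoked.

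The soft spot in your version is precisely the sentence claiming that ``uniqueness of separated solutions with this angular dependence and outgoing radial behaviour forces $u$ to be a constant multiple of the right-hand side.'' Knowing that the far-field directivity of $u$ is proportional to $T^L T^R$ does not by itself determine the radial dependence at finite $r$: to close the argument you must either (i) expand $u$ over the complete orthogonal family of Laplace--Beltrami eigenfunctions on the sphere cut along QP and use orthogonality to eliminate the other modes, or (ii) subtract the candidate solution and invoke a Rellich-type lemma plus unique continuation. Neither ingredient is developed in the paper, and both are delicate here because the admissible fields are singular at the vertex (behaving like $r^{-\nu-1}$) and at the edges, so standard exterior-uniqueness statements do not apply verbatim. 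If you supply one of these (completeness of the Laplace--Beltrami eigenfunctions on the cut sphere, for which \cite{Assier2016} could be cited, would suffice), your argument closes and is a legitimate alternative; otherwise the paper's direct eigenfunction computation is the shorter path. Your first and third steps coincide in substance with the paper's, and the saddle-point burden you identify is genuinely present in both proofs, though the paper uses only the leading order and quotes it rather than carrying out the full Hessian and bridge-and-arrow bookkeeping.
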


\begin{proof}
The proof of this theorem is based on the differential form notations 
introduced in Appendix~\ref{app:Integration} and on the statements proven within this appendix. In particular, we have already shown in Appendix~\ref{app:appA1diffform} that the field $u(\tmmathbf{x}, x_3)$ defined by (\ref{eq:physicalfieldintegralxi}) does satisfy the Helmholtz equation. Hence, by rewriting the Laplace operator in spherical coordinates, we obtain  
\begin{align}
\left[
\frac{\ptl^2 }{\ptl r^2} + \frac{2}{r} \frac{\ptl}{\ptl r}
+
\frac{1}{r^2} \tilde \Delta_{\tmmathbf{\nu}}
\right] u(\tmmathbf{\nu}, r) = 0, 
\label{eq:Lame05}
\end{align} 
where $\tilde{\Delta}_{\boldsymbol{\nu}}$ is the \textit{physical} Laplace-Beltrami operator defined in Appendix \ref{app:LBO} and $\boldsymbol{\nu}$ can be consider as a point on the unit sphere (see (\ref{eq:fr02})).
As discussed in Appendix~\ref{app:Integration} the field $u$ defined by (\ref{eq:physicalfieldintegralxi}) can be rewritten as per (\ref{eq:defunur}) as $u (\tmmathbf{\nu}, r)  =  \int_{\tilde{\Gamma}} w (\tmmathbf{\omega}) \,
p (kr, \tmmathbf{\nu}, \tmmathbf{\omega}) \, \psi_{\tmmathbf{\omega}}$, where $\tilde \Gamma$ is an integration surface that can be slightly deformed in order to ensure the exponential convergence of the integral, as discussed in Appendix \ref{app:SPM}. The point $\boldsymbol{\omega}$ defined in (\ref{eq:fr02}) belongs to the spectral complexified sphere $\mathfrak{S}$ defined in (\ref{eq:fr04}). The function $w:\mathfrak{S}\to\mathbb{C}$, the differential 2-form $\psi_{\boldsymbol{\omega}}$ and plane wave function $p$ are defined in (\ref{eq:fr05}), (\ref{eq:fr06}) and (\ref{eq:fr066}) respectively.

Applying the Laplace-Beltrami operator $\tilde{\Delta}_{\boldsymbol{\nu}}$ to the field $u$, and using Proposition \ref{prop:fr11}, we obtain
\begin{align}
\tilde \Delta_{\tmmathbf \nu} u &= 
\int_{\tilde \Gamma} 
\tilde \Delta_{\tmmathbf \omega}
[p (kr, \tmmathbf{\nu} , \tmmathbf{\omega})] \, 
w(\tmmathbf{\omega}) \, \psi_{\tmmathbf{\omega}},
\label{eq:Lame06}
\end{align}
which, using Proposition \ref{th:f1f2} and the exponential convergence of the integral, leads to
\begin{align}
\tilde \Delta_{\tmmathbf \nu} u &= 
\int_{\tilde \Gamma} 
p (kr, \tmmathbf{\nu} , \tmmathbf{\omega}) \, 
\tilde \Delta_{\tmmathbf \omega}
[w(\tmmathbf{\omega})] \, \psi_{\tmmathbf{\omega}}.
\label{eq:Lame07}
\end{align}

In Section \ref{sec:Lame}, we have demonstrated the link between the inverse monodromy problem 
for equation (\ref{eq7033min}) and the Sturm-Liouville problem for equations
(\ref{eq:chi03}) and (\ref{eq:chi07}). These two equations are the same as that derived in~\cite{kraus}, where they emerged as the result of a separation of variables method applied to the Laplace--Beltrami operator and its associated eigenvalue problem, and are obeyed by $T^R(\tau)$ and $T^L(\chi)$. Hence, since $w(\boldsymbol{\omega})=\tfrac{k}{4\pi^2i}T^R(\tau)\, T^L(\chi)$, it is clear that $w$ has to be an eigenfunction of the operator $\tilde \Delta_{\tmmathbf{\omega}}$. A comparison with \cite{kraus} implies that the associated eigenvalue is $-\nu(\nu+1)$, i.e.\
\begin{align}
\tilde \Delta_{\tmmathbf{\omega}} w (\tmmathbf{\omega})
&=
-\nu (\nu + 1) w (\tmmathbf{\omega}) ,
\label{eq:Lame08}
\end{align}
where  $\nu$ is linked to $b$ by (\ref{eq:chi04}) and the separation constant is $a$. Note that a detailed study of these eigenvalues can be found in \cite{Assier2016}.

Because of this and (\ref{eq:Lame07}), it transpires that $u(\boldsymbol{\nu},r)$ is itself an eigenfunction of the operator $\tilde \Delta_{\tmmathbf{\nu}}$ with the same eigenvalue, and hence, a separation of variables argument implies that
\begin{align}
\left[
r^2\frac{\ptl^2 }{\ptl r^2} + 2r \frac{\ptl}{\ptl r}
+k^2r^2-\nu (\nu + 1) 
\right] u(\tmmathbf{\nu}, r) &= 0. 
\label{eq:Lame09}
\end{align}
This equation (the spherical Bessel equation) can be solved explicitly to show that
\begin{align}
u(\tmmathbf{\nu}, r) &= A(\tmmathbf{\nu}) \frac{1}{\sqrt{kr}} H_{\nu+1/2}^{(1)} (k r) + 
B(\tmmathbf{\nu}) \frac{1}{\sqrt{kr}} H_{\nu+1/2}^{(2)} (k r),
\label{eq:Lame10}
\end{align}
for some unknown functions $A$ and $B$. To find $A$ and $B$, one should consider the  asymptotics of $u$ as $r \to \infty$. First of all, since the radiation condition should be satisfied, we must have $B\equiv 0$. Moreover, by applying the multi-dimensional saddle-point method (see e.g. \cite{Bleistein2012} or \cite{surprise}) to (\ref{eq:Lame07}), and considering solely the leading order, one obtains
\begin{align}
u(\tmmathbf{\nu},r) &= - 2i\pi \frac{e^{i k r}}{kr} w(\tmmathbf{\nu}) + o(\frac{1}{kr}).
\label{eq:Lame11}
\end{align}
Using the large argument asymptotic formula for the Hankel function, (\ref{eq:Lame10}) leads to
\begin{align}
u(\tmmathbf{\nu},r) &= -i \sqrt{\frac{2}{\pi}}e^{-i\frac{\nu \pi}{2}}A(\boldsymbol{\nu}) \frac{e^{ikr}}{kr}+o(\frac{1}{kr}),
\label{eq:Lame111}
\end{align}
and comparing (\ref{eq:Lame111}) and (\ref{eq:Lame11}), we obtain
\begin{align}
A(\tmmathbf{\nu}) = 2\sqrt{2\pi} e^{i\frac{\nu \pi}{2}} w(\tmmathbf{\nu}).
\label{eq:Lame12}
\end{align}
Remembering that because of (\ref{eq:fr05}), we have $w(\boldsymbol{\nu})=\tfrac{k}{4\pi^2i}T^R(\tau)T^L(\chi)$, we can input (\ref{eq:Lame12}) into (\ref{eq:Lame10}) to obtain the expected formula (\ref{eq:Lame01}).
\end{proof}

\begin{remark}
As seen above, it is clear that our solution (\ref{eq:Lame01}) satisfies the Helmholtz equation. Moreover, one should note that with the definition of the variables $(r,\chi,\tau)$, the quarter-plane is described by $\tau=\pi$. Moreover, because of the second equation of (\ref{eq:chi09}), we have $T^R(\pi)=0$, which implies that our solution satisfies the Dirichlet boundary condition on the quarter-plane. Similarly, the other three quadrant of the $x_3=0$ plane are represented by $\chi=\pi$, $\tau=0$ and $\chi=0$ respectively. Moreover, because of Proposition \ref{prop:lame}, we clearly have $\tfrac{T^L}{d\chi}(\pi)=\tfrac{T^L}{d\chi}(0)=\tfrac{T^R}{d\tau}(0)=0$, which implies that our solution satisfies the Neumann boundary condition on the remaining three quadrants. By construction, this solution also satisfies the radiation condition and the edge condition. Hence, as expected, the solution of the type (\ref{eq:Lame01}) can be thought of as a tailored vertex Green's function, i.e.\ a function resulting from a point source placed on the vertex of the quarter-plane, that satisfies all the correct boundary, radiation and edge conditions. Note that this solution, as expected, is singular at the origin and behaves like $r^{-\nu-1}$ as $r\to 0$ and that $\nu$ is directly related to an eigenvalue of the Laplace--Beltrami operator. \RED{The solution (\ref{eq:Lame01}) therefore consists of a discrete set of possible solutions, indexed by the eigenvalues of the Laplace-Beltrami operator. Hence, as expected, the solution to our initial simplified problem \textbf{SFP} is not unique. Given restriction on the growth near the vertex would allow one to select the appropriate solutions from this set.} Such Green's functions, with a source located at a geometric singularity of an obstacle, have proved very useful in diffraction theory. They indeed play a critical role in the derivation of the so-called embedding formulae, which, amongst other achievements, have led to some substantial progress in the quarter-plane diffraction problem (see e.g. \cite{shanin1,Assier2012} \RED{with the modified Smyshlyaev formulae}). \RED{As mentioned in introduction, we note that (\ref{eq:Lame01}) can be recovered using separation of variables in sphero-conal coordinates directly in the physical space as in \cite{kraus}. However with the present article, we want to show how it can be obtained by means of advanced complex analysis from within the Fourier space.} 

\end{remark}


\section{Conclusion}

This paper can in some way be considered as a proof of concept. It shows how important the study of functions of several variables in general, and the additive crossing property in particular, can be to diffraction theory.

More precisely, we started with the physical problem of diffraction of a plane wave by a quarter-plane and its associated functional problem \textbf{FP} (\ref{eq:FPproblem})  arising from our previous work \cite{Assier2018a}. We then considered a simplified functional problem \textbf{SFP} (\ref{eq:SFPproblem}) that crucially retained the additive crossing property that existed in \textbf{FP}, but relaxed some of the other assumptions. Using the angular coordinates (\ref{eq:mappingccord}), we showed that it resulted in the so-called Stencil equation (\ref{eq:stencil}). Thanks to the change of variables (\ref{eq7008}) and a separation of variables argument, we successfully embedded our problem within the rich context of Fuchsian ODEs. The property of these ODEs where exploited to construct explicit wave-fields whose Fourier Transforms are solution to the simplified functional problem \textbf{SFP}. These resulting wave-fields are expressed in terms of some Lam\'e functions and some eigenvalues of the Laplace--Beltrami operator, and correspond to tailored vertex Green's functions.

In order to carry out our arguments, it was necessary to introduce and develop the bridge and arrow notations (see Appendix \ref{app:appA}) that allow us to precisely describe the concept of indentation of contour integrations in $\mathbb{C}^2$. It was also necessary to make use of the differential form theory (see Appendix \ref{app:Integration}), and we note that the latter can be extremely useful when trying to build the theory of Sommerfeld integrals in difficult situations (see e.g.\ \cite{Shanin2019SommerfeldTI}). 

\section*{Acknowledgements}
The work of A.V. Shanin has been supported by Russian Science Foundation grant RNF 14-22-00042.
R.C. Assier would like to acknowledge the support by UK EPSRC (EP/N013719/1). Both authors would like to thank the Isaac Newton Institute for
Mathematical Sciences, Cambridge, for support and hospitality during the
programme ``Bringing pure and applied analysis together via the
Wiener--Hopf technique, its generalisations and applications'' where some
work on this paper was undertaken.
A.V.~Shanin is also grateful to the Simons foundation for the support 
of his participation in this programme. 
 This work was supported by EPSRC (EP/R014604/1). We are also grateful to the Manchester Institute for Mathematical Sciences for its financial support.

\bibliography{biblio}

\begin{thebibliography}{10}

\bibitem{Assier2018a}
R.~C. Assier and A.~V. Shanin.
\newblock {Diffraction by a quarter-plane. Analytical continuation of spectral
  functions}.
\newblock {\em Q. J. Mech. Appl. Math.}, 72(1):51--86, 2019.

\bibitem{Radlow1965}
J.~Radlow.
\newblock {Note on the diffraction at a corner}.
\newblock {\em Arch. Ration. Mech. Anal.}, 19:62--70, 1965.

\bibitem{satterwhite}
R.~Satterwhite.
\newblock {Diffraction by a quarter plane, exact solution, and some numerical
  results}.
\newblock {\em IEEE Trans. Antennas Propag.}, 22(3):500--503, 1974.

\bibitem{kraus}
L.~Kraus and L.~M. Levine.
\newblock {Diffraction by an elliptic cone}.
\newblock {\em Commun. Pure Appl. Math.}, 14(1):49--68, 1961.

\bibitem{analyticalcontifields}
R.~C. Assier and A.~V. Shanin.
\newblock Analytical continuation of two-dimensional wave fields.
\newblock {\em Proc. Roy. Soc. A}, 477:2020081, 2021.

\bibitem{Bleistein2012}
N.~Bleistein.
\newblock Saddle point contribution for an $n$-fold complex-valued integral.
\newblock
  \url{http://citeseerx.ist.psu.edu/viewdoc/download?doi=10.1.1.661.8737&rep=rep1&type=pdf},
  2012.

\bibitem{surprise}
R.~C. Assier and I.~D. Abrahams.
\newblock A surprising observation in the quarter-plane diffraction problem.
\newblock {\em SIAM J. Appl. Math}, 80(1):60--90, 2021.

\bibitem{Falsepoints}
A.~V. Shanin and R.~V. Craster.
\newblock {Removing false singular points as a method of solving ordinary
  differential equations}.
\newblock {\em Euro. Journ. of Appl. Math.}, 13:617--639, 2002.

\bibitem{Olver}
F.W.J. Olver.
\newblock 5 - differential equations with regular singularities; hypergeometric
  and legendre functions.
\newblock In F.W.J. Olver, editor, {\em Asymptotics and Special Functions},
  pages 139 -- 189. Academic Press, 1974.

\bibitem{Bateman3}
H.~Bateman.
\newblock {\em {Higher transcendental functions, volume III}}.
\newblock McGraw-Hill, 1955.

\bibitem{Assier2016}
R.~C. Assier, C.~Poon, and N.~Peake.
\newblock {Spectral study of the Laplace-Beltrami operator arising in the
  problem of acoustic wave scattering by a quarter-plane}.
\newblock {\em Q. Jl Mech. Appl. Math.}, 69(3):281--317, 2016.

\bibitem{shanin1}
A.~V. Shanin.
\newblock {Modified Smyshlyaev's formulae for the problem of diffraction of a
  plane wave by an ideal quarter-plane}.
\newblock {\em Wave Motion}, 41(1):79--93, 2005.

\bibitem{Assier2012}
R.~C. Assier and N.~Peake.
\newblock {On the diffraction of acoustic waves by a quarter-plane}.
\newblock {\em Wave Motion}, 49(1):64--82, 2012.

\bibitem{Shanin2019SommerfeldTI}
A.V. Shanin and A.I. Korolkov.
\newblock Sommerfeld-type integrals for discrete diffraction problems.
\newblock {\em Wave Motion}, 97:102606, 2020.

\bibitem{Shabat2}
B.~V. Shabat.
\newblock {\em {Introduction to complex analysis Part II. Functions of several
  variables}}.
\newblock American Mathematical Society, 1992.

\bibitem{ice}
A.~I. Korolkov K. S.~Kniazeva M.~A.~Mironov, A. V.~Shanin.
\newblock Exchange pulse related to phase synchronism in gas-plate structure.
\newblock {\em arXiv:2007.12687}, 2020.

\bibitem{DMeiss2017}
James~D. Meiss.
\newblock {\em Differential Dynamical Systems, Revised Edition}.
\newblock Society for Industrial and Applied Mathematics, 2017.

\end{thebibliography}
\bibliographystyle{unsrt}

\appendix
\counterwithin{figure}{section}
\section{On Fourier integrals in a 2D domain}\label{app:Integration}

\subsection{Differential form notation}\label{app:appA1diffform}
Upon denoting $r = \sqrt{x_1^2 + x_2^2 + x_3^2}$, let us define $\boldsymbol{\nu}\in\mathbb{R}^3$ and  $\boldsymbol{\omega}\in\mathbb{C}^3$ by
\begin{align}
\tmmathbf{\nu}& =\left(\nu_1,\nu_2,\nu_3\right)=\left( \frac{x_1}{r}, \frac{x_2}{r}, \frac{x_3}{r} \right) \text{ and }  \tmmathbf{\omega}=\left(\omega_1,\omega_2,\omega_3\right)= \left( \frac{\xi_1}{k}, \frac{\xi_2}{k},
\frac{\sqrt{k^2 - \xi_1^2  -\xi_2^2}}{k} \right).  
\label{eq:fr02} 
\end{align}
The points $\tmmathbf{\nu}$ are real, and they belong to the unit sphere. The points $\tmmathbf{\omega}$, however, are complex. We will say that they belong to the \textit{complexified unit sphere}, i.e.\ to the manifold $\mathfrak{S}$ defined by 
\begin{equation}
\mathfrak{S} = \{ (\omega_1 , \omega_2 , \omega_3) \in \mathbb{C}^3 \, | \,
\omega_1^2 + \omega_2^2 + \omega_3^2 = 1  \}. 
\label{eq:fr04}
\end{equation}
This manifold is analytic everywhere; it has complex dimension~2 
and real dimension~4. If $\omega_3 \ne 0$, one can take $(\omega_1, \omega_2)$ as its local coordinates. Otherwise, one should take $(\omega_1 , \omega_3)$
as local coordinates (if $\omega_2 \ne 0$) or take $(\omega_2 , \omega_3)$
(if $\omega_1 \ne 0$). 

One can consider the function $\tilde W (- \xi_1, - \xi)$ being defined on 
some subset of $\mathfrak{S}$. More precisely, we will work with the function $w:\mathfrak{S}\to\mathbb{C}$ defined by
\begin{align}
w(\tmmathbf{\omega}) &\equiv \frac{k \tilde{W} (- \omega_1 k, - \omega_2 k)}{4
  \pi^2 i}.
\label{eq:fr05}
\end{align}
The sign of the arguments of $\tilde W$ is chosen for convenience. Initially $w$ is defined only for $(-\omega_1 k , -\omega_2 k) \in \mathbb{R}^2$, but we should note that the solution $T$ of the ODE (\ref{eq7033min}) is defined on the complex plane almost everywhere
(continued along any contour not passing through the singular points). Thus, the same 
is valid for the combination (\ref{eq7032a}). Therefore, $w$ can be continued 
analytically onto 
$\mathfrak{S}$ with some branching, i.e.\ $\bar \partial w  =0$ almost everywhere on $\mathfrak{S}$. Here we use the differential notation from \cite{Shabat2}. 

Let us introduce the differential 2-form $\psi_{\boldsymbol{\omega}}$ on $\mathfrak{S}$ by
\begin{align}
\psi_{\tmmathbf{\omega}} &\equiv \frac{d \omega_1
\wedge d \omega_2}{\omega_3}. 
\label{eq:fr06}
\end{align}
Note that this form is analytic  everywhere on $\mathfrak{S}$. Indeed, it can be continued to the points of 
$\mathfrak{S}$ with $\omega_3 =0$ by the 
relations
\[
\frac{d \omega_1
\wedge d \omega_2}{\omega_3} = 
-
\frac{d \omega_1
\wedge d \omega_3}{\omega_2}
=
\frac{d \omega_2
\wedge d \omega_3}{\omega_1}
\]
valid on $\mathfrak{S}$.   

Upon introducing the following notation for a plane wave: 
\begin{equation}
p (kr, \tmmathbf{\nu}, \tmmathbf{\omega}) \equiv 
\exp \{ikr\,\tmmathbf{\nu}
  \cdot \tmmathbf{\omega} \}
  =
\exp \{ikr (\nu_1 \omega_1 + \nu_2 \omega_2 + \nu_3 \omega_3 ) \},
\label{eq:fr066}
\end{equation}
one can easily check that as a function of $\tmmathbf{\omega}$, 
$p$ is analytic everywhere on $\mathfrak{S}$. It is also possible to rewrite (\ref{eq:physicalfieldintegralxi})
as follows: 
\begin{equation}
u (\tmmathbf{\nu}, r)  =  \int_{\tilde{\Gamma}} w (\tmmathbf{\omega}) \,
p (kr, \tmmathbf{\nu}, \tmmathbf{\omega}) \, \psi_{\tmmathbf{\omega}}, 
  \label{eq:defunur}
\end{equation}
where 
$\tilde \Gamma$ is the integration surface on 
$\mathfrak{S}$ having real dimension~2 and corresponding to the 
real plane $\Gamma_\xi\times \Gamma_\xi$ in the initial $(\xi_1, \xi_2)$-coordinates. The orientation of the 
integration surface is chosen appropriately. 
Note that (\ref{eq:defunur}) is a common representation of a three-dimensional wave field, i.e.\ it is a general plane wave decomposition.

Since all factors in (\ref{eq:defunur}) are analytic on $\mathfrak{S}$,
one can use Stokes' theorem and deform $\tilde \Gamma$ if necessary. 
Such a deformation should be a homotopy and it should not cross the singularities 
of $w$. In this case, the value of the integral remains unchanged after the contour deformation (see e.g.\ \cite{Shabat2}). 
This fact is the main benefit of using the differential form notations. 
We get more possibilities of changing the integration contour comparatively to 
considering the representation  
(\ref{eq:physicalfieldintegralxi})
as two repeated 1D integrals in $\mathbb{C}^1$.

Note that $p$ is a plane wave obeying the Helmholtz equation 
\begin{equation}
\Delta p + k^2 p  = 0.
\label{eq:peq} 
\end{equation}
Since $p$ is the only part of the integrand of (\ref{eq:defunur}) depending on the physical variables $(x_1,x_2,x_3)$, we can conclude that the field $u$ defined by (\ref{eq:physicalfieldintegralxi}) also obeys the Helmholtz equation.

\subsection{The Laplace--Beltrami operator on $\mathfrak{S}$}\label{app:LBO}

Let us introduce the Laplace-Beltrami operator on $\mathfrak{S}$ by introducing the global complex coordinates
$(\theta_{\boldsymbol{\omega}} , \varphi_{\boldsymbol{\omega}})$ on $\mathfrak{S}$. One possible choice 
is to use the formulae
\begin{equation}
\theta_{\boldsymbol{\omega}} = \arcsin(\sqrt{\omega_1^2 + \omega_2^2}), 
\qquad
\varphi_{\boldsymbol{\omega}} = \arctan(\omega_2 / \omega_1).  
\label{eq:fr07}
\end{equation} 
Indeed, one can take a pair $(\omega_2, \omega_3)$ or a pair $(\omega_3, \omega_1)$
instead of the pair $(\omega_1, \omega_2)$. 
The change of variable (\ref{eq:fr07}) is not biholomorphic everywhere, however one can use the 
ambiguity shown above and take a neighbourhood small enough to make the change of variable locally biholomorphic.

Introduce the Laplace-Beltrami operator by the usual formula. For 
any function $\phi(\tmmathbf{\omega})$ in the neighbourhood 
\begin{equation}
\tilde \Delta_{\tmmathbf{\omega}} [\phi]  (\tmmathbf{\omega}) \equiv 
\frac{1}{\sin \theta_{\boldsymbol{\omega}}} \frac{\ptl}{\ptl \theta_{\boldsymbol{\omega}}} \left( 
\sin \theta_{\boldsymbol{\omega}} \frac{\ptl \phi}{\ptl \theta_{\boldsymbol{\omega}}}\right)
+ \frac{1}{\sin^2 \theta_{\boldsymbol{\omega}}}
\frac{\ptl^2 \phi}{\ptl \varphi_{\boldsymbol{\omega}}^2}.  
\label{eq:fr08}
\end{equation}
One can show that this definition is coordinate invariant, i.e.\ the 
result does not depend on the ambiguity of the coordinate change discussed above.

In the coordinates $(\theta_{\boldsymbol{\omega}}, \varphi_{\boldsymbol{\omega}})$, the form $\psi_{\tmmathbf{\omega}}$ has the following representation:
\begin{equation}
  \psi_{\tmmathbf{\omega}}  =  \sin \theta_{\boldsymbol{\omega}} \, d
  \theta_{\boldsymbol{\omega}} \wedge d \varphi_{\boldsymbol{\omega}} . 
  \label{eq:jacobiansphericalsurface}
\end{equation}
An important property of the Laplace-Beltrami operator
on $\mathfrak{S}$, which is indeed a complexification of a corresponding property 
on a real sphere, is the following: 
\begin{proposition}
\label{th:f1f2}
Let $\Gamma$ be a two-dimensional integration manifold on $\mathfrak{S}$ with a boundary $\ptl \Gamma$ at infinity, 
and let $\phi_1 (\tmmathbf{\omega})$ and $\phi_2 (\tmmathbf{\omega})$ be some functions holomorphic at the points of $\Gamma$.   
If $\phi_1$ and $\phi_2$ decay exponentially at infinity, then
\begin{equation}
    \int_{\Gamma} \phi_1 (\tmmathbf{\omega}) 
    \tilde{\Delta}_{\tmmathbf{\omega}} [\phi_2] (\tmmathbf{\omega}) \,
   \psi_{\tmmathbf{\omega}}  =  \int_{\Gamma} \phi_2
    (\tmmathbf{\omega})  \tilde{\Delta}_{\tmmathbf{\omega}} [\phi_1]
    (\tmmathbf{\omega}) \,  \psi_{\tmmathbf{\omega}} 
    \label{eq7079}
  \end{equation}
\end{proposition}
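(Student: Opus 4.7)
The plan is to establish (\ref{eq7079}) as the complexified analog of Green's second identity on the sphere. Concretely, I would exhibit (locally) an explicit 1-form $\eta$ on $\mathfrak{S}$ such that
$$d\eta = \bigl(\phi_1\,\tilde{\Delta}_{\tmmathbf{\omega}}[\phi_2] - \phi_2\,\tilde{\Delta}_{\tmmathbf{\omega}}[\phi_1]\bigr)\psi_{\tmmathbf{\omega}},$$
and then invoke Stokes' theorem to reduce $\int_{\Gamma}d\eta$ to an integral over $\partial\Gamma$ which vanishes thanks to the exponential decay hypothesis.

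Working in the local chart $(\theta_{\boldsymbol{\omega}},\varphi_{\boldsymbol{\omega}})$ of (\ref{eq:fr07}), with $\tilde{\Delta}_{\tmmathbf{\omega}}$ and $\psi_{\tmmathbf{\omega}}$ given by (\ref{eq:fr08}) and (\ref{eq:jacobiansphericalsurface}), a direct calculation gives the usual divergence identity
\begin{eqnarray*}
\bigl(\phi_1\,\tilde{\Delta}_{\tmmathbf{\omega}}[\phi_2] - \phi_2\,\tilde{\Delta}_{\tmmathbf{\omega}}[\phi_1]\bigr)\sin\theta_{\boldsymbol{\omega}} & = & \frac{\partial}{\partial\theta_{\boldsymbol{\omega}}}\bigl[\sin\theta_{\boldsymbol{\omega}}\bigl(\phi_1\partial_{\theta_{\boldsymbol{\omega}}}\phi_2 - \phi_2\partial_{\theta_{\boldsymbol{\omega}}}\phi_1\bigr)\bigr]\\
& & +\ \frac{\partial}{\partial\varphi_{\boldsymbol{\omega}}}\!\!\left[\frac{\phi_1\partial_{\varphi_{\boldsymbol{\omega}}}\phi_2 - \phi_2\partial_{\varphi_{\boldsymbol{\omega}}}\phi_1}{\sin\theta_{\boldsymbol{\omega}}}\right],
\end{eqnarray*}
so that the 1-form
$$\eta \;=\; \sin\theta_{\boldsymbol{\omega}}\bigl(\phi_1\partial_{\theta_{\boldsymbol{\omega}}}\phi_2 - \phi_2\partial_{\theta_{\boldsymbol{\omega}}}\phi_1\bigr)\,d\varphi_{\boldsymbol{\omega}} \;-\; \frac{\phi_1\partial_{\varphi_{\boldsymbol{\omega}}}\phi_2 - \phi_2\partial_{\varphi_{\boldsymbol{\omega}}}\phi_1}{\sin\theta_{\boldsymbol{\omega}}}\,d\theta_{\boldsymbol{\omega}}$$
satisfies $d\eta = (\phi_1\tilde{\Delta}_{\tmmathbf{\omega}}[\phi_2] - \phi_2\tilde{\Delta}_{\tmmathbf{\omega}}[\phi_1])\psi_{\tmmathbf{\omega}}$, as required.

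Stokes' theorem then yields $\int_{\Gamma}d\eta = \int_{\partial\Gamma}\eta$. Since $\phi_1$ and $\phi_2$ are holomorphic in a neighbourhood of $\Gamma$ and decay exponentially at infinity, Cauchy's estimates transfer that exponential decay to their first partial derivatives; hence $\eta$ itself decays exponentially along $\partial\Gamma$, which forces $\int_{\partial\Gamma}\eta = 0$ and delivers (\ref{eq7079}).

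The main obstacle is the coordinate-chart subtlety flagged just after (\ref{eq:fr07}): the chart $(\theta_{\boldsymbol{\omega}},\varphi_{\boldsymbol{\omega}})$ is neither single-valued nor globally biholomorphic on $\mathfrak{S}$, so the 1-form $\eta$ above is only defined on a proper open subset. I would deal with this by covering $\Gamma$ with an atlas built from the three natural coordinate pairs $(\omega_1,\omega_2)$, $(\omega_1,\omega_3)$ and $(\omega_2,\omega_3)$ listed after (\ref{eq:fr04}), constructing the analogous 1-form on each chart, and stitching the contributions together via a partition of unity. Since the 2-form $(\phi_1\tilde{\Delta}_{\tmmathbf{\omega}}[\phi_2] - \phi_2\tilde{\Delta}_{\tmmathbf{\omega}}[\phi_1])\psi_{\tmmathbf{\omega}}$ is itself globally and invariantly defined on $\mathfrak{S}$ (by invariance of $\tilde{\Delta}_{\tmmathbf{\omega}}$ and $\psi_{\tmmathbf{\omega}}$), the local primitives differ by closed forms on overlaps, the overlap contributions telescope, and Stokes' theorem applies in the large, leaving only the (vanishing) contribution from the boundary at infinity.
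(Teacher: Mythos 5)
Your proof is correct and follows essentially the same route as the paper: the authors introduce exactly the 1-form you call $\eta$ (their $\Omega$ in (\ref{eq:fr09})), note that $d\Omega$ equals the antisymmetrised Laplace--Beltrami combination times $\psi_{\tmmathbf{\omega}}$, and apply Stokes' theorem together with the exponential decay to kill the boundary term. The only difference is cosmetic: where you patch charts with a partition of unity, the paper simply asserts that $\Omega$ is coordinate-independent in the sense discussed after (\ref{eq:fr07}), which amounts to the same resolution of the chart ambiguity.
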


\begin{proof}
Consider the 1-form $\Omega$ in the coordinates $(\theta_{\boldsymbol{\omega}}, \varphi_{\boldsymbol{\omega}})$ defined by 
\begin{equation}
\Omega = \phi_1 \left[\sin \theta_{\boldsymbol{\omega}}  \frac{\ptl \phi_2}{\ptl \theta_{\boldsymbol{\omega}}} d \varphi_{\boldsymbol{\omega}}-
\frac{1}{\sin \theta_{\boldsymbol{\omega}}} \frac{\ptl \phi_2}{\ptl \varphi_{\boldsymbol{\omega}}} d\theta_{\boldsymbol{\omega}}\right]
-
\phi_2 \left[\sin \theta_{\boldsymbol{\omega}}  \frac{\ptl \phi_1}{\ptl \theta_{\boldsymbol{\omega}}} d \varphi_{\boldsymbol{\omega}}-
\frac{1}{\sin \theta_{\boldsymbol{\omega}}} \frac{\ptl \phi_1}{\ptl \varphi_{\boldsymbol{\omega}}} d\theta_{\boldsymbol{\omega}} \right].
\label{eq:fr09}
\end{equation}
A detailed consideration shows that
this form is coordinate-independent in the sense discussed above. 
Note in particular that 
\[
d \Omega = 
\phi_1 (\tmmathbf{\omega}) 
\tilde{\Delta}_{\tmmathbf{\omega}} [\phi_2] (\tmmathbf{\omega})
-
\phi_2 (\tmmathbf{\omega}) 
\tilde{\Delta}_{\tmmathbf{\omega}} [\phi_1] (\tmmathbf{\omega}) .
\]
Applying Stokes' theorem on manifolds (see e.g\ \cite{Shabat2}), we obtain  
\begin{equation}
\int_{\partial \Gamma} \Omega = \int_{\Gamma} d \Omega,
\label{eq:fr10}
\end{equation}
and using the exponential decay at infinity, we get $\int_{\partial \Gamma} \Omega=0$ and hence (\ref{eq7079}) is valid.
\end{proof}

Besides the Laplace--Beltrami operator $\tilde \Delta_{\tmmathbf{\omega}}$,
define a Laplace--Beltrami operator $\tilde \Delta_{\tmmathbf{\nu}}$
on a real sphere. Such operator can be defined explicitly in the usual way using the usual physical spherical variables $(\theta,\varphi)$. The following statement can be checked explicitly in the angular coordinates:

\begin{proposition} 

\begin{equation}
  \tilde{\Delta}_{\tmmathbf{\nu}}  [p (kr, \tmmathbf{\nu}, \tmmathbf{\omega})]
   =  \tilde{\Delta}_{\tmmathbf{\omega}}  [p (kr, \tmmathbf{\nu},
  \tmmathbf{\omega})] .  
  \label{eq7078}
\end{equation}
\label{prop:fr11}
\end{proposition}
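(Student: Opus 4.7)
The plane wave $p(kr,\boldsymbol{\nu},\boldsymbol{\omega}) = e^{ikrs}$ depends on its two vector arguments only through the symmetric inner product $s \equiv \boldsymbol{\nu}\cdot\boldsymbol{\omega}$. The plan is to show, by a direct calculation in the angular coordinates (\ref{eq:fr07}), that both sides of (\ref{eq7078}) reduce to the same explicit function of $(kr, s)$; the equality then follows from the formal symmetry of the setup under the swap $(\theta_{\boldsymbol{\nu}},\varphi_{\boldsymbol{\nu}})\leftrightarrow(\theta_{\boldsymbol{\omega}},\varphi_{\boldsymbol{\omega}})$.

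First, I would write the standard spherical-cosine identity
\[
s = \cos\theta_{\boldsymbol{\nu}}\cos\theta_{\boldsymbol{\omega}} + \sin\theta_{\boldsymbol{\nu}}\sin\theta_{\boldsymbol{\omega}}\cos(\varphi_{\boldsymbol{\nu}}-\varphi_{\boldsymbol{\omega}}),
\]
which is manifestly symmetric in its two pairs of angular variables. Applying the chain rule to $p=e^{ikrs}$ in the Laplace--Beltrami formula (\ref{eq:fr08}) then gives
\[
\tilde{\Delta}_{\boldsymbol{\nu}} p = \left[\, ikr\,\tilde{\Delta}_{\boldsymbol{\nu}} s \;-\; k^2r^2\,|\tilde{\nabla}_{\boldsymbol{\nu}} s|^2 \,\right] p, \qquad |\tilde{\nabla}_{\boldsymbol{\nu}} s|^2 \equiv s_{\theta_{\boldsymbol{\nu}}}^2 + \frac{s_{\varphi_{\boldsymbol{\nu}}}^2}{\sin^2\theta_{\boldsymbol{\nu}}},
\]
with an analogous formula for $\tilde{\Delta}_{\boldsymbol{\omega}} p$. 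A short trigonometric computation from the explicit expression of $s$, using only $\sin^2+\cos^2=1$, yields the two auxiliary identities $\tilde{\Delta}_{\boldsymbol{\nu}} s = -2s$ and $|\tilde{\nabla}_{\boldsymbol{\nu}} s|^2 = 1-s^2$, and hence
\[
\tilde{\Delta}_{\boldsymbol{\nu}} p = -\left[\, 2ikr\,s \;+\; k^2r^2(1-s^2) \,\right] p.
\]

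The second step is to repeat exactly the same computation for $\tilde{\Delta}_{\boldsymbol{\omega}} p$. Since $s$ is invariant under $\boldsymbol{\nu}\leftrightarrow\boldsymbol{\omega}$ and the formula (\ref{eq:fr08}) has an identical shape in either set of coordinates, the term-by-term calculation produces the same right-hand side, giving (\ref{eq7078}). The one point that deserves attention is that $\theta_{\boldsymbol{\omega}},\varphi_{\boldsymbol{\omega}}$ are complex on $\mathfrak{S}$ whereas $\theta_{\boldsymbol{\nu}},\varphi_{\boldsymbol{\nu}}$ are real; however, the key auxiliary identities $\tilde{\Delta} s = -2s$ and $|\tilde{\nabla} s|^2 = 1-s^2$ are algebraic consequences of the spherical-cosine identity and therefore continue holomorphically onto the complexified sphere without modification. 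The main obstacle is thus nothing deeper than carefully carrying out the trigonometric simplifications that produce these two auxiliary identities; no analytic subtlety arises beyond the coordinate-invariance of $\tilde{\Delta}_{\boldsymbol{\omega}}$ already noted just above the statement of Proposition~\ref{prop:fr11}.
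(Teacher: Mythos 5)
Your proof is correct and takes essentially the same route as the paper, which offers no written argument beyond the remark that the identity ``can be checked explicitly in the angular coordinates'' --- precisely the computation you carry out. Your organisation of that check through the two auxiliary identities $\tilde{\Delta}s=-2s$ and $|\tilde{\nabla}s|^2=1-s^2$ (both of which survive complexification, being algebraic in the components of $\boldsymbol{\omega}$ subject to $\omega_1^2+\omega_2^2+\omega_3^2=1$) is sound, and the resulting expression $\tilde{\Delta}_{\boldsymbol{\nu}}p=-\left[2ikrs+k^2r^2(1-s^2)\right]p$ is consistent with the fact that $p$ solves the Helmholtz equation.
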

 

\subsection{Integration contours to compute the integral (\ref{eq:physicalfieldintegralxi})}
\label{app:SPM}

This appendix aims at explaining the slight contour deformations required to ensure that the field integral representation (\ref{eq:physicalfieldintegralxi}) remains exponentially convergent. 
Note first that if $x_3 > 0$ then the integral converges exponentially and no contour deformation is required. Thus, it is only 
necessary to regularise the integral for $x_3 = 0$ such that the resulting function 
is continuous (maybe except at the edges and the vertex of the quarter-plane $x_3 =0$, 
$x_{1,2} > 0$). The regularisation is as follows. Let us assume that $x_{1,2} > 0$ and consider the integral (\ref{eq:physicalfieldintegralxi}) for $x_3 > 0$. Change the 
integration contour so that the integral can be rewritten
\begin{eqnarray}
  u (\tmmathbf{x}, x_3) & = & - \frac{i}{4 \pi^2} 
  \int_\gamma
  \int_\gamma
    \tilde{K} (\xi_1, \xi_2) \tilde{W} (\xi_1, \xi_2) 
    e^{i x_3 \tilde{K}^{-1}   (\xi_1, \xi_2)} 
    e^{- i (\xi_1 x_1 + \xi_2 x_2)} \mathd \xi_1  \mathd
  \xi_2,  \label{eq:physicalfieldintegralxi_1}
\end{eqnarray}
where the contour $\gamma$ is shown in figure~\ref{fig:deform}.
Note that the tails of $\gamma$ are slightly bent into the lower half-plane. 
Such a deformation of the contour can be performed first for $\xi_1$ and then for
$\xi_2$. Cauchy's theorem and the points \ref{item:SFP1}-\ref{item:SFP2} of the simplified functional problem \textbf{SFP} guarantee that the value of the integral is not changing during these deformations. 
The integral (\ref{eq:physicalfieldintegralxi_1}) converges exponentially 
for $x_3 \ge 0$, thus providing the continuity. 
For other signs of $x_{1,2}$ one should bend the tails of the contours (in the upper or lower half-plane) appropriately.  

\begin{figure}[h]
\centering{\includegraphics[width=0.35\textwidth]{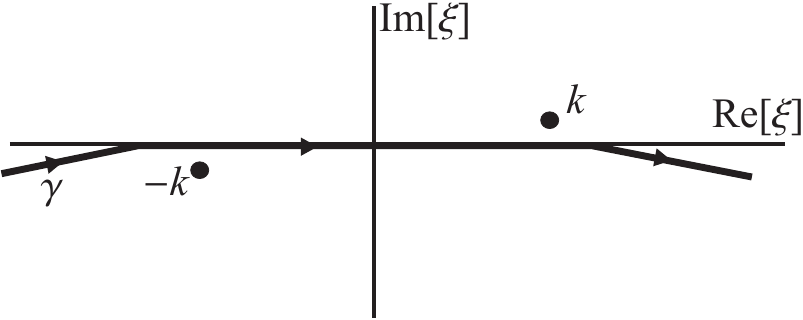}}
   \caption{Deformation of the integration contour for regularisation 
   of (\ref{eq:physicalfieldintegralxi})}
\label{fig:deform}
\end{figure}


\section{``Bridge and arrow'' bypass symbols}\label{app:appA}

\subsection{Motivation}

Consider an integral of the form 
\begin{equation}
  \int_{\boldsymbol{\Gamma}} \mathfrak{f} (\alpha_1, \alpha_2)  \hspace{0.17em} d \alpha_1 \wedge
  d \alpha_2 \label{eq0101}
\end{equation}
where $\boldsymbol{\Gamma}$ is a smooth (or a piece-wise smooth) oriented manifold of 
real dimension~2.
\RED{For the purpose of this appendix, we adopt here the Cartan formalism of differential forms, external product, and integration over manifolds.}
\RED{The manifold $\boldsymbol{\Gamma}$ 
is close to the plane of real $\alpha_1$ and $\alpha_2$
in the sense explained in the next paragraph.}
The integrand $\mathfrak{f} (\alpha_1, \alpha_2)$ is assumed to be an analytic
function of the arguments everywhere except at its \tmtextit{singularities}
$\sigma_j$,
i.e.\ the zero sets of some analytic functions $g_j$:
\begin{equation}
  \sigma_j = \left\{ (\alpha_1, \alpha_2) \in \mathbbm{C}^2 \text{ such that }
  g_j (\alpha_1, \alpha_2) = 0 \right\} . \label{eq0102}
\end{equation}
These sets are also manifolds of real dimension~2, and they can represent
either polar sets or branch sets of~$\mathfrak{f}$. 
\RED{
The functions $g_j(\alpha_1 , \alpha_2)$ are assumed to be ``real'' in the following 
sense: the partial derivatives $\ptl_{\alpha_1} g_j$ and $\ptl_{\alpha_2} g_j$
are real if $\alpha_1$ and $\alpha_2$ are real. 
The simplest (and quite common) case of a ``real'' function is when  $g_j (\alpha_1, \alpha_2)$ is real for real $(\alpha_1, \alpha_2)$.  
``Real'' functions 
possess an important topological property. The intersections of the $\sigma_j$ with
the real plane $(\alpha_1 , \alpha_2)$,
denoted by $\sigma_j'$ (and sometimes called the real trace of $\sigma_j$)
are sets of dimension~1 (i.e.\
a line or several lines), while in the general case such intersection 
is a set of points. We remind that, generally, an intersection of two 
manifolds of real dimension~2 in the space of real dimension~4 
is a manifold of real dimension~0.}

$\boldsymbol{\Gamma}$ can be
parametrized by the values \tmtextrm{Re}$[\alpha_1]$ and
\tmtextrm{Re}$[\alpha_2]$ by equations of the type
\begin{equation}
  \mathrm{Im} [\alpha_1] = \eta (\mathrm{Re} [\alpha_1], \mathrm{Re}
  [\alpha_2]), \qquad \mathrm{Im} [\alpha_2] = \zeta (\mathrm{Re} [\alpha_1],
  \mathrm{Re} [\alpha_2]) \label{eq0104}
\end{equation}
where $\eta$ and $\zeta$ are {\tmem{small}} real continuous functions equal to
zero everywhere except in the narrow vicinity of the branch sets $\sigma_j$.
\RED{These functions are chosen in such a way that 
$\boldsymbol{\Gamma}$ and 
$\sigma_j$ do not intersect: }
\[ 
\boldsymbol{\Gamma} \cap \sigma_j = \emptyset \text{ for all $j$}.
\] 
\RED{This is necessary for the integral (\ref{eq0101}) to be well-defined.} 

The detailed shape of the functions $\eta$ and $\zeta$ is not important due
to the generalisation of the Cauchy theorem  in
$\mathbbm{C}^2$:
the surface of integration can be deformed provided the singularities are not crossed
 (see e.g.{\cite{Shabat2}}). However, it is
important, whether $\boldsymbol{\Gamma}$ passes {\tmem{above}} or {\tmem{below}} the singularity
sets $\sigma_j$. Of course, what is meant by {\tmem{above}} and {\tmem{below}}
is not clear in $\mathbbm{C}^2$. The aim of this appendix is to introduce
these concepts precisely.

We are here trying to build the $\mathbbm{C}^2$ equivalent of a situation
that is very common in $\mathbbm{C}$ in diffraction theory. Often when for
example taking an inverse Fourier transform, one has to carefully make sure
that the singularities of the integrand, located on the real axis, are not hit
by the contour of integration. In order to do so, one should {\tmem{indent}}
the contour either above or below the singularities (poles or branch points).
The choice of indentation (above or below) is made based on physical
considerations (typically linked to the radiation condition).


Indeed, the whole technique is developed for 
the integral representation (\ref{eq:physicalfieldintegralxi}) of the wave field:
\begin{align}
u(\tmmathbf{x} , x_3) = 
-\frac{i k}{4\pi^2}
\int_{\Gamma_\alpha \times \Gamma_\alpha}
 & \exp\left\{ i k \left(- x_1 \sin \alpha_1 -x_2 \sin \alpha_2 + 
x_3 \sqrt{1 - \sin^2 \alpha_1 - \sin^2 \alpha^2}\right)  \right\}
\times \nonumber \\
& \frac{\hat W (\alpha_1 , \alpha_2) \cos \alpha_1 \, \cos \alpha_2}{
\sqrt{1 - \sin^2 \alpha_1 - \sin^2 \alpha_\RED{2}}
}
\, d\alpha_1 \, \wedge \, d \alpha_2
\label{eq:BA01}
\end{align}
where $\hat W$ is defined in Section~\ref{sec:formulationangularalpha} and the contour of integration is a product of two samples of $\Gamma_\alpha$, also defined in Section~\ref{sec:formulationangularalpha} and illustrated in figure~\ref{fig:fromxitoalphacontour}, right. 
The contour $\Gamma_{\alpha}$ is understood to be oriented from left to right.
Our aim is to explore the possibility of deformation of the 
surface of integration.  

In \RED{the case studied in the paper}, the branch sets are
complexified lines that can be generically defined by
\begin{equation}
  g_j (\alpha_1, \alpha_2) \equiv \mathsf{a}_j \alpha_1 + \mathsf{b}_j \alpha_2 + \mathsf{c}_j .
  \label{eq0103}
\end{equation}
\RED{The ``reality'' condition is provided by the 
requirements for the 
coefficients $\mathsf{a}_j$, $\mathsf{b}_j$ and $\mathsf{c_j}$
to be real.}

\RED{Indeed, the integral (\ref{eq:BA01}) fits the form (\ref{eq0101}).
Some 
other examples of integrals of the type (\ref{eq0101}), for which one needs to define 
the mutual location of the singularities and the integration surface,
are given, in \cite{ice}, for instance.  
}


\subsection{The bridge and arrow notation}

We will now introduce a diagrammatic notation, the ``bridge and arrow''
notation, in order to precisely illustrate how such manifold $\boldsymbol{\Gamma}$ is
located with respect to a particular singular set~$\sigma_j$. 

\RED{Consider a neighbourhood $\mathcal{U}$ of some real point 
$\tmmathbf{\alpha}^{\star} = (\alpha_1^{\star},
\alpha_2^{\star})$ 
belonging to $\sigma_j$ (and hence belonging to $\sigma_j'$ since it is real), but not belonging to any other singularity. 
Let us define 
\[
\mathsf{a} = \ptl_{\alpha_1} g_j(\alpha_1^{\star}, \alpha_2^{\star}),
\qquad 
\mathsf{b} = \ptl_{\alpha_2} g_j(\alpha_1^{\star}, \alpha_2^{\star}),
\]
and assume that at least one of the two values $\mathsf{a}$ and $\mathsf{b}$ is non-zero, 
i.e.\ that $\tmmathbf{\alpha}^{\star}$ is a regular point of $\sigma_j$.
Because of the ``reality'' condition of $g_j$, the values $\mathsf{a}$ and $\mathsf{b}$ 
are real. 
In the linear approximation, the set $\sigma_j$ can be approximately described as a linear function 
of the type (\ref{eq0103}) near $\tmmathbf{\alpha}^{\star}$. 
}

Introduce a local coordinate system $(z, t)$, with the origin at
the point $\tmmathbf{\alpha}^{\star}$
defined for arbitrary nonzero {\em real\/} constants $\beta'$, $\beta''$, and a  possibly zero 
{\em real} constant $\beta'''$ by
\RED{
\begin{equation}
  z = \beta' g_j(\alpha_1 , \alpha_2),
  \qquad 
  t = \beta'' (
  - \mathsf{b} (\alpha_1 - \alpha_1^{\star} )
  +
  \mathsf{a}   (\alpha_2 - \alpha_2^{\star} )) + \beta''' z.  
  \label{eq0106} 
\end{equation}
The singularity $\sigma_j$ corresponds to $z = 0$, thus, $z$ is the 
``transversal'' coordinate, and $t$ is the ``tangential'' one.  
}

\RED{
The coordinate change $(\alpha_1 , \alpha_2) \to (z, t)$ is, indeed, 
a (local) transform between two pairs of {\em complex\/} coordinates.   
At the same time, at least in the linear approximation near $\tmmathbf{\alpha}^{\star}$, it can be considered as a {\em real\/} coordinate transform in the 
real plane $(\alpha_1, \alpha_2)$. 
Thus, one can draw the coordinate lines for real $(z , t)$ and the basis vectors 
${\tmmathbf{e}}_z$, ${\tmmathbf{e}}_t$ in the plane.       
The vector ${\tmmathbf{e}}_t$ is directed along $\sigma_j'$, and the vector     
${\tmmathbf{e}}_z$ is transversal (not necessarily normal) to $\sigma_j'$.
Their directions depend on the function $g_j$ and the coefficients
$\beta'$, $\beta''$, $\beta'''$.
Both vectors are real.  
These vectors are shown in figure~\ref{addit},~left.
}

\begin{figure}[h]
\centering  \includegraphics[width=.7\textwidth]{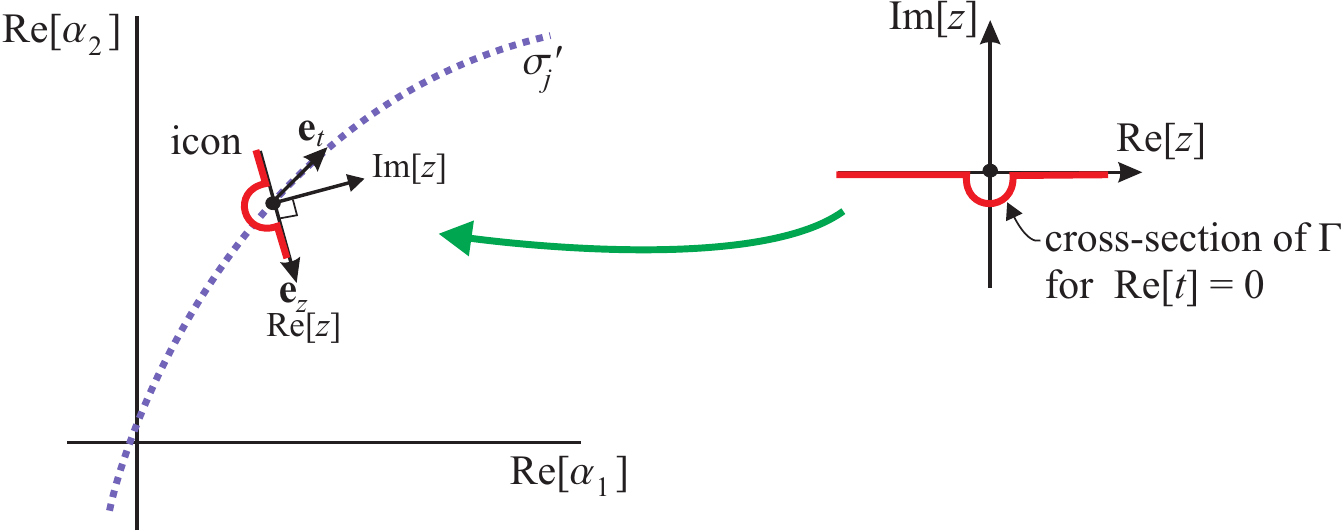}  
\caption{Bridge and arrow notation. }
\label{addit}
\end{figure}

\RED{

Let us now fix the value ${\rm Re}[t] = 0$, and consider 
a set of points belonging to $\mathcal U \cap \boldsymbol{\Gamma}$
and ${\rm Re}[t] = 0$. This is a line that can be projected onto the 
$z$-coordinate complex plane.
To define the integral (\ref{eq0101}) correctly, 
this line should be close to the real axis of $z$, but 
pass above or below the point $z= 0$ to avoid an intersection between 
$\boldsymbol{\Gamma}$ and $\sigma_j$.    
This cross-section is shown in figure~\ref{addit},~right (the case of 
$\boldsymbol{\Gamma}$ passing below $z = 0$ is shown).

Now let us draw an {\em icon\/} of the right part of the figure on the left part of it. 
For this, use the vector ${\tmmathbf{e}}_z$ as the axis of ${\rm Re}[z]$.
Draw the axis of ${\rm Im}[z]$ such that 
the angle between the axes ${\rm Re}[z]$ and ${\rm Im}[z]$
is $\pi / 2$ anticlockwise. Then draw the same cross-section 
of $\boldsymbol{\Gamma}$ in the new coordinates in the left part of the figure. 
This is the ``bridge and arrow'' notation 
at the point $\tmmathbf{\alpha}^{\star}$. At the moment, this notation
depends on the function $g_j$ and on the coefficients 
$\beta'$, $\beta''$, $\beta'''$. Moreover, it is only currently defined at 
a single real point of $\sigma_j'$.
However, we will show below that this notation is very convenient in the sense that it is actually independent of the coefficients 
$\beta'$, $\beta''$, $\beta'''$, unchanged by a scaling of $g_j$, and can be rotated and translated freely along~$\sigma_j'$.  

}


\RED{

\subsection{Usage rules for the bridge and arrow notations}

Here we list the properties of the bridge and arrow notations. 
The proofs are elementary and are based mainly on the continuity 
argument. 

\vskip 6pt

{\bf a. Consistency of the bridge and arrow notation. }
The properties {\bf a1}, {\bf a2} and {\bf a3} listed below are related to a single point 
$\tmmathbf{\alpha}^{\star}$. In all formulations
we assume that the function  $\mathfrak{f}$, its singularities 
$\sigma_j$ and the surface of integration $\boldsymbol{\Gamma}$
are known.  

\vskip 6 pt
\noindent
{\bf Property a1.} The bridge and arrow notation is defined uniquely if the 
curve $\sigma_1'$ and the 
direction ${\tmmathbf{e}}_z$ are known. This means that the bridge 
does not change if, say, the function $g_j$ is multiplied by a coefficient, 
or the coefficients $\beta'$, $\beta''$, $\beta'''$ are changed, while the 
direction of ${\tmmathbf{e}}_z$ remains the same. 

This statement means that the 
bridge relation is formally consistent. One can draw the axes 
${\rm Re}[z]$ and ${\rm Im}[z]$ (they are the ``arrows'') and the 
``bridge'' denoting the bypass without specifying the formula for 
$g_j$ and the coefficients $\beta'$, $\beta''$, $\beta'''$.    

\vskip 6 pt
\noindent
{\bf Property a2.} If the direction of ${\tmmathbf{e}}_z$
is changed continuously (rotated), the bridge symbol is rotated in the
same way, providing that ${\tmmathbf{e}}_z$ does not become tangential
to $\sigma_j$ during this rotation. 
Such a rotation is shown in figure~\ref{addit2}. 

\begin{figure}[h]
\centering  \includegraphics[width=0.45\textwidth]{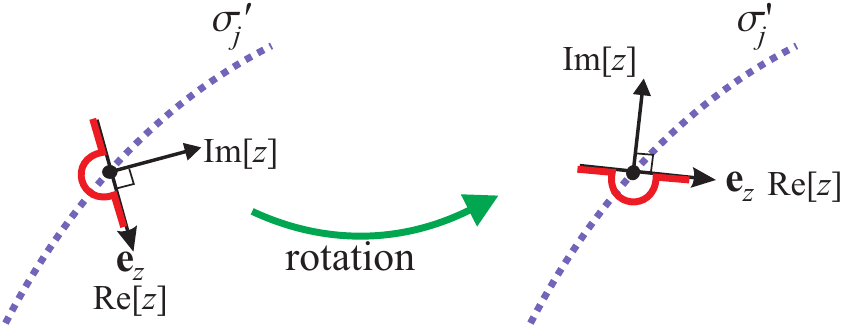}  
\caption{Rotation of the bridge and arrow symbol. }
\label{addit2}
\end{figure}

This property means that it is not necessary to establish the bridge notation for 
the whole continuum of directions of  ${\tmmathbf{e}}_z$. Instead, 
one can take two arbitrary directions (one pointing to the right of $\sigma_j$, and another pointing to the left, say) and define the bridge notation for them.
The bridges for all other directions of ${\tmmathbf{e}}_z$
can be obtained by rotations. 

\vskip 6 pt
\noindent
{\bf Property a3.} The bridge notation for two opposite directions 
${\tmmathbf{e}}_z$ is the same 
(see figure~\ref{addit3}). One can see that if, say, a contour passes below zero
in the variable $z$, then it passes above zero in the variable~$-z$.
Thus, the ``arrow'' changes its direction, while the image of the ``bridge''
remains the same. 
}

\begin{figure}[h]
\centering  \includegraphics[width=0.45\textwidth]{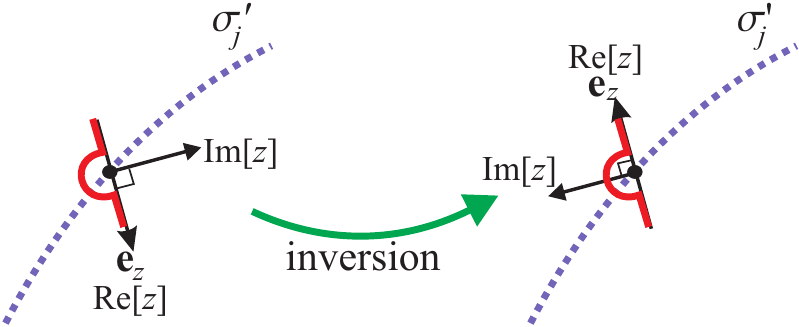}  
\caption{Inversion of the bridge and arrow symbol. }
\label{addit3}
\end{figure}

\RED{
This statement has two following corollaries. First, for each point  
$\tmmathbf{\alpha}^{\star}$ one can build the bridge for a single 
direction ${\tmmathbf{e}}_z$ (instead of two). The bridges for all other
transversal directions become known. Second, one can omit arrows in the 
bridge and arrow notation. Thus, it becomes a ``bridge'' notation,
and a single bridge symbol is necessary to describe the mutual 
position of the integration surface $\boldsymbol{\Gamma}$
and the singularity $\sigma_j$ at each particular point 
of $\sigma_j$.
 
Still, in the paper we continue to use the arrows to remind the reader that 
the bridge and arrow notation is just an icon of a cross-section of 
 $\boldsymbol{\Gamma}$ by a transversal complex variable.     
}

\vskip 6 pt
\RED{
{\bf b. Continuity property}
Consider a continuous fragment of 
$\sigma_j'$ not intersecting any other singularities.  
The bridge symbol changes continuously as $\tmmathbf{\alpha}^{\star}$
moves along this fragment and ${\tmmathbf{e}}_z$ cannot become 
tangential to $\sigma_j'$ during this change.
This situation is shown in figure~\ref{addit4}.
}   

\begin{figure}[h]
\centering  \includegraphics[width=0.2\textwidth]{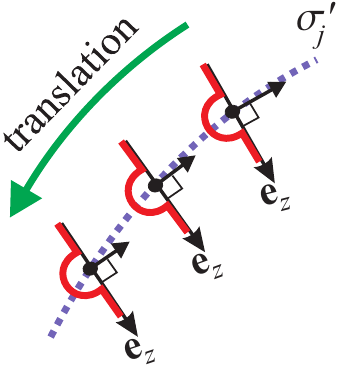}  
\caption{Carrying the bridge symbol along a fragment of $\sigma_j$.}
\label{addit4}
\end{figure}

\RED{
The corollary of this statement is that it is not necessary to find the 
bridge symbol for each point of $\sigma_j'$. Instead, one should 
find the bridge symbol for a single point of a fragment of  $\sigma_j'$ and extend 
it by continuity to all other points.  
}

\vskip 6pt
\RED{   
{\bf c. Intersection properties} 
 
\vskip 6pt
\noindent
{\bf Property c1. } Let two singularities, $\sigma_j$ and $\sigma_m$ have an intersection 
at a real point $\tmmathbf{\alpha}^{\star}$, and let the intersection be transversal. This means that 
the gradient vectors $(\ptl_{\alpha_1} g_j(\tmmathbf{\alpha}^{\star}) , \ptl_{\alpha_2} g_j(\tmmathbf{\alpha}^{\star}))$ 
and
$(\ptl_{\alpha_1} g_{m}(\tmmathbf{\alpha}^{\star}) , \ptl_{\alpha_2} g_{m}(\tmmathbf{\alpha}^{\star}))$ 
are not proportional. Then the bridge symbols for $\sigma_j'$ and 
$\sigma_{m}'$ can be chosen independently. The bridge symbol for $\sigma_j'$
can be carried continuously through the crossing point    
$\tmmathbf{\alpha}^{\star}$. The same is valid for the bridge symbol 
for the bridge symbol for $\sigma_{m}'$ (see figure~\ref{addit5}).
}

\begin{figure}[h]
\centering  \includegraphics[width=0.25\textwidth]{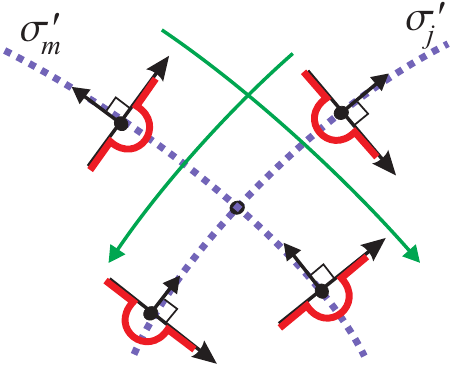}  
\caption{A transversal crossing of singularities.}
\label{addit5}
\end{figure}

\RED{
\vskip 6pt
\noindent
{\bf Property c2. } Let two singularities, $\sigma_j$ and $\sigma_m$ have an intersection 
at a real point $\tmmathbf{\alpha}^{\star}$, and let the intersection be quadratic. This means that the curves $\sigma'_j$ and $\sigma'_m$ are touching tangentially in the 
real plane, and the touch has order~2. Then the bridge notation for the 
the singularities $\sigma_j'$ and $\sigma_m'$ are 
not independent
and determined by the 
bridge notation at the touch point. The bridge notation is translated continuously 
from the touch point along each of the singularities. 
}   
   
\begin{figure}[h]
\centering  \includegraphics[width=0.35\textwidth]{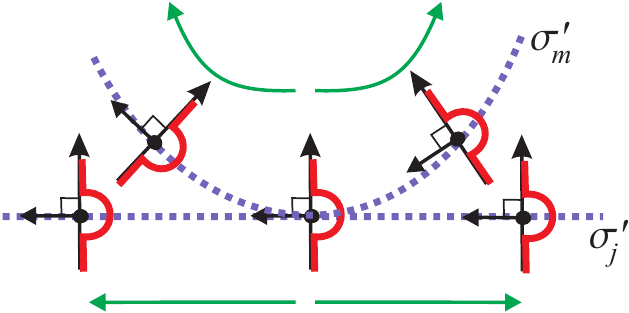}  
\caption{A touch of singularities.}
\label{addit6}
\end{figure}

\RED{
The proof of this fact is non-trivial and is based on the possibility to introduce the tangential / transversal coordinates for $\sigma_j'$ and $\sigma_m'$
in such a way that the bypass in the transversal plane 
at the touch point is common for both 
singularities. 

\vskip 6pt
\noindent
{\bf Remark~about~Property~c2.} 
Topologically, the situation of touching singularities is much more complicated
than that of transversally crossing singularities. It may happen that not all 
four lines stemming from the touch point correspond to a singular behaviour of the function. 
An example is the function 
\begin{equation}
\mathfrak{f} (\alpha_1 , \alpha_2)= \sqrt{\sqrt{\alpha_2} - \alpha_1}. 
\label{ead7}
\end{equation}
This function has two singularities:
\[
\sigma_1 : \ \alpha_2 = 0, 
\qquad \qquad
\sigma_2 : \ \alpha_2 - \alpha_1^2 = 0. 
\] 
For a certain sheet of $\mathfrak{f}$, the diagram of singularities looks as shown in figure~\ref{addit7}.       
}

\begin{figure}[h]
\centering  \includegraphics{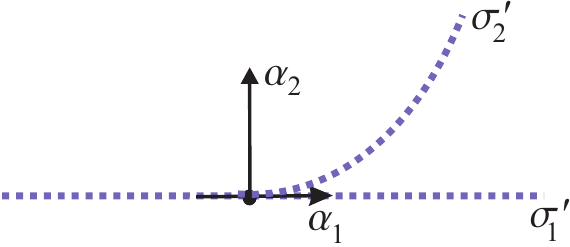}  
\caption{Singularities of the function (\ref{ead7})}
\label{addit7}
\end{figure}

\RED{
{\bf d. Topological properties.} 

\noindent
{\bf Property d1.} Let $\sigma_j'$ be a
closed smooth curve.
An example is a circle defined by the function 
\[
g_j (\alpha_1 , \alpha_2) = \alpha_1^2 + \alpha_2^2  - d^2, 
\]
where $d$ is a real parameter. 
Let the surface $\boldsymbol{\Gamma}$ be close to the real plane
in the sense introduced above. 
Let be $\boldsymbol{\Gamma} \cap \sigma_j = \emptyset$.
Then, 
the mutual position 
of $\boldsymbol{\Gamma}$ and $\sigma_j$ is defined by a single 
bridge symbol (see figure~\ref{addit8}, left or right, for two possible 
values of this symbol). 
Moreover, there exists a real point inside $\sigma_j'$ through which 
$\boldsymbol{\Gamma}$ passes. 
}

\begin{figure}[h]
\centering  \includegraphics[width=0.6\textwidth]{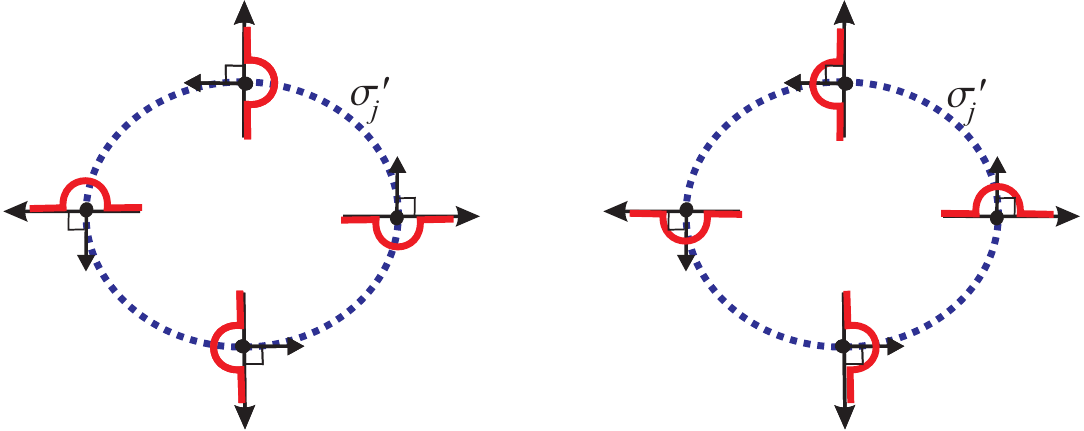}  
\caption{Two possible mutual positions of $\boldsymbol{\Gamma}$ and $\sigma'_j$}
\label{addit8}
\end{figure}

\RED{The proof is based on the study of the vector field $\boldsymbol{v}=(\text{Im}[\alpha_1],\text{Im}[\alpha_2])=(\eta,\zeta)$ over the circle $\sigma_j'$. By definition of $z$, we can show that $\text{Im}[z]\approx  \beta'(\mathsf{a}^2+\mathsf{b}^2) \boldsymbol{v}\cdot\boldsymbol{n}$, where $\boldsymbol{n}=\nabla_{\boldsymbol{\alpha}}g_j/|\nabla_{\boldsymbol{\alpha}}g_j|$ is the unit normal to the circle in the $(\text{Re}[\alpha_1],\text{Re}[\alpha_2])$ plane. Since $\text{Im}[z]$ is continuous and never zero on the circle, $\boldsymbol{v}\cdot\boldsymbol{n}$ should hence have a constant sign and $\boldsymbol{v}$ has to encircle the origin (i.e.\ its index about $\sigma_j'$ is not zero). By continuity, there should exist a point inside $\sigma_j'$ at which the vector field is equal to zero, see e.g.\ Th.~6.23 in \cite{DMeiss2017}, and hence $\boldsymbol{\Gamma}$ passes through this real point.}

\RED{
The statement is equivalent to the fact that 
the linking number of $\boldsymbol{\Gamma}$ and $\sigma_j'$ is equal to~$\pm 1$.
The surface $\boldsymbol{\Gamma}$ cannot be 
deformed in such a way that it is
carried far enough from 
the real plane. Note that such a deformation can be necessary 
for an asymptotic evaluation of the integral (\ref{eq0101}) (see \cite{ice}).    
}

\RED{
\noindent
{\bf Property d2.} Let us consider a (possibly curvilinear) polygon formed by several 
traces of singularities~$\sigma_j'$ and assume that the surface 
$\boldsymbol{\Gamma}$ does not intersect any of the singularities $\sigma_j$. 
Introduce the concept of matching and unmatching bypasses across the 
boundaries of the polygon. For definiteness, direct all the vectors 
${\tmmathbf{e}}_z$ {\em outwards\/} the polygon. The bypass symbols are 
matching if all brigdes are of ``above'' type, or all bridges 
are of ``below'' type. Otherwise, the bypass symbols are unmatching. 
Indeed, there are two possible matching sets of bypasses. 
Examples of matching and unmatching bypasses are shown in 
figure~\ref{addit9}
for the case of the polygon being a triangle. 
}

\begin{figure}[h]
\centering  \includegraphics[width=0.6\textwidth]{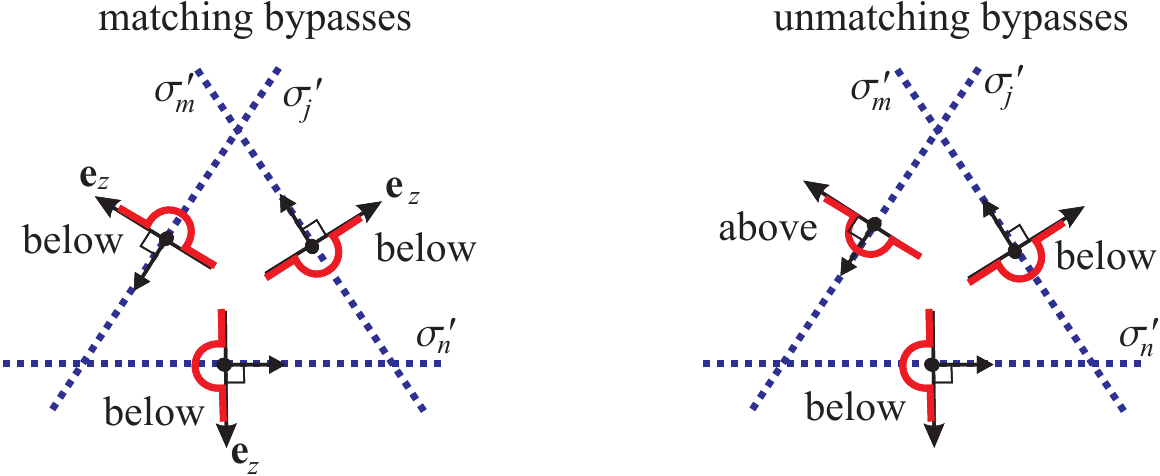}  
\caption{Matching and unmatching bypasses for a triangle}
\label{addit9}
\end{figure}

\RED{
The statement is as follows. If the bypasses are matching, there exists a 
(real) point inside the polygon, through which   
$\boldsymbol{\Gamma}$ passes. 

The proof is similar to that of Property~{\bf d1}.
}


\RED{
\subsection{Bridge and arrow notation for contours}

In the current paper we use the bridge and arrow notations 
not only for surfaces of integration, but for contours as well. Let 
$\gamma$ be a contour (a smooth oriented 1D manifold) located near the 
real plane. Namely, let the contour be parametrized by a real variable 
$\tau \in [0,1]$:
$(\alpha_1 (\tau) , \alpha_2 (\tau))$. Let the real gradient vector
$
\left( 
\ptl_\tau {\rm Re}[\alpha_1],
\ptl_\tau {\rm Re}[\alpha_2]
\right)
$
be non-zero for all $\tau$. We assume that  the vector 
$
({\rm Im}[\alpha_1 (\tau)],{\rm Im}[\alpha_2 (\tau)])
$ 
is small , and the vector   
$
\left( 
\ptl_\tau {\rm Im}[\alpha_1],
\ptl_\tau {\rm Im}[\alpha_2]
\right)
$
is finite.

Let $\sigma_j$ be defined as above by (\ref{eq0102}).
The intersection of $\gamma$ and $\sigma_j$ should be empty. Our aim is to describe 
graphically the mutual position of $\gamma$ and $\sigma_j$ 
near some real point $\tmmathbf{\alpha}^{\star} \in \sigma_j'$.

Introduce the variable $z$ by the first formula of (\ref{eq0106}). 
Draw the projection of the contour $\gamma$ on the $z$-plane near 
$\tmmathbf{\alpha}^{\star} \in \sigma_j'$, i.~e.\ the curve 
$
\beta' g_j (\alpha_1 (\tau) , \alpha_1 (\tau))
$
for $\tau$ belonging to some segment. This curve can be used as 
the red curve in the right side of figure~\ref{addit}.
Thus, one can define the bridge notation for $\gamma$ and $\sigma_j$, 
again, by drawing an icon of the $z$-plane near the point 
$\tmmathbf{\alpha}^{\star} \in \sigma_j'$.
Note that the vector ${\tmmathbf{e}}_z$ in this case is proportional to 
$
\left( 
\ptl_\tau {\rm Re}[\alpha_1],
\ptl_\tau {\rm Re}[\alpha_2]
\right) 
$.

Let the contour $\gamma$ be continuously deformed without crossing some singularities. 
One can see that analogs of the properties {\bf a1}, {\bf a2}, {\bf a3}, {\bf b}, and {\bf c1} remain valid in this case.
}

\subsection{Bypass symbols for the considered problem}

More specifically, in Section \ref{sec:formulationangularalpha}, because of
the integral (\ref{eq:BA01}), we are interested in the
behaviour of the function $\hat{W} (\alpha_1, \alpha_2)$ in the close vicinity
of the real square $\boldsymbol{\Gamma'} = \left[ - \frac{\pi}{2}, \frac{\pi}{2}
\right] \times \left[ - \frac{\pi}{2}, \frac{\pi}{2} \right]$. On this real
set we have shown in Proposition \ref{prop:sqaureroots} that $\hat{W}
(\alpha_1, \alpha_2)$:
\begin{enumerate}[label=\roman*)]
  \item is singular on the real trace of the complexified line $\sigma_{- 1}^+
  = \left\{ \tmmathbf{\alpha} \in \mathbbm{C}^2, \alpha_1 + \alpha_2 = -
  \frac{\pi}{2} \right\}$, where it changes its sign (it behaves like a square
  root);
  
  \item is regular on the real trace of the complexified lines $\sigma_0^- =
  \left\{ \tmmathbf{\alpha} \in \mathbbm{C}^2, \alpha_1 - \alpha_2 =
  \frac{\pi}{2} \right\}$, $\sigma_0^+ = \left\{ \tmmathbf{\alpha} \in
  \mathbbm{C}^2, \alpha_1 + \alpha_2 = \frac{\pi}{2} \right\}$ and $\sigma_{-
  1}^- = \left\{ \tmmathbf{\alpha} \in \mathbbm{C}^2, \alpha_1 - \alpha_2 = -
  \frac{\pi}{2} \right\}$;
  
  \item has no other singularities in the vicinity of $\boldsymbol{\Gamma'}$. 
\end{enumerate}
The problematic part of the integrand in (\ref{eq:BA01})
involves the function
\begin{eqnarray*}
  \hat{K} (\alpha_1, \alpha_2) \hat{W} (\alpha_1, \alpha_2) & = &
  \frac{\hat{W} (\alpha_1, \alpha_2)}{k \sqrt{1 - \sin^2 (\alpha_1) - \sin^2
  (\alpha_2)}} \cdot
\end{eqnarray*}
Since this function has some singularities on $\boldsymbol{\Gamma'}$, it is
important to analyse the bypass symbols associated to the manifold
$\boldsymbol{\Gamma}$ when it is very close to the square $\boldsymbol{\Gamma'}$. More
precisely, we should specify the bypass symbols using the bridge and arrow
notation for the complexified lines $\sigma_0^{\pm}$ and $\sigma_{- 1}^{\pm}$.
Using first a wavenumber $k$ with a small positive imaginary part, studying the position of resulting singularities close to $\boldsymbol{\Gamma'}$ and letting Im$[k]\rightarrow 0$, one can show that the symbols described in figure~\ref{fig04} should be used\RED{; this is a matching bypass}.

\begin{figure}[h]
\centering  \includegraphics[width=0.45\textwidth]{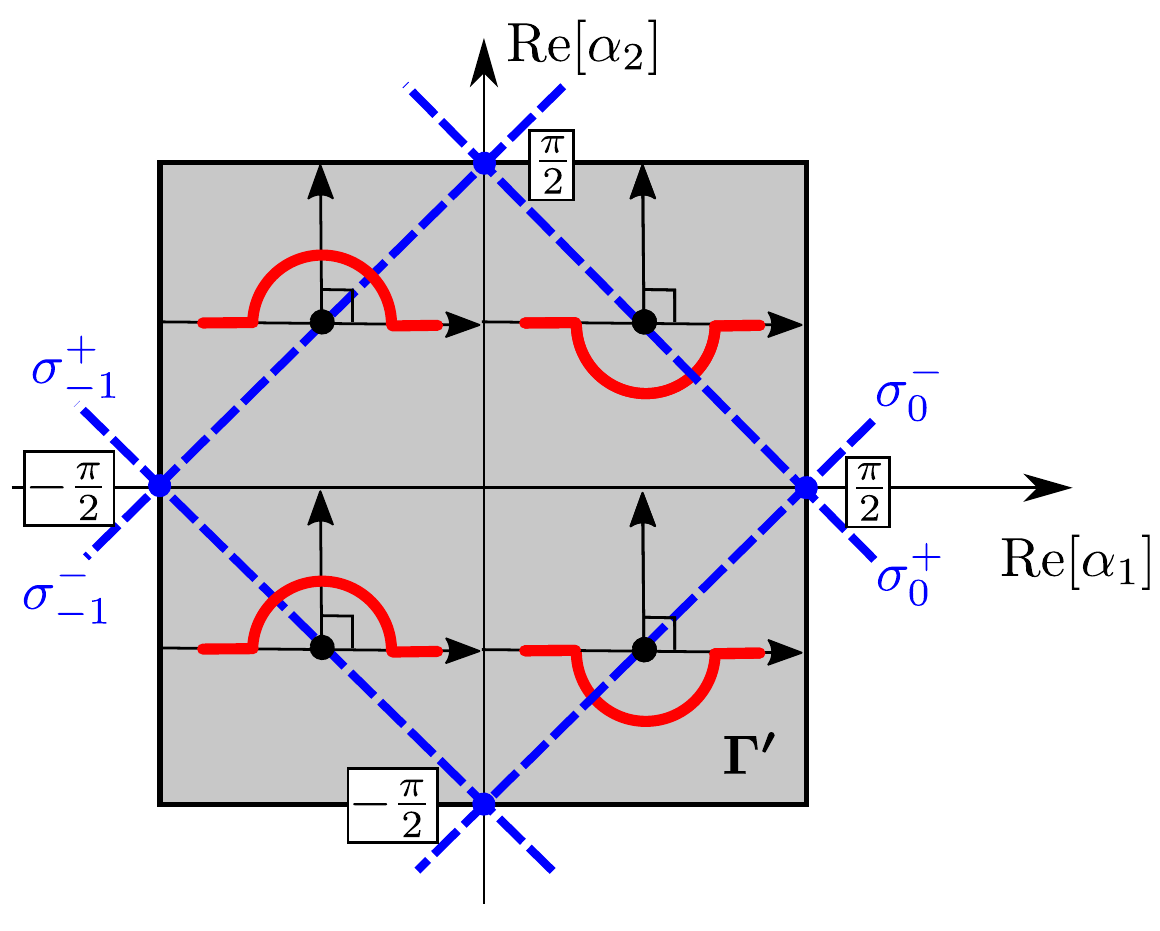}\quad\includegraphics[width=0.45\textwidth]{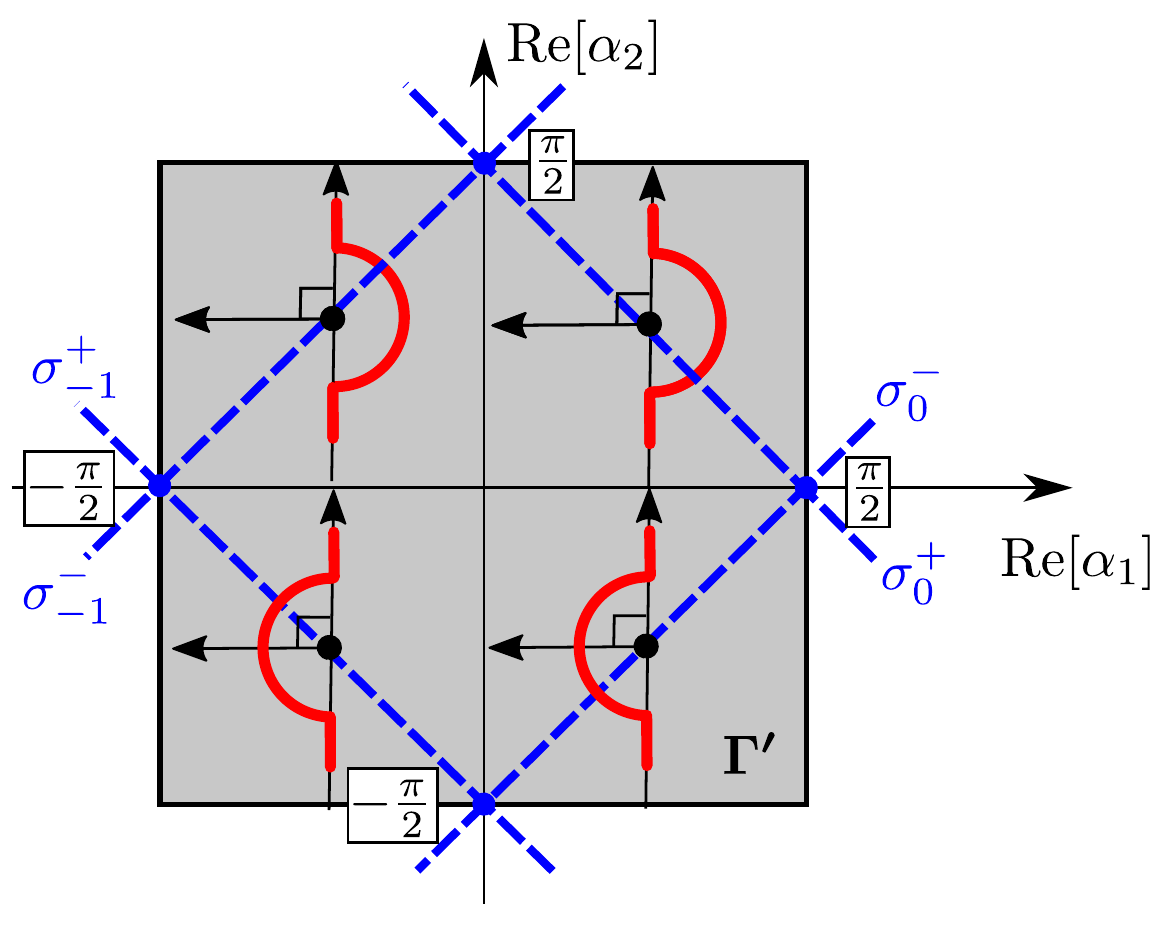}
  \caption{Bypass symbols associated with $\boldsymbol{\Gamma}$ in the vicinity of
  the real square $\boldsymbol{\Gamma'}$}
\label{fig04}
\end{figure}

\section{Proofs of theorems related to ODEs}
\label{app:stencil2ode}

\subsection{Proof of Theorem~\ref{th:stencil2ODE}}\label{app:C1}

Since $T$ has a square root behaviour at $\beta = 5 \pi / 2$, it is clear that this point is a branch point of $T$. 
Let us start by changing the path connecting the reference points in figure~\ref{fig:pathbetaplane} as it is shown in figure~\ref{fig0203}.  Since the branch point $\beta = 5 \pi/2$ is bypassed in a different way now, one should change the sign of the corresponding term of the stencil equation. The new version of the stencil equation reads:   
  \begin{equation}
    T (\beta + 4 \pi)   = T (\beta) - \lambda T (\beta + 2 \pi). 
    \label{eq7034}
  \end{equation}
Initially, we assume that this equation is valid for $\tmop{Re} [\beta] = - \pi$, and then we continue this equation onto the whole complex plane. 

 \begin{figure}[h]
\centering    \includegraphics[width=0.4\textwidth]{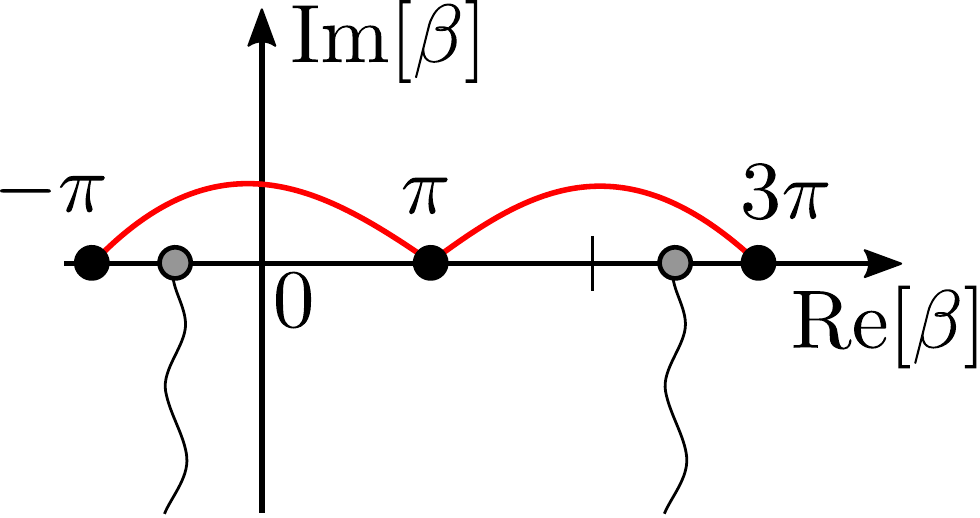}
    \caption{New connections of the points for the stencil equation
    (\ref{eq7034})}
\label{fig0203}
  \end{figure}

The aim of the change of path and, respectively, of the form of the 
stencil equation, is to connect the points $(\beta, \beta + 2\pi, 
\beta+4\pi)$ in the simplest way at least in the upper half-plane.  Namely, 
the function $T(\beta)$ is holomorphic in the half strip $-\pi<{\rm Re}[\beta]<3\pi$,
${\rm Im}[\beta] > 0$, and the points $(\beta, \beta + 2\pi, 
\beta+4\pi)$ for ${\rm Re}[\beta] = -\pi$ are just points obtained from one another by simple translations.    
  
Let us now define the function $R$ by   
\begin{equation}
  R (\beta) = T (\beta + 2 \pi).
  \label{eq7034aa}
\end{equation}  
Here we will attempt to find some functions $g(\beta)$, $f(\beta)$ such that the two equations    
\begin{equation}
\left[
\frac{d^2 }{d \beta^2} + f(\beta) \frac{d }{d \beta} + g(\beta)
\right]
 T(\beta) = 0,
 \qquad
\left[
\frac{d^2 }{d \beta^2} + f(\beta) \frac{d }{d \beta} + g(\beta)
\right]
 R(\beta) = 0
\label{eq7034aa1}
\end{equation}
are fulfilled. Note that the first equation is 
(\ref{eq7033}), thus, if the coefficients $f$ and $g$ posses the necessary properties, 
the statement of the theorem is proven. 
  
The coefficients $f$ and $g$ can be found formally by solving 
(\ref{eq7034aa1}) as a system of two linear algebraic equations: 
\begin{eqnarray}
    f (\beta) = - \frac{D_{0, 2}}{D_{0, 1}} & \tmop{and} & g (\beta) =
    \frac{D_{1, 2}}{D_{0, 1}} ,  \label{eq7034a}
 \end{eqnarray} 
where $D_{m,n}(\beta)$ are the generalised Wronsky determinants defined by
 \begin{eqnarray}
    D_{m, n} (\beta) & = & \left| \begin{array}{cc}
      T^{(m)} (\beta) & T^{(n)} (\beta)\\
      R^{(m)} (\beta) & R^{(n)} (\beta)
    \end{array} \right|,  \label{eq7035}
  \end{eqnarray}
  where $m$ and $n$ are non-negative integers and the subscript $^{(m)}$ say,
  corresponds to the $m^{\text{th}}$ derivative of a given function with respect to~$\beta$. 
  The determinants are defined in the strip $-\pi<{\rm Re}[\beta]<\pi$. 
  They are continuous on the edges of this strip, and are branching at the 
  points $\beta = \pm \pi / 2$. According to the properties of 
  $T$ and $R$ at $\beta = \pm \pi / 2$, each Wronsky determinant changes 
  its sign when a point $\beta = \pm \pi / 2$ is encircled.

Using the definition of $R$, is is possible to rewrite the stencil equation (\ref{eq7034}) in the vectorial form
  \begin{eqnarray}
    \left( \begin{array}{c}
      T (\beta + 2 \pi)\\
      R (\beta + 2 \pi)
    \end{array} \right) & = & \left( \begin{array}{cc}
      0 & 1\\
      1 & - \lambda
    \end{array} \right) \left( \begin{array}{c}
      T (\beta)\\
      R (\beta)
    \end{array} \right),  \label{eq7036}
  \end{eqnarray}
  for $\tmop{Re} [\beta] = - \pi$. By direct differentiation, the same relation is valid for any
  derivative of the vector~$\begin{psmallmatrix}T\\R\end{psmallmatrix}$, and hence, for any non-negative integers $m$ and
  $n$, we have
  \begin{eqnarray}
    D_{m, n} (\beta + 2 \pi) & = & \left| \begin{array}{cc}
      0 & 1\\
      1 & - \lambda
    \end{array} \right| D_{m, n} (\beta) = - D_{m, n} (\beta) . 
    \label{eq:interraphdetDmn}
  \end{eqnarray}
  This relation enables one to perform the analytic continuation of 
  $D_{m,n} (\beta)$ outside the strip~$-\pi < \beta < \pi$.

Because $(\beta+\pi/2)^{-1/2}T(\beta)$ and $(\beta-\pi/2)^{-1/2}R(\beta)$ are analytic on the strip $-\pi\leq\text{Re}[\beta]\leq\pi$, and because $\cos(\beta)$ has simple zeros at $\beta=\pm\pi/2$, one can show, by direct differentiation or otherwise, that the function
\begin{align*}
P_{m,n}(\beta)&=(\cos(\beta))^{\max(m,n)} D_{m,n}(\beta)/\sqrt{\cos(\beta)}
\end{align*}
is analytic on the strip and $P_{m,n}(\pm\pi/2)\neq 0$. Since $\cos(\beta)$ is $2\pi$-periodic, (\ref{eq:interraphdetDmn}) implies that $P_{m,n}(2\pi+\beta)=-P_{m,n}(\beta)$. Hence the function $\tilde{P}_{m,n}(\beta)=\sin(\beta/2)P_{m,n}(\beta)$ is analytic on the strip and satisfies $\tilde{P}_{m,n}(\beta+2\pi)=\tilde{P}_{m,n}(\beta)$.

The mapping $\mathfrak{r} = e^{i \beta}$ maps the strip $-\pi\leq\beta\leq\pi$ to the entire $\mathfrak{r}$ plane, the left (resp. right) boundary of the $\beta$ strip is sent to the bottom (resp. top) part of the negative real axis in the $\mathfrak{r}$ plane. We can hence consider the function $\tilde{p}_{m,n}(\mathfrak{r})=\tilde{P}_{m,n}(\beta(\mathfrak{r}))$. It is clearly analytic everywhere in the $\mathfrak{r}$ plane, with the possible exception of the negative real axis that can possibly be a branch cut. However, since $\tilde{P}_{m,n}(\beta)$ is $2\pi$-periodic, its value on the right and left boundary is the same, and hence $\tilde{p}_{m,n}(\mathfrak{r})$ is continuous across the negative real axis, so there is no cut there and $\mathfrak{r}=0$ is hence not a branch point (but can still be a pole).

Moreover, the exponential growth\footnote{To be rigorous here, one should impose the growth restriction of the type chosen not only on the function $\hat W$ but also on all its derivatives.} of $R$ and $T$ ensures that $D_{m,n}$ also has an exponential growth at infinity, and hence, it is clear that $\tilde{P}_{m,n}(\beta(\mathfrak{r}))$ also has exponential growth. Noting that under the same $\mathfrak{r}$ mapping, $\beta=+i\infty$ (resp. $\beta=-i\infty$) is sent to $0$ (resp. $\infty$). We conclude that $\tilde{p}_{m,n}(\mathfrak{r})$ grows no faster than a power of $\tau$ at $\infty$ and $0$. Since $\mathfrak{r}=0$ is not a branch point, $\tilde{p}_{m,n}(\mathfrak{r})$ should grow like $1/|\mathfrak{r}|^{\mathfrak{m}}$ for an integer power $\mathfrak{m}$ say. 

Hence the function $\mathfrak{r}^\mathfrak{m}\tilde{p}_{m,n}(\mathfrak{r})$ is entire and has power growth at infinity. According to the extended Liouville theorem, this has to be a polynomial. And hence $\tilde{p}_{m,n}(\mathfrak{r})$ is a rational function. Finally, we can conclude that the ratio of two generalised Wronsky determinants is a rational function. More precisely, we can show that
\begin{align*}
f(\beta)&=-\frac{D_{0,2}(\beta)}{D_{0,1}(\beta)}=\frac{-1}{\cos(\beta)} \frac{\tilde{P}_{0,2}(\beta)}{\tilde{P}_{0,1}(\beta)}=\frac{-2}{\mathfrak{r}+1/\mathfrak{r}} \frac{\tilde{p}_{0,2}(\mathfrak{r})}{\tilde{p}_{0,1}(\mathfrak{r})}, \\
g(\beta)&=\frac{D_{1,2}(\beta)}{D_{0,1}(\beta)}=\frac{1}{\cos(\beta)} \frac{\tilde{P}_{1,2}(\beta)}{\tilde{P}_{0,1}(\beta)}=\frac{2}{\mathfrak{r}+1/\mathfrak{r}} \frac{\tilde{p}_{1,2}(\mathfrak{r})}{\tilde{p}_{0,1}(\mathfrak{r})},
\end{align*}
are rational functions of $\mathfrak{r}$. The boundedness  of the coefficients $f$ and $g$ follows from the 
  exponential behaviour of the solutions.

The reasoning above has been based on the fact that the Wronsky determinant $D_{0,1}$ is not identically equal to zero. One can see that this happens if and only if 
$T$ obeys a linear homogeneous ODE of the first order with a $2\pi$-periodic coefficient. Indeed, if $T$ obeys such ODE, then so does $R$, and then $D_{0,1}=0$. If $D_{0,1}=0$, then $T$ obeys the ODE 
\begin{align}
T'-\frac{R'}{R} T=0, \label{eq:1storderODE}
\end{align}
which is a homogeneous 1st order linear ODE with coefficient $F(\beta)=-\tfrac{R'(\beta)}{R(\beta))}$. Using the 1D stencil equation, this coefficient can be shown to be $2\pi$-periodic:
\begin{align}
F(\beta+2\pi)=-\frac{R'(\beta+2\pi)}{R(\beta+2\pi)}\underset{\left( \ref{eq7036} \right)}{=}-\frac{T'(\beta)-\lambda R'(\beta)}{T(\beta)-\lambda R(\beta)}\underset{\left( \ref{eq:1storderODE} \right)}{=}-\frac{\frac{R'(\beta)}{R(\beta)}T(\beta)-\lambda R'(\beta)}{T(\beta)-\lambda R(\beta)}=F(\beta).
\end{align}
This cannot happens since in this case, we would get $T (2 \pi + \beta)  =  \Upsilon \, T (\beta)$ for some constant $\Upsilon$, and this cannot be true because of the singularity structure of~$T$.

Let us finish by proving the symmetry relations (\ref{eq7033a}). The condition 
(\ref{eq7032b}) implies that $R(\beta) = T(-\beta)$, and hence (\ref{eq7033a}) follows from 
(\ref{eq7034a}) and (\ref{eq7035}). {\link{\qedsymbol}}

\subsection{Proof of Proposition~\ref{prop:minimal}} \label{appB_2}

According to the periodicity of $f$ and $g$, their symmetry properties 
(\ref{eq7033a}), the position of (regular)
singular points, and the fact that no other singularities can occur due to the equation being minimal, the coefficients $f$ and $g$ can be rewritten as 
\begin{align}
f(\beta) &= \frac{1}{\cos \beta} \sum_{n=1}^{N_f} a^{(f)}_n \sin(n \beta)=\frac{\sin \beta}{\cos \beta}\sum_{n=1}^{N_f} a^{(f)}_n U_{n-1}(\cos \beta),\label{eq:Un}\\
\qquad 
g(\beta) &= \frac{1}{\cos \beta} \sum_{n=0}^{N_g} a^{(g)}_n \cos(n \beta)=\frac{1}{\cos \beta} \sum_{n=0}^{N_g} a^{(g)}_n T_n( \cos \beta),
\label{eq7054bb}
\end{align}
for some positive integers $N_{f,g}$ and constants $a_n^{(f,g)}$, where $T_n$ and $U_n$ are the Chebyshev polynomials of first and second kind respectively. Since $f$ and $g$ must remain bounded away from $\beta=\pm \pi/2$, (\ref{eq:Un}) and (\ref{eq7054bb}) have to take the form

\begin{align}
f(\beta) &= \frac{c \sin \beta}{\cos \beta},
&
g(\beta) &= \frac{a + b \cos \beta}{\cos \beta},
\label{eq7054bb2}
\end{align}
for some constants $a$, $b$ and $c$.
For the resulting equation (\ref{eq7033}) to have exponents $(0, 1/2)$ at the point $\beta = \pi/2$ (i.e.\ it will have a regular solution and a solution which is a product of a regular function and 
$\sqrt{\cos \beta}$ near this point), it is necessary that $c = -1/2$ (see e.g.\ \cite{Olver}). {\link{\qedsymbol}}

\subsection{Proof of Proposition~\ref{prop:monodromy}}
\label{appB_3}
Let us pick $T(\beta) = \psi_{1,2} (\beta-\pi/2)$ and prove that it obeys all conditions of Proposition~\ref{prop:stencilsepvar}. Let us start by proving that $T$ obeys the symmetry condition~\ref{hyp:4th9}. 
For this, consider the function $T^{\S} (\beta) \equiv T(2\pi - \beta)$. 
The function $T^{\S} (\beta)$ obeys equation (\ref{eq7033min}) due to the 
symmetry of the coefficients of (\ref{eq7033min}). It is regular 
at the point $\beta = \pi/2$ because $\mathsf{n}_{1,2}=0$, thus, its expansion in the basis $B_{\pi/2}$
does not contain the branching term. Thus, $T^{\S}(\beta) = \mathcal{Q} T(\beta)$
for some constant $\mathcal{Q}$. Finally, taking $\beta = \pi$ we 
prove that $\mathcal{Q} = 1$. 

Because of this symmetry, the condition~\ref{hyp:1th9} is hence obeyed by construction. The condition~\ref{hyp:3th9} follows from the fact that $0$ and 
$\infty$ are regular singular points of (\ref{eq:fuchsian1}).

Let us finally prove that condition~\ref{hyp:2th9} of Proposition~\ref{prop:stencilsepvar}
is satisfied.
For this, we consider $R(\beta) = T (\beta + 2\pi)$ and analyse the behaviour of the 
functions $T(\beta)$ and $R(\beta)$ as $\beta \to i \infty$ (i.e.\ in the 
upper half-plane far from the real axis). On the one hand, 
due to the $2\pi$-periodicity of the coefficients of (\ref{eq7033}) and 
by the construction of $R$, $R$ and $T$ are two independent solutions to our ODE, and $R(\beta+2\pi)$ is also a solution. Hence $R(\beta+2\pi)$ is a linear combination of $R$ and $T$, and we can write
\begin{equation}
\left( \begin{array}{c}
T(\beta + 2\pi) \\
R(\beta + 2\pi)
\end{array}\right)
=
\left( \begin{array}{cc}
0 & 1 \\
s & -\lambda 
\end{array}\right) 
\left( \begin{array}{c}
T(\beta) \\
R(\beta)
\end{array}\right)
\label{eq:proofs1}
\end{equation} 
for some constants $s$ and $\lambda$. The condition~\ref{hyp:2th9} 
will be proven if we establish that  $s = 1$, i.e\ we want (\ref{eq7034}) to be satisfied. 

On the other hand, according to the theory of Fuchsian ODEs, 
in the upper half-plane the space of the solutions of (\ref{eq7033})
has a basis, whose terms  
can be expressed as formal series
\begin{equation}
F(\beta) = e^{\kappa \beta} \left(1 + \sum_{l = 1}^{\infty}
p_l e^{i \beta l} \right)
\label{eq:proofs6}
\end{equation}
for some $p_l$. 
The parameter $\kappa$ can be defined from a 
{\em characteristic equation\/} obtained from (\ref{eq7033})
by taking the leading terms of the coefficients in the upper half-plane as $\beta\to+i\infty$: 
\begin{equation}
\kappa^2 - \frac{i}{2} \kappa + b = 0.
\label{eq:proofs2}
\end{equation}
The roots of this equation are as follows:
\begin{align}
\kappa_{1,2} &= \frac{i}{4} \pm i \sqrt{b+1/16}, & \kappa_1+\kappa_2=i/2  . 
\label{eq:proofs3}
\end{align}
The two roots of this equation correspond to two 
functions $F_{1,2} (\beta)$ forming the solution basis. 

Note that, for (\ref{eq7034}) to be satisfied, 
the values $\kappa$ should also obey the equation
\begin{equation}
e^{4 \pi \kappa} + \lambda e^{2 \pi \kappa} - 1 = 0.
\label{eq:proofs5}
\end{equation}
Note that this equation defines the values $\kappa$ only up to 
a term equal to $2\pi i n$. 
Comparing (\ref{eq:proofs3}) and (\ref{eq:proofs5})
obtain
\begin{equation}
\lambda = - 2 i \cos(2\pi \sqrt{b + 1/16}). 
\label{eq:proofs5a}
\end{equation}
Due to (\ref{eq:proofs6}), the transformation matrix for the basis $F_{1,2}$ is obviously diagonal:
\begin{equation}
\left( \begin{array}{c}
F_1(\beta + 2\pi) \\
F_2(\beta + 2\pi)
\end{array}\right)
=
\left( \begin{array}{cc}
e^{2\pi \kappa_1} & 0 \\
0  & e^{2\pi \kappa_2}
\end{array}\right) 
\left( \begin{array}{c}
F_1(\beta) \\
F_2(\beta)
\end{array}\right). 
\label{eq:proofs4}
\end{equation} 
Since the vector of unknowns in (\ref{eq:proofs4}) is a linear transformation of 
the vector in (\ref{eq:proofs1}), the determinants of the 
transformation matrices should be equal:
\[
{\rm det}\left[ \left( \begin{array}{cc}
0 & 1 \\
s & -\lambda 
\end{array}\right)\right] 
=
{\rm det}\left[ \left( \begin{array}{cc}
e^{2\pi \kappa_1} & 0 \\
0  & e^{2\pi \kappa_2}
\end{array}\right)\right],
\]
leading to 
\[
-s = \exp \{ 2\pi (\kappa_1 + \kappa_2) \} = \exp\{\pi i \} = -1 .
\]
Thus, $s = 1$, equation (\ref{eq7036}) is valid for some $\lambda$ (given by (\ref{eq:proofs5a})), and
(\ref{eq7034}) is valid by construction of $R$. Finally, for the connection contours shown in figure~\ref{fig:pathbetaplane} the functional equation of condition~\ref{hyp:2th9} is valid. {\link{\qedsymbol}}

\end{document}